\newtheorem{Th}{Theorem}[section]
\newtheorem{Cor}[Th]{Corollary}
\newtheorem{Lem}[Th]{Lemma}
\newtheorem{Prop}[Th]{Proposition}
\newtheorem{Res}[Th]{Result}
\theoremstyle{definition}
\newtheorem{Def}[Th]{Definition}
\newtheorem{Not}[Th]{Note}
\newtheorem{Rem}[Th]{Remark}
\newtheorem{Ex}[Th]{Example}
\theoremstyle{plain}
\newcommand{\R}{\mathbb R}
\newcommand{\N}{\mathbb N}
\newcommand{\K}{\mathbb K}
\newcommand{\X}{\mathcal X}
\newcommand{\com}[2]{\mathscr C_{#2}(#1)}
\newcommand{\norm}{\mathcal{N}(\X)}
\newcommand{\notimplies}{%
	\mathrel{{\ooalign{\hidewidth$\not\phantom{=}$\hidewidth\cr$\implies$}}}}
\title{Comparability of Metrics and Norms in terms of Basis of Exponential Vector Space}
\author{Dhruba Prakash Biswas\footnote{Department of Pure Mathematics, University of Calcutta, 35, Ballygunge Circular Road, Kolkata-700019, India, e-mail : dhrubaprakash28@gmail.com}, Priti Sharma\footnote{Bangabasi College, 19, Rajkumar Chakraborty Sarani, Kolkata - 700009, India, e-mail : mspriti23@gmail.com}, Sandip Jana\footnote{Department of Pure Mathematics, University of Calcutta, 35, Ballygunge Circular Road, Kolkata-700019, India, e-mail : sjpm@caluniv.ac.in} \& Jens Schwaiger\footnote{Institut für Mathematik, Karl-Franzens-Universität Graz, Austria, e-mail : jens.schwaiger@uni-graz.at} } 
\date{}
\begin{document}
	
\maketitle
	
\begin{abstract}
	
In this paper, we shall compare two metrics in terms of `\textit{orderly dependence}’, a notion developed in exponential vector space in the article `\textit{Basis and Dimension of Exponential Vector Space}' by  Jayeeta Saha and Sandip Jana in \textit{Transactions of A. Razmadze Mathematical Institute} Vol. 175 (2021), issue 1, 101-115. Exponential vector space (in short `evs') is a partially ordered space associated with a commutative semigroup structure and a compatible scalar multiplication. In the present paper we shall show that the collection $ \mathcal{D(\mathbf X)} $ of all metrics on a non-empty set $ \mathbf X $, together with the constant function zero `$ O $', forms a topological exponential
vector space. We shall discuss the orderly dependence of two metrics through our findings of a basis of $ \mathcal{D(\mathbf X)}\smallsetminus\{O\} $ in different scenario. We shall  characterise `\textit{orderly independence}' of two elements of a topological evs
in terms of the `\textit{comparing function}', another mechanism developed in topological exponential vector space, which can measure the degree of comparability of two elements of an evs. Finally, we shall discuss existence of orderly independent norms on a linear space. For an infinite dimensional
linear space we shall construct a large number of orderly independent norms depending on the dimension of the linear space. Orderly independent norms are precisely those which are `\textit{totally non-equivalent}', in the sense that they produce incomparable topologies.
	\end{abstract}
	
AMS Classification : 46A99, 46B99, 06F99.
	
Key words :  Exponential vector space, orderly independence, basis and dimension of evs, comparing function, totally non-equivalent norm.
	
\section{Introduction}

Two metrics on a set are said to be \textit{equivalent} if they produce the same topology. In the present paper we shall compare two metrics in terms of basis of \textit{exponential vector space}. The study of exponential vector space was initiated by S. Ganguly, S. Mitra and S. Jana in the paper \cite{C(X)} with the name `\textit{quasi vector space}'. Later the term `\textit{exponential vector space}' was coined by Priti Sharma and Sandip Jana in \cite{evs}. Let us first present the definition of exponential vector space.

		\begin{Def}\cite{evs}
		Let $(X,\leq)$ be a partially ordered set, `$+$' be a binary operation on 
		$X$ [called \emph{addition}] and `$\cdot$'$:K\times X\longrightarrow X$ be another composition [called \emph{scalar multiplication}, $K$ being a field]. If the operations and the partial order satisfy the following axioms then $(X,+,\cdot,\leq)$ is called an \emph{exponential vector space} (in short \emph{evs}) over $K$ .
		
		$A_1:$ $(X,+)\text{ is a commutative semigroup with identity } \theta$

		$A_2:$ $x\leq y\, (x,y\in X)\Rightarrow x+z\leq y+z  \text{ and } \alpha\cdot 
		x\leq
		\alpha\cdot y, \forall\, z\in X,  \forall\, \alpha\in K$

		$A_3:$ $\forall\,x,y\in X,\  \forall\,\alpha,\beta\in K$ 
		
		\hspace*{0.80cm}(i) $\alpha\cdot(x+y)=\alpha\cdot x+\alpha\cdot y$

		\hspace*{0.80cm}(ii)  $\alpha\cdot(\beta\cdot x)=(\alpha\beta)\cdot x$

		\hspace*{0.80cm}(iii) $(\alpha+\beta)\cdot x \leq \alpha\cdot x+\beta \cdot x$

		\hspace*{0.80cm}(iv) $1\cdot x=x,\text{ where `1' is the multiplicative identity in }K$

		$A_4:$ $\alpha\cdot x=\theta \text{ iff }\alpha=0\text{ or }x=\theta$

		$A_5:$ $x+(-1)\cdot x=\theta\text{ iff } x\in X_0:=\big\{z\in X:\ y\not\leq z, \forall\,y \in X\smallsetminus\{z\}\big \}$

		$A_6:$ For each $x \in X, \exists\,p\in X_0\text{ such that }p\leq x$. \label{d:evs}
	\end{Def}
	
	In the above definition, $X_0$ is precisely the set of all minimal elements of the evs $X$ with respect to the partial order on $X$ and it forms the maximum vector space (within $X$) over the same field as that of $X$ (\cite{C(X)}). This vector space $X_0$ is called the `\emph{primitive space}' or `\emph{zero space}' of $X$ and the elements of $X_0$ are called the `\emph{primitive elements}' [\cite{evs}]. 
	
	Thus every evs contains a vector space and conversely, given any vector space $V$, an evs $X$ can be constructed such that $V$ is isomorphic to $X_0$ \cite{evs}. In this sense,``exponential vector space'' can be considered as an algebraic ordered extension of vector space. The axiom $ A_3 $(iii) expresses very rapid growth of the non-primitive elements, since $x\leq \frac {1}{2} x +\frac{1}{2}x$, $\forall x \not \in \thinspace X_0$; whereas axiom $ A_6 $ demonstrates `\textit{positivity}' of all elements with respect to primitive elements. This justifies the nomenclature `exponential vector space'.
	
\noindent\textbf{Notation:} For any set $ A\subseteq X,\ X $ being an evs, we shall use the following notations:\\ \centerline{$\uparrow A:=\{x\in X: x\geq a\text{ for some }a\in A\}$ and $\downarrow A:=\{x\in X: x\leq a\text{ for some }a\in A\}$.}

The following concepts will be useful in the sequel.

	\begin{Def}{\cite{Nach}}
		Let `$\leq$' be a preorder in a topological space $Z$; the preorder is said to be \textit{closed} if its graph $G_{\leq}(Z):=\{(x,y)\in Z\times Z: x\leq y\}$ is closed in $Z\times Z$ (endowed with the product topology).
	\end{Def}
	
	\begin{Th}{\em{\cite{Nach}}}
		A partial order `$\leq$' in a topological space $Z$ will be a closed order iff for any $x,y\in Z$ with $x\not\leq y$, $\exists$ open neighbourhoods $U,V$ of $x,y$ respectively in $Z$ such that $(\uparrow U)\cap(\downarrow V)=\emptyset$.\label{t:partcl}
	\end{Th}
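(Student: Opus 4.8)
The plan is to argue directly from the definitions: the order $\leq$ on $Z$ is closed exactly when the complement $W:=(Z\times Z)\setminus G_{\leq}(Z)$ is open in the product topology, and a set is open in the product topology iff it is a union of ``rectangles'' $U\times V$ with $U,V$ open in $Z$. So in both directions the task reduces to passing between the assertions ``$(U\times V)\cap G_{\leq}(Z)=\emptyset$'' and ``$(\uparrow U)\cap(\downarrow V)=\emptyset$'', and the bridge between them is transitivity of $\leq$ together with the trivial observation that $a\in\,\uparrow\!\{a\}$ and $a\in\,\downarrow\!\{a\}$ for every $a\in Z$.

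For the necessity part, I would assume $G_{\leq}(Z)$ is closed and fix $x,y\in Z$ with $x\not\leq y$, so that $(x,y)\in W$. Since $W$ is open, I choose open neighbourhoods $U$ of $x$ and $V$ of $y$ with $(U\times V)\cap G_{\leq}(Z)=\emptyset$, i.e.\ $u\not\leq v$ for all $u\in U$ and $v\in V$. Then $(\uparrow U)\cap(\downarrow V)=\emptyset$: if some $z$ lay in this intersection, there would be $u\in U$ with $u\leq z$ and $v\in V$ with $z\leq v$, and transitivity would force $u\leq v$, contradicting the choice of $U,V$.

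For the sufficiency part, I would assume the neighbourhood condition and show $W$ is open. Fix $(x,y)\in W$, i.e.\ $x\not\leq y$, and take open neighbourhoods $U$ of $x$, $V$ of $y$ with $(\uparrow U)\cap(\downarrow V)=\emptyset$. It suffices to check $U\times V\subseteq W$, i.e.\ $(U\times V)\cap G_{\leq}(Z)=\emptyset$. If instead $u\in U$, $v\in V$ and $u\leq v$, then $u\in\,\uparrow\! U$ (since $u\geq u$ and $u\in U$) and $u\in\,\downarrow\! V$ (since $u\leq v$ with $v\in V$), so $u\in(\uparrow U)\cap(\downarrow V)$, a contradiction. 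Hence every point of $W$ has a product-open neighbourhood contained in $W$, so $W$ is open and $\leq$ is a closed order.

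I do not anticipate a genuine obstacle: the argument is short and essentially symmetric in the two directions. The only points that require care are the correct unfolding of ``closed order'' as ``open complement'', the invocation of the rectangle basis of the product topology, and keeping the order-axiom bookkeeping straight — in particular noting precisely where transitivity enters (the necessity part) and where reflexivity enters (the sufficiency part).
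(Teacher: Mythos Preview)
Your argument is correct and is exactly the standard proof of this characterisation. Note, however, that the paper does not supply its own proof of this theorem: it is quoted from Nachbin's book \cite{Nach} as a background result, so there is no in-paper proof to compare against. Your write-up would serve perfectly well as a self-contained justification.
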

	
	\begin{Def}\cite{evs}
		An exponential vector space $X$ over the field $\K$ of real or complex numbers is said to be a \textit{topological exponential vector space} if there exists a topology on $X$ with respect to which the addition, scalar multiplication are continuous and the partial order `$\leq$' is closed (Here $\K$ is equipped with the usual topology).
	\end{Def}
	\begin{Rem}
		If $X$ is a topological exponential vector space over the field $\K$ then its primitive space $X_0$ becomes a topological vector space over $\K$, since restriction of a continuous function is continuous. Moreover, the closedness of the partial order `$\leq$' in a topological exponential vector space $X$ readily implies (in view of Theorem \ref{t:partcl}) that $X$ is Hausdorff and hence $X_0$ becomes a Hausdorff topological vector space.\label{rm:topvec}
	\end{Rem}

	\begin{Ex} \cite{mor}
		Let $X:=$ [$0,\infty$)$\times V$, where $V$ is a vector space over the field $\mathbb K$ of real or complex numbers. Define operations and partial order on $X$ as follows :
		for $(r,a),(s,b) \in X $ and $\alpha \in \mathbb K $,\\
		(i) $(r,a)+(s,b) := (r+s,a+b)$\\
		(ii) $\alpha (r,a) := (\lvert\alpha\rvert
		r,\alpha a)$\\
		(iii) $(r,a)\leq (s,b)$ iff $r\leq s$ and $a = b$\\
		Then [$0,\infty$)$\times V$ becomes an exponential vector space with the primitive space \{$0$\}$\times V$ which is clearly isomorphic to $V$.
		
		In this example, if we consider $V$ as a Hausdorff topological vector space, then [$0,\infty$)$\times V$ becomes a topological exponential vector space with respect to the product topology, where [$0,\infty$) is equipped with the subspace topology inherited from the real line $\mathbb R$.
		
		Instead of $V$, if we take the trivial vector space $ \{\theta\} $ in the above example, then the resulting topological evs is [$0,\infty$)$\times\{ \theta\}$ which can be clearly identified with the half ray [$0,\infty$) of the real line. Thus, [$0,\infty$) forms a topological evs over the field $\mathbb K$.\qed\label{e:vect}
	\end{Ex}
	
	\begin{Ex}{\cite{C(X)}}
		Let $\mathscr{C}(\X)$ denote the topological hyperspace consisting of all non-empty compact subsets of a Hausdorff topological vector space $\X$ over the field $\K$ of real or complex numbers. Define addition, scalar multiplication and partial order on $\mathscr{C}(\X)$ as follows:
		
		(i) For $A,B\in\mathscr{C}(\X)$, $A+B:=$ $\big\{a+b:$ $a\in A$, $b\in B\big\}$
		
		(ii) For $A\in\mathscr{C}(\X)$ and $\alpha\in\K$, $\alpha A:=$ $\big\{\alpha a:$ $a\in A\big\}$
		
		(iii) For $A,B\in\mathscr{C}(\X)$, $A\leq B\iff$ $A\subseteq B$
		
		Then $\mathscr{C}(\X)$ becomes an evs over the field $\K$. The primitive space is given by  $[\mathscr{C}(\X)]_{0}$ $=\big\{\{x\}:$ $x\in \X\big\}$.	Moreover, $\mathscr{C}(\X)$ forms a topological evs with respect to the Vietoris topology \cite{Mi}. An arbitrary basic open set in this topology is of the form $V_0^{+}\cap V_{1}^{-}\cap V_{2}^{-}\cap...\cap V_{m}^{-}$, where  $V_0,V_1,...,V_m$ are open in $\X$ with $V_i\subseteq V_0$ for all $i=1,2,...,m$. Here $ V^+:=\big\{A\in\mathscr{C}(\X):A\subseteq V\big\} $, $ V^-:=\big\{A\in\mathscr{C}(\X):A\cap V\neq\emptyset\big\} $, for any $ V\subseteq\X $.\qed\label{ex:vietor}
	\end{Ex}

	The algebraic structure of $\com{\X}{}$ was the primary motivation for the study of exponential vector space. Later a large number of examples were found in various discipline of Mathematics where the hyperspace structure is well preserved (see \cite{bal}, \cite{basis}, \cite{JTh}, \cite{norm}, \cite{spri}, \cite{mor}, \cite{qvs}, \cite{evs}, \cite{PTh}).  
	
	In the present paper we shall show first that the collection $\mathcal{D}(\mathbf{X})$ of all metrics on a non-empty set $\mathbf{X}$, together with the constant function zero `$O$', forms a topological exponential vector space with respect to suitably defined operations, partial order and topology of pointwise convergence. We shall show that $\mathcal{D}(\R^2)$ is an incomplete uniform space. 
	
	In section 4, we shall compare two metrics in terms of ‘\textit{orderly dependence}’, a notion developed in exponential vector space in the article  `\textit{Basis and Dimension of Exponential Vector Space}' \cite{basis}. We shall discuss the existence of basis of $ \mathcal{D(\mathbf X)} $ in different scenario. 
	
	Section 5 deals with characterisation of `\textit{orderly independence}' of two elements of a topological evs in terms of the `\textit{comparing function}', another mechanism developed in topological exponential vector space in the paper \cite{bal}, which can measure the degree of comparability of two elements of a topological evs.
	
	In the last section, we shall use the techniques developed in the previous sections to	investigate the comparability of two norms on a linear space. We shall show that the existence of \textit{orderly independent} norms depends on the dimension of the linear space. For an infinite dimensional linear space we shall construct a large number of orderly independent norms depending on the dimension of the linear space. Orderly independent norms are precisely those which are `\textit{totally non-equivalent}', in the sense that they produce incomparable topologies.

\section{Prerequisites}
	
\begin{Def} \cite{spri} A subset $Y$ of an exponential vector space $X$ is said to be a \emph{sub exponential vector space} 
	(\emph{subevs} in short) if $Y$ itself is an exponential vector space with respect to the compositions of 
	$X$ being restricted to $Y$.
	\label{d:subevs}\end{Def} 
	
		\begin{Def}\cite{mor}
		A map $\phi:X\to Y$ ($X,Y$ being two exponential vector spaces over the field $K$) is called an \textit{order-morphism} if
		
		(i) $f(x+y)=f(x)+f(y)$, $\forall\, x,y\in X$
		
		(ii) $f(\alpha x)=\alpha f(x)$, $\forall\alpha\in K$, $\forall\, x\in X$
		
		(iii) $x\leq y\ (x,y\in X)$ $\implies f(x)\leq f(y)$
		
		(iv) $p\leq q\ \big(p,q\in f(X)\big)$ $\implies f^{-1}(p)\subseteq\downarrow f^{-1}(q)$ and $f^{-1}(q)\subseteq\uparrow f^{-1}(p)$.
		
		Further, if $f$ is bijective (injective, surjective) order-morphism, then it is called \textit{order-isomorphism} (\textit{order-monomorphism}, \textit{order-epimorphism} respectively).
		
		If $X$ and $Y$ both are topological evs over the field $\K$, then the order-isomorphism $f:X\to Y$ is called \textit{topological order-isomorphism} if $f$ is a homeomorphism.
		\label{d:mor}\end{Def}
	
	\begin{Def} \cite{evs} A property of an evs is called an \emph{evs property} if it remains invariant under order-isomorphism.
	\end{Def}
	
	\begin{Prop}{\em\cite{evs}}
		If $f;X\longrightarrow Y$ ($X,Y$ being two exponential vector spaces over the same field $K$) is an order-morphism then $f(M):=\{f(m):m\in M\}$ is a subevs of $Y$ for any subevs $M$ of $X$.\label{p:rangesubevs}
	\end{Prop}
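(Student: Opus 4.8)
The plan is to verify directly that $N:=f(M)$, equipped with the addition, scalar multiplication and partial order inherited from $Y$, satisfies the six axioms of Definition \ref{d:evs}; the crux will be to identify the identity and the primitive space of $N$. First I would dispose of closure and $A_1$: for $p=f(m_1),q=f(m_2)\in N$ (with $m_1,m_2\in M$) and $\alpha\in K$, conditions (i)--(ii) of Definition \ref{d:mor} give $p+q=f(m_1+m_2)$ and $\alpha p=f(\alpha m_1)$, while $m_1+m_2,\alpha m_1\in M$ since $M$ is a subevs, so $N$ is closed under both operations and commutativity/associativity are inherited from $Y$. The decisive normalisation is $\theta_M=\theta_X$: the element $0\cdot m$ is the same in $X$ whether the scalar product is computed in $M$ or in $X$, and $A_4$ forces it to be $\theta_M$ in the first reading and $\theta_X$ in the second; consequently $f(\theta_M)=f(0\cdot x)=0\cdot f(x)=\theta_Y$ for any $x$, so $\theta_Y\in N$ and it acts as the identity of $N$. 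Axioms $A_2$, $A_3$ and $A_4$ then follow at once, each being an (in)equality among elements of $N\subseteq Y$ that already holds in $Y$ (for $A_4$ one also uses $\theta_N=\theta_Y$).

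The substantive step is $A_5$ and $A_6$, for which I would first pin down $N_0:=\{z\in N:\,w\not\le z\ \forall\,w\in N\setminus\{z\}\}$. Two preliminary facts: (a) $M_0=M\cap X_0$, because for $m\in M$, axiom $A_5$ applied inside $M$ and inside $X$ gives $m\in M_0\iff m+(-1)m=\theta_M=\theta_X\iff m\in X_0$; and (b) $f(X_0)\subseteq Y_0$, because for $x_0\in X_0$ one has $(-1)x_0\in X_0$ and $x_0+(-1)x_0=\theta_X$, so $f(x_0)+(-1)f(x_0)=\theta_Y$ and $A_5$ in $Y$ gives $f(x_0)\in Y_0$. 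Now, given $p=f(m)\in N$, axiom $A_6$ in $M$ yields $m_0\in M_0$ with $m_0\le m$, hence $f(m_0)\le f(m)=p$ by Definition \ref{d:mor}(iii); by (a)--(b) we have $f(m_0)\in Y_0$, so $f(m_0)$ is minimal in $Y$, a fortiori minimal in $N$, so $f(m_0)\in N_0$. This proves $A_6$ for $N$ and shows $f(M_0)\subseteq N_0$; conversely, if $p$ is minimal in $N$ then $f(m_0)\le p$ forces $f(m_0)=p$, so $N_0\subseteq f(M_0)$. Hence $N_0=f(M_0)=N\cap Y_0$, and $A_5$ for $N$ follows from $A_5$ in $Y$: $p+(-1)p=\theta_N=\theta_Y\iff p\in Y_0\iff p\in N\cap Y_0=N_0$.

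The only genuinely delicate point — and the natural place to slip — is the bookkeeping of identities and minimal elements across the three spaces $M$, $X$, $Y$: that $\theta_M=\theta_X$, that $f(\theta_X)=\theta_Y$, and that minimality within $N$ transfers to and from minimality within $Y$ (which is exactly what yields $N_0=N\cap Y_0$). Once those are secured, the remainder is mechanical; it is perhaps worth noting that condition (iv) of Definition \ref{d:mor} is not actually needed anywhere in this argument, although it is the natural tool for related statements about the behaviour of $f^{-1}$ on order intervals.
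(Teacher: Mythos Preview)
Your argument is correct. Note, however, that the paper does not supply its own proof of this proposition: it is quoted from \cite{evs} and left unproved here, so there is nothing to compare against. Your direct verification of the six evs axioms for $N=f(M)$ is sound; in particular the chain $f(M_0)\subseteq N\cap Y_0\subseteq N_0\subseteq f(M_0)$ is exactly the right way to pin down $N_0$ and thereby reduce $A_5$ for $N$ to $A_5$ in $Y$. Your closing observation that condition (iv) of Definition \ref{d:mor} plays no role is also accurate.
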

	\begin{Def}\cite{spri} In an evs $X$, the \textit{primitive of any $x\in X$} is defined as the set\\ \centerline{$P_x:=\{p\in X_0 : p\leq x\}$.} The axiom $A_6$ of the Definition \ref{d:evs} ensures that the primitive of each element of an evs is a nonempty set. Also, for any $x\in X$ and any scalar $\alpha$, $\alpha P_x=P_{\alpha x}$.
		\label{d:primitive}\end{Def}

	\begin{Def}\cite{spri} An evs $X$ is said to be a \emph{single primitive evs} if $P_x$ is a singleton set for each $x\in X.$ Also, in a single primitive evs $X$, $P_{x+y}=P_x+P_y$, $\forall\,x, y\in X$.
		
		Single primitivity is an evs property \cite{spri}.
		\label{d:sp}\end{Def}
	
	\begin{Def}
		An evs $X$ is said to be a \emph{zero primitive evs} if $P_x=\{\theta\}, \forall x\in X.$
		\label{d:zp}\end{Def}
	
	\begin{Def} \cite{JTh}
		An evs $X$ is said to be an \emph{additive primitive evs} if $P_{x+y}=P_x+P_y$,  $\forall x, y\in X.$ 
		
		Additive primitivity is an evs property \cite{JTh}.
		\label{d:ap}\end{Def}
	
	\begin{Def} \cite{bal}
		An element $x$ in an evs $X$ over $\K$ is said to be a \emph{balanced element} if $\alpha x\leq x, \forall \alpha\in \K$ with $|\alpha|\leq 1$.
		
		An evs $X$ over $\K$ is said to be a \emph{balanced evs} if each element of $X$ is a balanced element.	
		
		The property of an evs to be balanced is an evs property \cite{bal}.	
		\label{d:balanced}\end{Def}
		
	\begin{Def} \cite{JTh}
		An element $x$ in an evs $X$ over $\K$ is said to be a \emph{homogeneous element} if $\alpha x=|\alpha| x, \forall \alpha\in \K$. 
		
		An evs $X$ over $\K$ is said to be \emph{homogeneous} if each element of $X$ is a homogeneous element. A balanced evs is always homogeneous but not conversely.
		
		The property of an evs to be homogeneous is an evs property.
		\label{d:homo}\end{Def}
	
	\begin{Def} \cite{JTh}
		An element $x$ in an evs $X$ over $\K$ is said to be a \emph{convex element}\index{Convex element of an evs} if $(\alpha+\beta)x=\alpha x+\beta x, \forall \alpha, \beta \in\K$ with $\alpha, \beta \geq 0$.
		
		An evs $X$ over $\K$ is said to be \emph{convex}\index{Convex evs} if each element of $X$ is a convex element.
		
		The property of an evs to be convex is an evs property.
		\label{d:convex}\end{Def}
	
	\begin{Ex}{\cite{norm}}
	Let $\X$ be a vector space over the field $\K$ of real or complex numbers and $\mathcal{N}(\X)$ denotes the collection of all norms on $\X$ together with the zero function $O$. Define addition, scalar multiplication and partial order on $\mathcal{N}(\X)$ as follows:
	
	(i) For $f,g\in\mathcal{N}(\X)$, $(f+g)(x):= f(x) + g(x)$ for all $x\in\X$.
	
	(ii) For $f\in\mathcal{N}(\X)$ and for all $\alpha\in\K$, $(\alpha f)(x):= |\alpha| f(x)$ for all $x\in\X$.
	
	(iii) For $f,g\in\mathcal{N}(\X)$, $f\leq g\iff f(x)\leq g(x)$ for all $x\in\X$. \\
Then $\big(\mathcal{N}(\X),+,\cdot,\leq\big)$ becomes an evs over the field $\K$.

 $\norm$ is a single primitive,  convex, homogeneous and balanced  evs. Since the primitive space of $\mathcal{N}(\X)$ is $[\mathcal{N}(\X)]_{0}$ $=\{O\}$, it is a zero primitive evs. This evs becomes a topological evs if $\mathcal{N}(\X)$ is entitled with the topology of pointwise convergence. Any basic open set in $\norm$ is of the form  $\bigcap\limits_{i=1}^{n}W(x_i,U_i)$, where $ W(x_i,U_i):=\big\{f\in\norm:f(x_i)\in U_i\big\} $, $x_1,x_2,...,x_n\in\mathcal{X}$ and $U_1,U_2,...,U_n$ are open in  $[0,\infty)$.\label{e:spofnorms}
\end{Ex}

\section{Evs structure on the collection of metrics on a non-empty set}

	Let $\mathbf{X}$ be any non-empty set and $\mathcal{D}(\mathbf{X})$ denote the set of all metrics on $\mathbf{X}$ together with the constant zero function $O:X\times X\longrightarrow [0,\infty)$ defined by : $O(x,y)=0$, for all $x,y\in \mathbf{X}$. Clearly, $\mathcal{D}(\mathbf{X})\not=\emptyset$ since one can always find discrete metric on $\mathbf{X}$. Further $\mathcal{D}(\mathbf{X})$ is an uncountable set (since for any metric $\rho$ on $\mathbf{X}$ and for any two distinct positive real numbers $\alpha,\beta$, the metrics $\alpha \rho,\beta\rho$ are distinct).

	We define addition, scalar multiplication and partial order on $\mathcal{D}(\mathbf{X})$ as follows : 
	
	 for $\rho,\eta\in\mathcal{D}(\mathbf{X})$ and $\alpha\in\K$,
	
	(i) $(\rho+\eta)(x,y)$ := $\rho(x,y)+\eta(x,y)$, for all $x,y\in \mathbf{X}$. 
	
	(ii) $(\alpha \rho)(x,y):=|\alpha| \rho(x,y)$, for all $x,y\in \mathbf{X}$. 
	
	(iii) For any $\rho,\eta\in\mathcal{D}(\mathbf{X})$, $\rho\leq \eta$ $\Longleftrightarrow$ $\rho(x,y)\leq \eta(x,y)$, for all $x,y\in \mathbf{X}$. 
	
	Now we shall show that $\big(\mathcal{D}(\mathbf{X}), +, \cdot, \leq\big)$ forms an evs over the field $\K$.
	
	\begin{Th}
		$\mathcal{D}(\mathbf{X})$ is an exponential vector space over the field $\K$ of real or complex numbers with respect to the above mentioned operations and partial order.
	\end{Th}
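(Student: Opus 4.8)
The plan is to verify directly the six axioms $A_1$–$A_6$ of Definition \ref{d:evs}, after first checking that the three operations are well defined, i.e.\ that they do not leave $\mathcal{D}(\mathbf{X})$. For well-definedness: if $\rho,\eta$ are metrics then $\rho+\eta$ is again a metric (symmetry is immediate, the triangle inequality for $\rho+\eta$ is the sum of the two triangle inequalities, and $(\rho+\eta)(x,y)=0\iff\rho(x,y)=\eta(x,y)=0\iff x=y$), and $\rho+O=\rho$. For scalar multiplication, if $\alpha\neq0$ then $|\alpha|\rho$ is a metric (a positive multiple of a metric), while $0\cdot\rho=O$ and $\alpha O=O$ for every $\alpha$. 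Thus $+$ and $\cdot$ are genuine operations on $\mathcal{D}(\mathbf{X})$ and $O$ is an absorbing/identity element in the required ways.

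Next, $A_1$–$A_4$ follow by evaluating everything at an arbitrary pair $(x,y)$ and using the structure of $[0,\infty)$: $A_1$ holds because $\big([0,\infty),+\big)$ is a commutative semigroup with identity $0$, so $O$ plays the role of $\theta$; $A_2$ holds because addition and multiplication by a non-negative scalar are monotone on $[0,\infty)$; $A_3$(i),(ii),(iv) are the pointwise identities $|\alpha|(s+t)=|\alpha|s+|\alpha|t$, $|\alpha|\big(|\beta|t\big)=|\alpha\beta|t$, $|1|\,t=t$, and $A_3$(iii) is the pointwise inequality $|\alpha+\beta|\,t\le\big(|\alpha|+|\beta|\big)t$, valid because $t\ge0$; finally $A_4$ holds because $|\alpha|\rho(x,y)>0$ as soon as $\alpha\neq0$ and $\rho(x,y)>0$ for some pair $(x,y)$, the latter happening precisely when $\rho\neq O$.

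The only point needing a small separate argument is the computation of the primitive (minimal-element) set, on which $A_5$ and $A_6$ rest. I claim $\big[\mathcal{D}(\mathbf{X})\big]_0=\{O\}$. Indeed $O$ is minimal: $\eta\le O$ forces $\eta(x,y)\le O(x,y)=0$ for all $x,y$, hence $\eta=O$. Conversely no metric $\rho\neq O$ is minimal, since $\tfrac12\rho\in\mathcal{D}(\mathbf{X})$ with $\tfrac12\rho\le\rho$ and $\tfrac12\rho\neq\rho$. Now for $A_5$: because scalar multiplication uses the modulus, $(-1)\cdot\rho=|-1|\rho=\rho$, so $\rho+(-1)\cdot\rho=2\rho$, and $2\rho=O\iff\rho=O\iff\rho\in\big[\mathcal{D}(\mathbf{X})\big]_0$. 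For $A_6$: $O\le\rho$ for every $\rho\in\mathcal{D}(\mathbf{X})$ and $O\in\big[\mathcal{D}(\mathbf{X})\big]_0$, so $p=O$ works for every $\rho$. This exhausts the axioms.

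I do not anticipate a genuine obstacle: the argument is essentially pointwise bookkeeping in $[0,\infty)$, entirely parallel to the evs $\mathcal{N}(\X)$ of Example \ref{e:spofnorms} (and to the half-ray evs $[0,\infty)$ of Example \ref{e:vect}, of which $\mathcal{D}(\mathbf{X})$ may be regarded as a ``coordinatewise'' version indexed by $\mathbf{X}\times\mathbf{X}$). The two places deserving a moment's care are (i) confirming that $\mathcal{D}(\mathbf{X})$ is closed under addition — that a sum of metrics is a metric — which is the one spot where the metric axioms, rather than mere non-negativity, are actually used, and (ii) the identification $\big[\mathcal{D}(\mathbf{X})\big]_0=\{O\}$, which simultaneously settles $A_5$ and $A_6$ and shows, incidentally, that $\mathcal{D}(\mathbf{X})$ is a zero primitive evs.
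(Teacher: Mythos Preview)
Your proposal is correct and follows essentially the same route as the paper: a direct pointwise verification of $A_1$–$A_6$, with the identification $[\mathcal{D}(\mathbf{X})]_0=\{O\}$ via the $\tfrac12\rho$ argument, and the same handling of $A_5$ using $(-1)\cdot\rho=\rho$. The only cosmetic difference is that you isolate the computation of the primitive set before treating $A_5$ and $A_6$, whereas the paper folds it into the $A_5$ step.
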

	
	\begin{proof}
		Clearly addition of two metrics and scalar multiplication of a metric (as defined above) are again metrics. Also `$\leq$' is a partial order on $\mathcal{D}(\mathbf{X})$.
		
			Now we prove that $\big(\mathcal{D}(\mathbf{X}), +, \cdot, \leq\big)$ forms an evs over the field $\K$.
			
		$\bf{A_1:}$ Clearly, $(\mathcal{D}(\mathbf{X}), +)$ is a commutative semigroup with identity $O$, the constant function zero.
		
		$\bf{A_2:}$ Let $\rho,\eta\in\mathcal{D}(\mathbf{X})$ with $\rho\leq \eta$. Then $\rho(x,y)\leq \eta(x,y)$, for all $x,y\in \mathbf{X}$. For any $\zeta\in\mathcal{D}(\mathbf{X})$, $(\rho+\zeta)(x,y)=\rho(x,y)+\zeta(x,y)\leq \eta(x,y)+\zeta(x,y)=(\eta+\zeta)(x,y)$, for all $x,y\in \mathbf{X}$. This implies $\rho+\zeta\leq \eta+\zeta$, for all $\zeta\in\mathcal{D}(\mathbf{X})$.
		
		Also for any $\alpha\in\K$, $(\alpha \rho)(x,y)=|\alpha| \rho(x,y)\leq |\alpha| \eta(x,y)=(\alpha \eta)(x,y)$, for all $x,y\in \mathbf{X}$. This implies $\alpha \rho\leq \alpha \eta$, for all $\alpha\in\K$.
		
		$\bf{A_3\ (i):}$ Let $\rho,\eta\in\mathcal{D}(\mathbf{X})$ and $\alpha\in\K$. Then, $\big(\alpha(\rho+\eta)\big)(x,y)$ = $|\alpha|\big(\rho(x,y)+\eta(x,y)\big)$ = $|\alpha| \rho(x,y)+|\alpha| \eta(x,y)=(\alpha \rho+\alpha \eta)(x,y)$, for all $x,y\in \mathbf{X}$. This implies $\alpha(\rho+\eta)=\alpha \rho+\alpha \eta$, for all $\alpha\in\K$ and for all $\rho,\eta\in\mathcal{D}(\mathbf{X})$.
		
		$\bf{(ii)}$ Let $\rho\in\mathcal{D}(\mathbf{X})$ and $\alpha, \beta\in\K$. Then $\big(\alpha(\beta \rho)\big)(x,y)=|\alpha|(\beta \rho)(x,y)=|\alpha||\beta| \rho(x,y)$ = $|\alpha\beta| \rho(x,y)=\big((\alpha\beta)\rho\big)(x,y)$, for all $x,y\in \mathbf{X}$. This implies $\alpha(\beta \rho)=(\alpha\beta)\rho$, for all $\alpha,\beta\in\K$ and for all $\rho\in\mathcal{D}(\mathbf{X})$.
		
		$\bf{(iii)}$ Let $\rho\in\mathcal{D}(\mathbf{X})$ and $\alpha, \beta\in\K$. Then, $\big((\alpha+\beta)\rho\big)(x,y)$ = $|\alpha+\beta| \rho(x,y)$ $\leq$ $(|\alpha|+|\beta|)\rho(x,y)$ = $|\alpha| \rho(x,y)+|\beta| \rho(x,y)$ = $(\alpha \rho+\beta \rho)(x,y)$, for all $x,y\in \mathbf{X}$. This implies $(\alpha+\beta)\rho\leq \alpha \rho+\beta \rho$, for all $\alpha,\beta\in\K$ and for all $\rho\in\mathcal{D}(\mathbf{X})$.
		
		$\bf{(iv)}$ Clearly, $1\rho=\rho$, for all $\rho\in\mathcal{D}(\mathbf{X})$.
		
		$\bf{A_4:}$ Let $\alpha\in\K$ and $\rho\in\mathcal{D}(\mathbf{X})$. Then $\alpha \rho=O$ iff $(\alpha \rho)(x,y)=O(x,y)$, for all $x,y\in \mathbf{X}$ iff $|\alpha| \rho(x,y)=0$, for all $x,y\in \mathbf{X}$ iff $\alpha=0$ or $\rho=O$.
		
		$\bf{A_5:}$ For any $\rho\in\mathcal{D}(\mathbf{X})$, $\rho+(-1)\rho=O$ $\Longleftrightarrow$ $\big(\rho+(-1)\rho\big)(x,y)=0$, for all $x,y\in \mathbf{X}$ $\Longleftrightarrow$ $\rho(x,y)+\rho(x,y)=0$, for all $x,y\in \mathbf{X}$ $\Longleftrightarrow$  $\rho(x,y)=0$, for all $x,y\in \mathbf{X}$ i.e. $\rho=O$. Let $\rho\in\mathcal{D}(\mathbf{X})\smallsetminus\{O\}$. Then $\frac{1}{2}\rho(x,y)\leq\rho(x,y)$
		for all $x,y\in \mathbf{X}$ and $\frac{1}{2}\rho\not=\rho$ (since $\rho\not=O$). Thus $\rho\not\in[\mathcal{D}(\mathbf{X})]_{0}$. Therefore  we obtain  $[\mathcal{D}(\mathbf{X})]_0=\{O\}$. Thus $\rho+(-1)\rho=O$ $\Longleftrightarrow$ $\rho\in[\mathcal{D}(\mathbf{X})]_0$.
		
		$\bf{A_6:}$ Clearly, for each $\rho\in\mathcal{D}(\mathbf{X})$, $\rho(x,y)\geq 0$ for all $x,y\in \mathbf{X}$ i.e. $ \rho\geq O$ where, $O\in[\mathcal{D}(\mathbf{X})]_0$.
		
		Hence $\big(\mathcal{D}(\mathbf{X}),+,\cdot,\leq\big)$ forms an evs over $\K$.
		
	\end{proof}
	
	\begin{Th}
		$\mathcal{D}(\mathbf{X})$ is a single primitive, additive primitive, balanced, homogeneous and  convex evs.
		\label{t:D_pro}\end{Th}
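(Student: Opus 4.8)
The plan is to verify the five listed properties one by one, directly from the definitions of the operations on $\mathcal{D}(\mathbf{X})$, the main lever being the fact already extracted in the preceding proof that the primitive space is $[\mathcal{D}(\mathbf{X})]_0=\{O\}$.

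First I would dispose of the three ``primitivity'' conditions simultaneously. Since $[\mathcal{D}(\mathbf{X})]_0=\{O\}$ and, by axiom $A_6$, $\rho\geq O$ for every $\rho\in\mathcal{D}(\mathbf{X})$, the primitive of an arbitrary metric $\rho$ is $P_\rho=\{p\in[\mathcal{D}(\mathbf{X})]_0:p\leq\rho\}=\{O\}$. Hence $\mathcal{D}(\mathbf{X})$ is zero primitive, and a fortiori single primitive; additive primitivity is then immediate from $P_{\rho+\eta}=\{O\}=\{O\}+\{O\}=P_\rho+P_\eta$ for all $\rho,\eta\in\mathcal{D}(\mathbf{X})$.

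Next I would treat homogeneity and balancedness, both of which read off the defining formula $(\alpha\rho)(x,y)=|\alpha|\,\rho(x,y)$. For any $\alpha\in\K$ this says exactly that $\alpha\rho=|\alpha|\rho$, so every element is homogeneous. If moreover $|\alpha|\leq1$, then $(\alpha\rho)(x,y)=|\alpha|\rho(x,y)\leq\rho(x,y)$ for all $x,y\in\mathbf{X}$ because $\rho(x,y)\geq0$; thus $\alpha\rho\leq\rho$ and every element is balanced (which, via the remark that a balanced evs is homogeneous, re-proves homogeneity as well).

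Finally, for convexity, fix $\rho\in\mathcal{D}(\mathbf{X})$ and scalars $\alpha,\beta\geq0$. Since $\alpha+\beta\geq0$ we have $|\alpha+\beta|=\alpha+\beta$, $|\alpha|=\alpha$, $|\beta|=\beta$, so for all $x,y\in\mathbf{X}$
\[
\big((\alpha+\beta)\rho\big)(x,y)=(\alpha+\beta)\rho(x,y)=\alpha\rho(x,y)+\beta\rho(x,y)=(\alpha\rho+\beta\rho)(x,y),
\]
whence $(\alpha+\beta)\rho=\alpha\rho+\beta\rho$ and $\mathcal{D}(\mathbf{X})$ is convex. I do not anticipate a genuine obstacle here; the only point worth flagging is that all five conclusions rest on the identity $[\mathcal{D}(\mathbf{X})]_0=\{O\}$ together with the absolute-value normalisation built into scalar multiplication, precisely mirroring the computation carried out for $\mathcal{N}(\mathbf{X})$ in Example \ref{e:spofnorms}.
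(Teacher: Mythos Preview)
Your proposal is correct and follows essentially the same approach as the paper: both derive single and additive primitivity from the fact that $[\mathcal{D}(\mathbf{X})]_0=\{O\}$ (i.e.\ zero primitivity), verify balancedness pointwise from $(\alpha\rho)(x,y)=|\alpha|\rho(x,y)$ when $|\alpha|\leq1$, and check convexity via $|\alpha+\beta|=|\alpha|+|\beta|$ for $\alpha,\beta\geq0$. The only cosmetic difference is that the paper infers homogeneity from balancedness, whereas you read it off directly from $\alpha\rho=|\alpha|\rho$ before also noting the balanced-implies-homogeneous route.
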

	
	\begin{proof}
		As $[\mathcal{D}(\mathbf{X})]_0=\{O\}$, $\mathcal{D}(\mathbf{X})$ is zero primitive and hence single primitive and additive primitive evs. For $\alpha\in\K$ with $|\alpha|\leq1$, $(\alpha \rho)(x,y)=|\alpha| \rho(x,y)\leq \rho(x,y)$, for all $x,y\in \mathbf{X}$. This implies $\alpha \rho \leq \rho$ for $|\alpha|\leq1$. Thus $\mathcal{D}(\mathbf{X})$ is a balanced evs and hence a homogeneous evs. Now for $\alpha, \beta\in\K$ with $\alpha,\beta\geq0$, $(\alpha+\beta)\rho(x,y)=|\alpha+\beta| \rho(x,y)$ = $(|\alpha|+|\beta|)\rho(x,y)$ = $|\alpha| \rho(x,y)+|\beta| \rho(x,y)=(\alpha \rho+\beta \rho)(x,y)$, for all $x,y\in \mathbf{X}$. This implies $(\alpha+\beta)\rho=\alpha \rho+\beta \rho$ for $\alpha, \beta\geq0$. Hence, $\mathcal{D}(\mathbf{X})$ is a convex evs.
	\end{proof}

	We now provide the topology of pointwise convergence on $\mathcal{D}(\mathbf{X})\subseteq [0,\infty)^{\mathbf{X}\times\mathbf{X}}$, $[0,\infty)$ being endowed with the subspace topology inherited from $\R$ with usual topology.  For open set $U$ in $[0,\infty)$ and $(x,y)\in \mathbf X\times\mathbf X$, we denote $W\big((x,y),U\big):=\big\{\rho\in\mathcal{D}(\mathbf{X}):\rho(x,y)\in U\}$. Then the family of sets $\Big\{W\big((x,y),U\big):(x,y)\in\mathbf{X}\times\mathbf{X},U\text{ is open in }[0,\infty)\Big\}$ forms a subbase for the topology of pointwise convergence, say $\tau$, on $\mathcal{D}(\mathbf{X})$. In this topology, a net $\{\rho_{_\lambda}\}_{\lambda\in D} $ [$D$ being a directed set] in $\mathcal{D}(\mathbf{X})$ converges to some $\rho$ if and only if $\{\rho_{_\lambda}(x,y)\}_{\lambda\in D}$ converges to $\rho(x,y)$ in $[0,\infty)$, for each $(x,y)\in \mathbf{X}\times\mathbf{X}$.
	 		
\begin{Th}
Consider the evs $\mathcal{D}(\mathbf{X})$  with the topology of pointwise convergence. Then,		
(1) the addition `$+$' $:\mathcal{D}(\mathbf{X})\times\mathcal{D}(\mathbf{X})\longrightarrow\mathcal{D}(\mathbf{X})$ is continuous.\\
(2) the scalar multiplication `$\cdot$' $: \K\times\mathcal{D}(\mathbf{X})\longrightarrow\mathcal{D}(\mathbf{X})$ is continuous, where $\K$ is endowed with the usual topology.\\		
(3) the partial order `$\leq$' is closed.\\		
Thus $\mathcal{D}(\mathbf{X})$ with the topology of pointwise convergence forms a topological evs over the field $\K$.
\label{ch5:t:metric_top}\end{Th}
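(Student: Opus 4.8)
The plan is to exploit the explicit description of convergence already recorded for the topology of pointwise convergence: a net $\{\rho_\lambda\}_{\lambda\in D}$ converges to $\rho$ in $\mathcal{D}(\mathbf{X})$ iff $\rho_\lambda(x,y)\to\rho(x,y)$ in $[0,\infty)$ for every $(x,y)\in\mathbf{X}\times\mathbf{X}$. Since continuity of maps and closedness of relations can both be tested with nets, each of the three assertions reduces to the corresponding elementary fact on the real line, together with the observation that all the operations and the order on $\mathcal{D}(\mathbf{X})\subseteq[0,\infty)^{\mathbf{X}\times\mathbf{X}}$ are defined coordinatewise.

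For (1), I would take a net $\{(\rho_\lambda,\eta_\lambda)\}$ in $\mathcal{D}(\mathbf{X})\times\mathcal{D}(\mathbf{X})$ converging to $(\rho,\eta)$. By the definition of the product topology and the description above, $\rho_\lambda(x,y)\to\rho(x,y)$ and $\eta_\lambda(x,y)\to\eta(x,y)$ in $[0,\infty)$ for each fixed $(x,y)$; continuity of addition on $\R$ then gives $(\rho_\lambda+\eta_\lambda)(x,y)=\rho_\lambda(x,y)+\eta_\lambda(x,y)\to\rho(x,y)+\eta(x,y)=(\rho+\eta)(x,y)$, i.e. $\rho_\lambda+\eta_\lambda\to\rho+\eta$. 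For (2) the argument is identical: from $\alpha_\lambda\to\alpha$ in $\K$ and $\rho_\lambda(x,y)\to\rho(x,y)$ one gets $(\alpha_\lambda\rho_\lambda)(x,y)=|\alpha_\lambda|\,\rho_\lambda(x,y)\to|\alpha|\,\rho(x,y)=(\alpha\rho)(x,y)$ using continuity of $t\mapsto|t|$ and of multiplication on $\R$, whence $\alpha_\lambda\rho_\lambda\to\alpha\rho$. (If one prefers to avoid nets, one can instead check that the preimage of a subbasic set $W\big((x,y),U\big)$ is open by writing it as a union of boxes built from subbasic sets, using openness of the maps $(s,t)\mapsto s+t$ and $(\alpha,t)\mapsto|\alpha|t$; the net computation is shorter.)

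For (3), I would verify the closedness of `$\leq$' directly on the graph $G_{\leq}\big(\mathcal{D}(\mathbf{X})\big)=\{(\rho,\eta):\rho\leq\eta\}$. Given a net $\{(\rho_\lambda,\eta_\lambda)\}$ in this graph converging to $(\rho,\eta)$, we have $\rho_\lambda(x,y)\leq\eta_\lambda(x,y)$ for every $\lambda$ and every $(x,y)$; passing to the limit and using that $\{(s,t)\in[0,\infty)^2:s\leq t\}$ is closed, we obtain $\rho(x,y)\leq\eta(x,y)$ for all $(x,y)$, i.e. $(\rho,\eta)\in G_{\leq}\big(\mathcal{D}(\mathbf{X})\big)$. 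Alternatively, one can appeal to Theorem \ref{t:partcl}: if $\rho\not\leq\eta$ there is $(x_0,y_0)$ with $\rho(x_0,y_0)>\eta(x_0,y_0)$; putting $c:=\tfrac12\big(\rho(x_0,y_0)+\eta(x_0,y_0)\big)$, $U:=(c,\infty)\cap[0,\infty)$ and $V:=[0,c)$, the sets $W\big((x_0,y_0),U\big)$ and $W\big((x_0,y_0),V\big)$ are open neighbourhoods of $\rho,\eta$ with $\big(\uparrow W((x_0,y_0),U)\big)\cap\big(\downarrow W((x_0,y_0),V)\big)=\emptyset$, since any $\sigma$ in the intersection would have to satisfy $\sigma(x_0,y_0)>c$ and $\sigma(x_0,y_0)<c$ simultaneously.

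There is no serious obstacle here; the entire content is that $[0,\infty)$, with the subspace topology from $\R$, is a closed subset of $\R$ on which addition, multiplication, absolute value and the order behave continuously (resp. closedly), and that all relevant notions are coordinatewise in $[0,\infty)^{\mathbf{X}\times\mathbf{X}}$. The only points deserving a moment's care are the final separation step in part (3) — checking that the chosen $U$ and $V$ genuinely force the up-set and down-set to be disjoint — and the fact that one must work with $[0,\infty)$ rather than all of $\R$, which causes no difficulty since $[0,\infty)$ is closed in $\R$. Combining (1)--(3) with the earlier theorems asserting that $\mathcal{D}(\mathbf{X})$ is an evs then yields that it is a topological evs over $\K$.
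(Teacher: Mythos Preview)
Your proposal is correct and follows essentially the same route as the paper: both reduce each of (1)--(3) to a coordinatewise statement on $[0,\infty)$ and verify it via nets, using continuity of addition/multiplication and closedness of the usual order on the reals. The only cosmetic differences are that the paper writes out the add-and-subtract estimate for (2) explicitly rather than invoking continuity of $(\alpha,t)\mapsto|\alpha|t$, and does not include your alternative argument for (3) via Theorem~\ref{t:partcl}.
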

	
\begin{proof}
	(1)	Let $\{\rho_{_\lambda}\}_{\lambda\in D}$ and $\{\eta_{_\lambda}\}_{\lambda\in D}$ be two nets in $\mathcal{D}(\mathbf{X})$ ($D$ being a directed set) such that $\rho_{_\lambda}\to \rho$ and $\eta_{_\lambda}\to \eta$ in $\mathcal{D}(\mathbf{X})$. Then $\rho_{_\lambda}(x,y)\to \rho(x,y)$ and $\eta_{_\lambda}(x,y)\to \eta(x,y)$ in $[0,\infty)$, for all $x,y\in \mathbf{X}$. This implies $\rho_{_\lambda}(x,y)+\eta_{_\lambda}(x,y)\to \rho(x,y)+\eta(x,y)$, for all $ x,y\in \mathbf{X}$ \big(since $`+$' is continuous on $[0,\infty)$\big). Therefore $(\rho_{_\lambda}+\eta_{_\lambda})\to(\rho+\eta)$ in $\mathcal{D}(\mathbf{X})$. Hence the addition is continuous.
		
	(2)	Let $\{\rho_{_\lambda}\}_{\lambda\in D}$ be a net in $\mathcal{D}(\mathbf{X})$ and $\{\alpha_{_\lambda}\}_{\lambda\in D}$ be a net in $\K$ ($D$ being a directed set) such that $\rho_{_\lambda}\to \rho$ in $\mathcal{D}(\mathbf{X})$ and $\alpha_{_\lambda}\to \alpha$ in $\K$. Then $\rho_{_\lambda}(x,y)\to \rho(x,y)$ for all $x,y\in \mathbf{X}$. Now for any $x,y\in \mathbf{X}$,
		\begin{align*}
			&\big|(\alpha_{_\lambda} \rho_{_\lambda})(x,y)-(\alpha \rho)(x,y)\big|= \big||\alpha_{_\lambda}| \rho_{_\lambda}(x,y)-|\alpha| \rho(x,y)\big|\\
			&\hspace{4cm}=\big||\alpha_{_\lambda}| \rho_{_\lambda}(x,y)-|\alpha_{_\lambda}| \rho(x,y)+|\alpha_{_\lambda}| \rho(x,y)-|\alpha| \rho(x,y)\big|\\
			&\hspace{4cm} \leq|\alpha_{_\lambda}| \big| \rho_{_\lambda}(x,y)-\rho(x,y)\big| + | \rho(x,y)|\big||\alpha_{_\lambda}|-|\alpha|\big|\\
			&\hspace{4cm}\to0\quad  \big(\text{since } |\alpha_{_\lambda}|\to|\alpha|, \rho_{_\lambda}(x,y)\to \rho(x,y)\big)
		\end{align*}
		Therefore $ \alpha_{_\lambda} \rho_{_\lambda}\to\alpha \rho$ in $\mathcal{D}(\mathbf{X})$. Hence the scalar multiplication is continuous.
		
	(3)	Let $\{(\rho_{_\lambda}, \eta_{_\lambda})\}_{\lambda\in D}$ be a net in $\mathcal{D}(\mathbf{X})\times \mathcal{D}(\mathbf{X})$ ($D$ being a directed set) converging to some $(\rho,\eta)\in \mathcal{D}(\mathbf{X})\times \mathcal{D}(\mathbf{X})$ with $\rho_{_\lambda}\leq \eta_{_\lambda}$, for all $\lambda\in D$. So   $\rho_{_\lambda}\to \rho,\  \eta_{_\lambda}\to \eta$ in $\mathcal{D}(\mathbf{X})$. Therefore, $\rho_{_\lambda}(x,y)\to \rho(x,y)$, $\eta_{_\lambda}(x,y)\to \eta(x,y)$, for all $x,y\in \mathbf{X}$ and $\rho_{_\lambda}(x,y)\leq \eta_{_\lambda}(x,y)$, for all $x,y\in \mathbf{X}$, for all $\lambda\in D$. This implies $\rho(x,y)\leq \eta(x,y)$, for all $x,y\in \mathbf{X}$ \big(since partial order `$\leq$' is closed on the topological evs $[0, \infty)$\big). So $\rho\leq \eta$ in $\mathcal{D}(\mathbf{X})$.
		Hence the partial order $`\leq$' is closed on $\mathcal{D}(\mathbf{X})$.
		
		Therefore $\big(\mathcal{D}(\mathbf{X}),+,\cdot,\leq, \tau\big)$ is a topological evs over $\K$.
	\end{proof}
	
	 $\mathcal{D}(\mathbf{X})\subseteq$ $[0,\infty)^{\mathbf{X}\times \mathbf{X}}$, where $[0,\infty)$ is a Tychonoff space and $\tau$ is the topology of pointwise convergence. So $\big(\mathcal{D}(\mathbf{X}), \tau\big)$ is also a Tychonoff space. Hence this topology on $\mathcal{D}(\mathbf{X})$ is uniformizable. A base for this uniformity is given by\\ \centerline{$\mathscr{B}:=\Big\{V\big((x_1,y_1),\ldots,(x_n,y_n);\epsilon\big):(x_1,y_1),\ldots,(x_n,y_n)\in \mathbf{X}\times\mathbf{X},n\in\N,\epsilon>0\Big\}$ where,} $V\big((x_1,y_1),\ldots,(x_n,y_n);\epsilon\big)
	 :=\big\{(d,e)\in\mathcal{D}(\mathbf{X})\times\mathcal{D}(\mathbf{X}):|d(x_i,y_i)-e(x_i,y_i)|<\epsilon,i=1,\ldots,n\big\}$
	 
	 Now the natural question arises : is $\mathcal{D}(\mathbf{X})$ a complete uniform space ? The answer is `NO', in general, as is shown in the following example.
	 
	 	\begin{Ex}
	 	$\mathcal{D}(\R^2)$ is not a complete evs.\\	
\textbf{\underline{Justification} :} For each $n\in\N$, define $d_n:\R^2\times\R^2\longrightarrow[0,\infty)$ by \\ 	 
	 \centerline{$d_{n}(x,y):=|u-v|+\frac{1}{n}|u'-v'|,\forall\,x=(u,u'),y=(v,v')\in\R^2$}
	 
	 It is a routine work to verify that $d_n$ is a metric on $\R^2$ for each $n\in\N$. 
	 
	 Let $\epsilon>0$ be arbitrary. Then for any $n,m\in\N$ and $x_i=(u_i,u'_i),y_i=(v_i,v'_i)\in\R^2$, $i=1,\ldots,k$, $\big|d_{n}(x_i,y_i) -d_m(x_{i},y_{i})\big|=\Big|\big\{|u_{i}-v_{i}|+\frac{1}{n}|u'_{i}-v'_{i}|\big\}-\big\{|u_{i}-v_{i}|+\frac{1}{m}|u'_{i}-v'_{i}|\big\}\Big|=|u'_{i}-v'_{i}|\big|\frac{1}{n}-\frac{1}{m}\big|\leq C\big|\frac{1}{n}-\frac{1}{m}\big|$,  where $C:=\displaystyle\max_{1\leq i\leq k}|u'_{i}-v'_{i}|$. Now $\exists\,n_0\in\N$ such that $\big|\frac{1}{n}-\frac{1}{m}\big|<\frac{\epsilon}{C+1}$, $\forall\,n,m\geq n_0$. Hence $\big|d_{n}(x_i,y_i) -d_m(x_{i},y_{i})\big|<C\cdot\frac{\epsilon}{C+1}<\epsilon$, for $i=1,\ldots,k$, $\forall\,n,m\geq n_0$. Thus $(d_n,d_m)\in V\big((x_1,y_1),\ldots,(x_k,y_k);\epsilon\big),\forall\,n,m\geq n_0$. This implies $\{d_{n}\}_{n\in\N}$ is a Cauchy sequence in $\mathcal{D}(\R^2)$.
	 Now $d_{n}(x,y)\to|u-v|$ as $n\to\infty$, if $x=(u,u'),y=(v,v')\in\R^2$. If we define $d(x,y):=|u-v|$, whenever $x=(u,u'),y=(v,v')\in\R^2$ then $d$ is not a metric on $\R^2$, since $d(x,y)=0\notimplies x=y$. So $d\notin\mathcal{D}(\R^2)$. Therefore $\mathcal{D}(\R^2)$ is an incomplete uniform space.\qed
	  \end{Ex}
	  
We now show that $\mathcal{N}(\X)$, explained in Example \ref{e:spofnorms}, can be embedded homeomorphically and order-isomorphically into $\mathcal{D}(\X)$, whenever $\X$ is a linear space over the field $\K$.
	 
	\begin{Th}
		Let $\X$ be a linear space over the field $\K$. Define $\psi:\mathcal{N}(\mathcal{X})\to\mathcal{D}(\mathcal{X})$ by $\psi(f):=\widetilde{f}$,  for all $f\in\mathcal{N}(\mathcal{X})$, where $\widetilde{f}(x,y):=f(x-y)$, for all $x,y\in\mathcal{X}$. Then $\psi$ is an injective order-morphism.\label{t:injective}
		\end{Th}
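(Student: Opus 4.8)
The plan is to verify, one clause at a time, that $\psi$ satisfies the four requirements of an order-morphism (Definition \ref{d:mor}) together with injectivity. The single device that drives essentially every step is that a norm is recovered from its associated metric by freezing the second argument at the zero vector $\theta$ of $\X$: for every $f\in\mathcal{N}(\X)$ one has $f(x)=\widetilde f(x,\theta)$ for all $x\in\X$.

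First I would check that $\psi$ is well defined, i.e.\ that $\widetilde f\in\mathcal{D}(\X)$ for each $f\in\mathcal{N}(\X)$. If $f=O$ then $\widetilde f=O\in\mathcal{D}(\X)$. If $f$ is a genuine norm, then $\widetilde f(x,y)=f(x-y)\geq 0$, with equality precisely when $x-y=\theta$, i.e.\ when $x=y$; symmetry holds because $f(x-y)=f\big(-(y-x)\big)=f(y-x)$ (a norm is unchanged under $x\mapsto -x$); and the triangle inequality $\widetilde f(x,z)\leq\widetilde f(x,y)+\widetilde f(y,z)$ is exactly the subadditivity of $f$ applied to $(x-y)+(y-z)=x-z$. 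Hence $\widetilde f$ is a metric on $\X$ and $\psi$ maps into $\mathcal{D}(\X)$.

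Next come the algebraic and order conditions, all of which are pointwise identities. For addition, $\widetilde{f+g}(x,y)=(f+g)(x-y)=f(x-y)+g(x-y)=\big(\widetilde f+\widetilde g\big)(x,y)$; for scalar multiplication, $\widetilde{\alpha f}(x,y)=(\alpha f)(x-y)=|\alpha|f(x-y)=\big(\alpha\widetilde f\big)(x,y)$; and if $f\leq g$ then $f(x-y)\leq g(x-y)$ for all $x,y$, i.e.\ $\widetilde f\leq\widetilde g$. This disposes of conditions (i)--(iii) of Definition \ref{d:mor}.

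For injectivity and condition (iv) I would invoke the recovery identity. If $\psi(f)=\psi(g)$ then $f(x-y)=g(x-y)$ for all $x,y\in\X$, and taking $y=\theta$ gives $f=g$; thus $\psi$ is injective, and consequently $\psi^{-1}(p)$ is a singleton for every $p$ in the range of $\psi$. Given $p\leq q$ with $p,q\in\psi\big(\mathcal{N}(\X)\big)$, write $\psi^{-1}(p)=\{f\}$ and $\psi^{-1}(q)=\{g\}$, so that $\widetilde f=p\leq q=\widetilde g$; evaluating at $y=\theta$ yields $f(x)\leq g(x)$ for all $x$, hence $f\leq g$ in $\mathcal{N}(\X)$, which is precisely $\psi^{-1}(p)\subseteq\downarrow\psi^{-1}(q)$ and $\psi^{-1}(q)\subseteq\uparrow\psi^{-1}(p)$. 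No step presents a genuine difficulty; the only places demanding a little care are the well-definedness argument, where the symmetry of $\widetilde f$ must be read off from the absolute homogeneity of the norm ($f(-z)=f(z)$) rather than from mere subadditivity, and the degenerate case $f=O$, which has to be handled separately so that $\psi$ lands inside $\mathcal{D}(\X)$.
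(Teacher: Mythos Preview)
Your proof is correct and follows essentially the same route as the paper's: both verify the order-morphism axioms pointwise, establish injectivity via the substitution $y=\theta$, and then dispose of condition (iv) by observing that injectivity collapses each preimage to a singleton. The only difference is that you spell out the metric axioms for $\widetilde f$ explicitly, whereas the paper dismisses well-definedness with a single ``clearly''; your extra care there is harmless and arguably an improvement.
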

	
		\begin{proof} Clearly $ \psi $ is well-defined. For any $f,g\in\norm$, $\psi(f+g)=\widetilde{f+g}=\widetilde{f}+\widetilde{g}=\psi(f)+\psi(g)$ \big[since $\widetilde{f+g}(x,y)=(f+g)(x-y)=f(x-y)+g(x-y)=\widetilde{f}(x,y)+\widetilde{g}(x,y)=(\widetilde{f}+\widetilde{g})(x,y)$, for all  $x,y\in\X$\big]. 
			
		For any $f\in\norm$ and $\alpha\in\K$, $\psi(\alpha f)=\widetilde{\alpha f}=\alpha \widetilde{f}=\alpha\psi(f)$ \big[since $\widetilde{\alpha f}(x,y)=(\alpha f)(x-y)$ = $|\alpha| f(x-y)$ = $|\alpha| \widetilde{f}(x,y)=(\alpha \widetilde{f})(x,y)$, for all  $x,y\in \X$ \big].
			
		For $f, g\in \norm$, $\psi(f)=\psi(g)\Longrightarrow \widetilde{f}=\widetilde{g} \Longrightarrow \widetilde{f}(x,y)=\widetilde{g}(x,y)$, $\forall\, x,y\in\X \Longrightarrow f(x-y)=g(x-y)$, $\forall\,x, y\in \X\Longrightarrow f(x)=g(x)$, $\forall\, x\in \X\Longrightarrow f=g$. So, $\psi$ is injective.
			
		Again, $f\leq g$ in $\norm$ $\Longleftrightarrow f(x)\leq g(x)$, $\forall x\in \X$ $\Longleftrightarrow f(x-y)\leq g(x-y), \forall\, x,y\in \X$ $\Longleftrightarrow \widetilde{f}(x,y)\leq \widetilde{g}(x,y)$, $\forall\, x,y\in\X$ $\Longleftrightarrow$ $\widetilde{f}\leq \widetilde{g}$ i.e. $\psi(f)\leq\psi(g)$ in $\mathcal{D}(\mathbf{X})$. This justifies that $ \psi $ is an order-morphism, since $\psi$ being injective for any $f\in \norm,\ \psi^{-1}\big(\psi(f)\big)=\{f\}$.
		\end{proof}
			
	\begin{Rem}
		By Proposition \ref{p:rangesubevs}, it follows that $\psi(\norm)$ is a subevs of $\mathcal{D(X)}$. Thus we can say that the collection of norms on a linear space $\X$ over the  field $\K$ can be embedded as a subevs of the collection of metrics on that linear space $\X$ over the same field $\K$.
	\end{Rem}
	
	\begin{Lem}
		The function $\psi:\mathcal{N(X)}\to\mathcal{D(X)}$ is  continuous.\label{l:normcont}
	\end{Lem}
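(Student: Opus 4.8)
The plan is to check continuity of $\psi$ directly on a subbase of the target topology, which is the quickest route since both $\norm$ and $\mathcal{D}(\X)$ carry topologies of pointwise convergence with explicitly described subbases. Recall from Example~\ref{e:spofnorms} that the sets $W(z,U):=\{f\in\norm:f(z)\in U\}$, with $z\in\X$ and $U$ open in $[0,\infty)$, form a subbase for the topology on $\norm$; and from the discussion preceding Theorem~\ref{ch5:t:metric_top} that the sets $W\big((x,y),U\big):=\{d\in\mathcal{D}(\X):d(x,y)\in U\}$, with $(x,y)\in\X\times\X$ and $U$ open in $[0,\infty)$, form a subbase for $\tau$ on $\mathcal{D}(\X)$. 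Since a map is continuous as soon as the preimage of each subbasic open set is open, it is enough to identify $\psi^{-1}\big(W((x,y),U)\big)$.

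The heart of the argument is the elementary observation that the evaluation of $\widetilde f$ at a pair $(x,y)$ is nothing but the evaluation of $f$ at the single vector $x-y\in\X$. Concretely, for fixed $(x,y)\in\X\times\X$ and open $U\subseteq[0,\infty)$,
\[
\psi^{-1}\big(W((x,y),U)\big)=\{f\in\norm:\widetilde f(x,y)\in U\}=\{f\in\norm:f(x-y)\in U\}=W(x-y,U),
\]
which is a subbasic open subset of $\norm$. Hence $\psi$ pulls subbasic open sets back to (sub)basic open sets, and therefore $\psi$ is continuous.

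Equivalently, one may phrase this with nets: if $f_\lambda\to f$ in $\norm$ then $f_\lambda(z)\to f(z)$ in $[0,\infty)$ for every $z\in\X$; specialising to $z=x-y$ gives $\widetilde{f_\lambda}(x,y)=f_\lambda(x-y)\to f(x-y)=\widetilde f(x,y)$ for every $(x,y)\in\X\times\X$, i.e.\ $\psi(f_\lambda)\to\psi(f)$ in $\mathcal{D}(\X)$. Either way, no genuine obstacle arises; the only point worth isolating is that a pointwise constraint on $\widetilde f$ at $(x,y)$ translates verbatim into a pointwise constraint on $f$ at $x-y$, after which continuity follows immediately from the definitions of the two pointwise-convergence topologies. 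Combined with Theorem~\ref{t:injective}, this exhibits $\psi$ as a continuous injective order-morphism, which is precisely the embedding announced in the remark following that theorem.
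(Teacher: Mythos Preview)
Your proof is correct and essentially the same as the paper's: the paper uses exactly your net argument (if $f_\lambda\to f$ pointwise then $\widetilde{f_\lambda}(x,y)=f_\lambda(x-y)\to f(x-y)=\widetilde f(x,y)$), while your subbase computation $\psi^{-1}\big(W((x,y),U)\big)=W(x-y,U)$ is just the static reformulation of the same observation. Both hinge on the single identity $\widetilde f(x,y)=f(x-y)$, so there is no substantive difference.
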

	
	\begin{proof}
		Let $\{f_\lambda\}_{\lambda\in D}$ be a net in $\mathcal{N(X)}$ (where $D$ is a directed set) such that $\{f_\lambda\}$ converges to $f\in\mathcal{N(X)}$ in the topology of pointwise convergence. Hence $f_\lambda(x)\to f(x)$ for all $x\in\mathcal{X}$. Therefore for all $x,y\in\mathcal{X}$, $f_\lambda(x-y)\to f(x-y)$. This implies $\widetilde{f_\lambda}(x,y)\to \widetilde{f}(x,y)$ for all $x,y\in\X$. Hence $\psi(f_\lambda)=\widetilde{f_\lambda}$ converges to $\psi(f)=\widetilde{f}$ in the topology of pointwise convergence on $\mathcal{D(X)}$. Therefore $\psi$ is  continuous.
	\end{proof}
	
	\begin{Lem}
		$\psi:\mathcal{N(X)}\to\psi\big(\norm\big)$ is an open map.\label{l:normopen}
	\end{Lem}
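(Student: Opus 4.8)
The plan is to reduce the claim to subbasic open sets and then exploit the injectivity of $\psi$ established in Theorem~\ref{t:injective}. Recall from Example~\ref{e:spofnorms} that a subbase for the topology of pointwise convergence on $\norm$ is given by the sets $W(x,U)=\{f\in\norm:f(x)\in U\}$, where $x\in\X$ and $U$ is open in $[0,\infty)$, while a subbase for the topology on $\mathcal{D}(\X)$ is given by the sets $W\big((a,b),V\big)=\{\rho\in\mathcal{D}(\X):\rho(a,b)\in V\}$, where $(a,b)\in\X\times\X$ and $V$ is open in $[0,\infty)$. Since $\psi$ is injective, the set-image operation $A\mapsto\psi(A)$ commutes with arbitrary unions and with \emph{finite} intersections; as every open set of $\norm$ is a union of finite intersections of subbasic sets, it therefore suffices to show that $\psi\big(W(x,U)\big)$ is open in the subspace topology of $\psi(\norm)$ for each subbasic $W(x,U)$.

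The crucial identity is that, for every $f\in\norm$ and every $x\in\X$,
\[
\widetilde{f}(x,0)=f(x-0)=f(x),
\]
where $0$ denotes the zero vector of $\X$. Hence $f(x)\in U$ if and only if $\widetilde{f}(x,0)\in U$, which yields
\[
\psi\big(W(x,U)\big)=\big\{\,\widetilde{f}:f\in\norm,\ f(x)\in U\,\big\}=\psi(\norm)\cap W\big((x,0),U\big).
\]
The right-hand side is, by definition of the subspace topology, open in $\psi(\norm)$.

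Combining these observations, let $G\subseteq\norm$ be open and write $G=\bigcup_{\alpha}\bigcap_{i\in F_\alpha}W(x_{\alpha i},U_{\alpha i})$ with each $F_\alpha$ finite. Using the injectivity of $\psi$,
\[
\psi(G)=\bigcup_{\alpha}\bigcap_{i\in F_\alpha}\psi\big(W(x_{\alpha i},U_{\alpha i})\big)=\bigcup_{\alpha}\bigcap_{i\in F_\alpha}\Big(\psi(\norm)\cap W\big((x_{\alpha i},0),U_{\alpha i}\big)\Big),
\]
which is a union of finite intersections of relatively open subsets of $\psi(\norm)$, hence relatively open. Therefore $\psi:\norm\to\psi(\norm)$ is an open map.

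I do not anticipate a genuine obstacle in this argument; the two points that require a little care are: (a) that the image operation commutes with \emph{finite} intersections precisely because $\psi$ is injective (this fails for arbitrary maps, and it is exactly what makes it legitimate to verify the statement only on subbasic sets); and (b) the bookkeeping identity $\widetilde{f}(x,0)=f(x)$, which is what pairs a subbasic neighbourhood in $\norm$ with the trace on $\psi(\norm)$ of a subbasic neighbourhood in $\mathcal{D}(\X)$.
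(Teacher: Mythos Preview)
Your proof is correct and follows essentially the same route as the paper: both use injectivity of $\psi$ to pass the image through finite intersections of subbasic sets and then identify $\psi\big(W(x,U)\big)$ with $\psi(\norm)\cap W\big((x,\theta),U\big)$ via the identity $\widetilde{f}(x,\theta)=f(x)$. The only cosmetic difference is that the paper works directly with a single basic open set $\bigcap_{i=1}^{n}W(x_i,U_i)$, whereas you spell out the reduction from an arbitrary open $G$ to subbasic sets.
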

	
	\begin{proof}
		Let $x_1,x_2,...,x_n\in\mathcal{X}$ and $U_1,U_2,...,U_n$ be open in  $[0,\infty)$. Any basic open set in $\norm$ is $\bigcap\limits_{i=1}^{n}W(x_i,U_i)$. Then		
\begin{align*}
&\hspace*{0.5cm}\psi\Big(\bigcap\limits_{i=1}^{n}W(x_i,U_i)\Big)\\ &=\bigcap\limits_{i=1}^n\psi\Big(W(x_i,U_i)\Big)\thinspace(\text{since }\psi\text{ is injective, by Theorem \ref{t:injective}})\\
&=\bigcap\limits_{i=1}^n\Big\{\psi(f):f\in\norm,f(x_i)\in U_i\Big\} \\
&=\bigcap\limits_{i=1}^n\Big\{\psi(f):\widetilde{f}(x_i,\theta)\in U_i\Big\}\\
&=\Big[\bigcap\limits_{i=1}^nW\big((x_i,\theta),U_i\big)\Big]\cap \psi\big(\norm\big)
\end{align*}		
Since $\bigcap\limits_{i=1}^{n} W\big((x_i,\theta),U_i\big)$ is a basic open set in $\mathcal{D(X)}$, $\psi\Big(\bigcap\limits_{i=1}^{n}W(x_i,U_i)\Big)$ is an open set in $\psi\big(\norm\big)$. Consequently $\psi$ is an open map. 
	\end{proof}
	
	\begin{Th}
		The map $\psi:\mathcal{N(X)}\to\psi\big(\mathcal{N(X)}\big)$ is a topological order-isomorphism. \label{t:normtopmor}
	\end{Th}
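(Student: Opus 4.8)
The plan is to assemble the three facts already established in the excerpt---that $\psi$ is an injective order-morphism (Theorem \ref{t:injective}), that $\psi$ is continuous (Lemma \ref{l:normcont}), and that $\psi:\norm\to\psi(\norm)$ is an open map (Lemma \ref{l:normopen})---into the single assertion that $\psi$, regarded as a map onto its image, is simultaneously an order-isomorphism and a homeomorphism, i.e.\ a topological order-isomorphism in the sense of Definition \ref{d:mor}.

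First I would corestrict $\psi$ to $\psi(\norm)$ and note that this corestriction is a bijection: surjectivity onto $\psi(\norm)$ is automatic, and injectivity is part of Theorem \ref{t:injective}. Next I would check that the corestriction is still an order-morphism. Conditions (i) and (ii) of Definition \ref{d:mor} (additivity and homogeneity) and condition (iii) ($f\le g\Rightarrow\psi(f)\le\psi(g)$) were all verified in Theorem \ref{t:injective} and are unaffected by shrinking the codomain; condition (iv) becomes vacuous here because $\psi$ is injective, so $\psi^{-1}$ of any element of $\psi(\norm)$ is a singleton and the inclusions in (iv) hold trivially. Hence $\psi:\norm\to\psi(\norm)$ is a bijective order-morphism, that is, an order-isomorphism. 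It is worth recording explicitly that the chain of equivalences in the proof of Theorem \ref{t:injective} gives not only $f\le g\Rightarrow\widetilde f\le\widetilde g$ but also the converse $\widetilde f\le\widetilde g\Rightarrow f\le g$, so the inverse map reflects the order as well.

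It then remains to see that this order-isomorphism is a homeomorphism. By Lemma \ref{l:normcont}, $\psi:\norm\to\mathcal{D(X)}$ is continuous, and since $\psi(\norm)$ carries the subspace topology inherited from $\mathcal{D(X)}$, the corestriction $\psi:\norm\to\psi(\norm)$ is continuous as well. By Lemma \ref{l:normopen}, this corestriction is an open map. A continuous, open bijection is a homeomorphism, so $\psi:\norm\to\psi(\norm)$ is a homeomorphism. Combining this with the previous paragraph, $\psi$ is a bijective order-morphism onto $\psi(\norm)$ that is also a homeomorphism, which is exactly the definition of a topological order-isomorphism.

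I do not expect any genuine obstacle in this argument: all the substantive work has already been carried out in Theorem \ref{t:injective} and Lemmas \ref{l:normcont} and \ref{l:normopen}, and the proof is essentially bookkeeping. The only points deserving a moment's care are confirming that corestricting the codomain preserves the order-morphism axioms (in particular that axiom (iv) is trivially satisfied once injectivity is known) and that $\psi$ reflects the order rather than merely preserving it, both of which follow from what is already in hand.
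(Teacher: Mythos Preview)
Your proposal is correct and follows exactly the paper's approach: the paper's own proof is the single sentence ``It directly follows from Theorem \ref{t:injective}, Lemma \ref{l:normcont} and Lemma \ref{l:normopen},'' and you have simply unpacked that citation, spelling out why injectivity, continuity, and openness combine to give a topological order-isomorphism onto the image.
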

	
	\begin{proof}
		It directly follows from Theorem \ref{t:injective}, Lemma \ref{l:normcont} and Lemma \ref{l:normopen}.
	\end{proof}
	
	Thus the evs structure of $ \norm $ and $ \mathcal{D(\X)} $ are connected. We shall now compare two metrics or two norms in the following sections using the theory of exponential vector space.
	
\section{Comparability of metrics on a non-empty set}
	
Two metrics on a non-empty set can be compared in several ways. The most common is by comparing the topologies they induce. In this respect two metrics are called \textit{equivalent} if they produce the same topology. Also we may compare two metrics by comparing which topology, induced by them, is finer. Another one is by comparing their completeness property. In this section we shall compare two metrics in terms of `\textit{orderly dependence}', the \textit{order} being the partial order of the exponential vector space structure of $ \mathcal{D(\X)} $. This notion was introduced in the article `\textit{Basis and Dimension of Exponential Vector Space}' \cite{basis}. This mode of comparison also has some connection with the above mentioned `completeness' and `equivalence'. Let us first present some preliminary definitions and results related to basis and dimension of exponential vector space.
	
\begin{Def}\cite{basis}
		Let $X$ be an evs over $\K$ and $x\in X\smallsetminus X_0$. Define \\
\centerline{$L(x):=\big\{z\in X: z\geq \alpha x + p,\alpha\in\K^{*},p\in X_0\big\},\text{ where }\K^{*}\equiv \K\smallsetminus\{0\}.$}
		These sets $L(x)$ for different $x\in X\smallsetminus X_0$ are named as \textit{testing sets} of $X$.\index{Testing set}
	\end{Def}

	\begin{Prop}{\em\cite{basis}}
		(i) $\forall\, x\in X\smallsetminus X_{0}$, $x\in L(x)$ and $\uparrow L(x)=L(x)$,
		
		(ii) $x\leq y\,(x,y\in X\smallsetminus X_{0})\implies L(x)\supseteq L(y)$.
		
		(iii) If $x=\alpha y+p$ for some $\alpha\in\K^{*}$, $p\in X_{0}$ and $y\in X\smallsetminus X_{0}$, then $L(x)=L(y)$.
		
		(iv) $L(x)\cap X_{0}=\emptyset$.
		
		(v) If $a\in L(b)$ then $L(a)\subseteq L(b)$.
		
		(vi) For any $x,y\in X\smallsetminus X_{0}$, $L(x)\cap L(y)\not=\emptyset$. 
	\label{p:L(x)}\end{Prop}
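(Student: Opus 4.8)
The plan is to prove the six items in order, since the later ones lean on the earlier ones. For (i), to show $x\in L(x)$ I would take $\alpha=1$ and $p=\theta$, giving $x\geq 1\cdot x+\theta = x$ by axiom $A_3$(iv) and the fact that $\theta$ is the additive identity; since $\theta\in X_0$ and $1\in\K^*$, this witnesses $x\in L(x)$. The equality $\uparrow L(x)=L(x)$ is the statement that $L(x)$ is an up-set: if $z\in L(x)$ with $z\geq\alpha x+p$ and $w\geq z$, then $w\geq\alpha x+p$ by transitivity of $\leq$, so $w\in L(x)$; the reverse inclusion $L(x)\subseteq\uparrow L(x)$ is immediate from $z\geq z$. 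For (ii), if $x\leq y$ and $z\in L(y)$, say $z\geq\alpha y+p$, then by $A_2$ (monotonicity of addition) $\alpha y+p\geq\alpha x+p$ — wait, $A_2$ gives $\alpha x\leq\alpha y$ from $x\leq y$, hence $\alpha x+p\leq\alpha y+p$ — so $z\geq\alpha x+p$ and $z\in L(x)$; thus $L(y)\subseteq L(x)$.

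For (iii), suppose $x=\alpha y+p$ with $\alpha\in\K^*$, $p\in X_0$. To show $L(x)\subseteq L(y)$: if $z\in L(x)$ then $z\geq\beta x+q=\beta(\alpha y+p)+q=(\beta\alpha)y+(\beta p+q)$ using $A_3$(i) and associativity/commutativity of $+$; since $\beta\alpha\in\K^*$ and $\beta p+q\in X_0$ (as $X_0$ is a vector space, closed under the operations), we get $z\in L(y)$. For the reverse inclusion, I would solve for $y$: from $x=\alpha y+p$ one expects $y=\alpha^{-1}x+\alpha^{-1}(-p)$ — here I must be careful, since in an evs $\alpha^{-1}x+\alpha^{-1}(-p)$ need not literally equal $y$; rather I would use $A_3$(ii),(iv) to write $\alpha^{-1}x = \alpha^{-1}(\alpha y+p)=\alpha^{-1}\alpha\, y+\alpha^{-1}p = y+\alpha^{-1}p$ by $A_3$(i),(ii),(iv), hence $\alpha^{-1}x + \alpha^{-1}(-p) = y + \alpha^{-1}p + \alpha^{-1}(-p) = y+\theta = y$ using $A_5$ (since $p\in X_0$, $\alpha^{-1}p\in X_0$ and $\alpha^{-1}p+(-1)\alpha^{-1}p=\theta$, noting $\alpha^{-1}(-p)=(-1)\alpha^{-1}p$). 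With $y$ now expressed as a nonzero scalar multiple of $x$ plus a primitive, the same argument as above yields $L(y)\subseteq L(x)$. This bookkeeping with $X_0$ and the axiom $A_5$ is the step I expect to be the main obstacle — it is where the weakness of the evs axioms (no additive inverses outside $X_0$) forces genuine care rather than routine algebra.

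For (iv), if some $z\in L(x)\cap X_0$, then $z\geq\alpha x+p$ for some $\alpha\in\K^*$, $p\in X_0$; but $z\in X_0$ is minimal, so nothing is strictly below it and in fact $z\geq w$ forces $w=z$ — more precisely, by definition of $X_0$ in $A_5$, $y\not\leq z$ for all $y\neq z$, so $\alpha x+p=z\in X_0$; then $\alpha x = \alpha x + p + (-1)p$ would be primitive (subtracting the primitive $p$ as in (iii)), whence $x=\alpha^{-1}(\alpha x)$ is primitive, contradicting $x\in X\smallsetminus X_0$. For (v), if $a\in L(b)$ then $a\geq\alpha b+p$ for some $\alpha\in\K^*$, $p\in X_0$; by (ii) applied with the pair $\alpha b+p\leq a$ — but these may both be non-primitive — I get $L(a)\subseteq L(\alpha b+p)$, and by (iii), $L(\alpha b+p)=L(b)$, so $L(a)\subseteq L(b)$; I should check $\alpha b+p\notin X_0$, which holds since otherwise $b$ would be primitive as in (iv). Finally (vi): given $x,y\in X\smallsetminus X_0$, I would exhibit an explicit common element, e.g. $z:=x+y$ (or $\tfrac12 x+\tfrac12 y$); since $x+y\geq 1\cdot x + p_y$ where $p_y\in P_y\subseteq X_0$ is a primitive below $y$ (using $A_6$ and $A_2$), we get $x+y\in L(x)$, and symmetrically $x+y\in L(y)$, so $L(x)\cap L(y)\neq\emptyset$.
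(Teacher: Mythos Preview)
Your argument is correct in every part, and each step is justified by the appropriate evs axiom; in particular your care in (iii) and (iv) to cancel a primitive element via $A_5$ (using that $X_0$ is a genuine vector space) is exactly the point that needs attention, and you handle it properly. The paper itself does not supply a proof of this proposition---it is merely quoted from \cite{basis}---so there is no in-paper argument to compare against; your direct verification from the axioms is the natural route and would serve as a complete proof.
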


	\begin{Def}\cite{basis}
		A subset $B$ of $X\smallsetminus X_0$ is said to be a \textit{generator} of $X\smallsetminus X_0$ if\\ \centerline{$\displaystyle X\smallsetminus X_0=\bigcup_{x\in B} L(x).$}
\label{d:gen}	\end{Def}

	\begin{Not}{\cite{basis}}
		The set $X\smallsetminus X_{0}$ always generates $X\smallsetminus X_{0}$. So generator always exists for $X\smallsetminus X_{0}$. Any superset of a generator of $X\smallsetminus X_{0}$ is also a generator of $X\smallsetminus X_{0}$.
	\end{Not}

	\begin{Def}{\cite{basis}}
		Two elements $x,y\in X\smallsetminus X_0$ are said to be \textit{orderly dependent} if either $x\in L(y)$ or $y\in L(x)$.
	\end{Def}

	\begin{Def}\cite{basis}
	Two elements $x,y\in X\smallsetminus X_0$ are said to be \textit{orderly independent} if they are not orderly
	dependent, i.e. neither $x\in L(y)$ nor $y\in L(x)$. 
	
	A subset $ B $ of $ X\smallsetminus X_0 $ is said to be \textit{orderly independent} if any two members $ x, y\in B $ are orderly	independent. Thus two orderly independent elements of $ X $ cannot be comparable with respect to the partial order.
\end{Def}

	\begin{Def}{\cite{basis}}
		A subset $B$ of $X\smallsetminus X_0$ is said to be a \textit{basis} of $X\smallsetminus X_0$ if $B$ is orderly independent and generates $X\smallsetminus X_0$. 
\label{d:basis}	\end{Def}

	\begin{Th}{\em\cite{basis}}
		For a topological evs $X$, $X\smallsetminus X_0$ either has no basis or has uncountably many bases.
	\end{Th}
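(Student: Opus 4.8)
The plan is as follows. If $X\smallsetminus X_0$ has no basis there is nothing to prove, so suppose $X\ne X_0$ (hence $X\smallsetminus X_0\ne\emptyset$) and that $B$ is a basis of $X\smallsetminus X_0$; note that $B\ne\emptyset$. The idea is to keep all of $B$ except one fixed element $x_0$, and to replace $x_0$ by its positive real rescalings $tx_0$ ($t>0$); this produces a one-parameter family of sets $B_t$, each of which will again be a basis. The hard part, where the topological hypothesis is genuinely needed, is to show that these $B_t$ are pairwise distinct.

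First I would record the elementary equivalence, valid for $a,b\in X\smallsetminus X_0$, that $a\in L(b)$ if and only if $L(a)\subseteq L(b)$: the direction ``$\Rightarrow$'' is Proposition \ref{p:L(x)}(v), and ``$\Leftarrow$'' holds because $a\in L(a)$ by Proposition \ref{p:L(x)}(i). Thus whether two elements are orderly dependent depends only on their testing sets. Now fix $x_0\in B$. For any $t>0$, writing $tx_0=tx_0+\theta$ with $t\in\K^{*}$ and $\theta\in X_0$ shows $tx_0\in L(x_0)$ directly from the definition of the testing set; hence $tx_0\notin X_0$ by Proposition \ref{p:L(x)}(iv) and $L(tx_0)=L(x_0)$ by Proposition \ref{p:L(x)}(iii). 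Set $B_t:=(B\smallsetminus\{x_0\})\cup\{tx_0\}$. Then $B_t\subseteq X\smallsetminus X_0$; $B_t$ is orderly independent, since $L(tx_0)=L(x_0)$ together with the equivalence above reduces orderly independence of any pair in $B_t$ to that of the corresponding pair in $B$; and $B_t$ generates $X\smallsetminus X_0$, since in $\bigcup_{x\in B}L(x)$ the term $L(x_0)$ has merely been replaced by the equal set $L(tx_0)$. So each $B_t$, $t>0$, is a basis of $X\smallsetminus X_0$.

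It remains to prove $B_s\ne B_t$ whenever $0<s\ne t$, and for this I would first show that $t\mapsto tx_0$ is injective on $(0,\infty)$. Suppose $sx_0=tx_0$ with $0<s<t$, and put $r:=t/s>1$; multiplying by $1/s$ and using the axioms $A_3$ gives $x_0=rx_0$, and then repeated multiplication by $1/r$ gives $x_0=r^{-n}x_0$ for every $n\in\N$. Since scalar multiplication is continuous and $r^{-n}\to 0$ in $\K$, the constant sequence $r^{-n}x_0=x_0$ converges to $0\cdot x_0=\theta$; as $X$ is Hausdorff (Remark \ref{rm:topvec}), this forces $x_0=\theta\in X_0$, contradicting $x_0\in X\smallsetminus X_0$. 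With injectivity established, note also that $tx_0\notin B\smallsetminus\{x_0\}$ for every $t>0$: otherwise $tx_0=y$ for some $y\in B$, $y\ne x_0$, whence $L(y)=L(tx_0)=L(x_0)$, so $x_0$ and $y$ would be orderly dependent inside $B$, impossible. Consequently, for $0<s\ne t$ we have $sx_0\in B_s\smallsetminus B_t$ (it lies neither in $B\smallsetminus\{x_0\}$ nor equals $tx_0$), so $B_s\ne B_t$. Hence $\{B_t:t>0\}$ is an uncountable family of bases of $X\smallsetminus X_0$.

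The only real obstacle here is the distinctness step; the order-theoretic manipulations with testing sets are purely formal. The point is that injectivity of $t\mapsto tx_0$ truly requires the topology — continuity of scalar multiplication together with the Hausdorffness that follows from the closed order — since in a merely algebraic evs one could have $rx_0=x_0$ for some $r\ne 1$, which would collapse the family; this is exactly why the statement is asserted only for topological exponential vector spaces.
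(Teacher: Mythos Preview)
Your argument is correct. The paper does not actually prove this theorem --- it is quoted without proof from the cited reference \cite{basis} --- so there is no ``paper's own proof'' to compare against here. That said, the approach you take (replacing a single basis element by its positive scalar multiples and using continuity of scalar multiplication together with Hausdorffness to force injectivity of $t\mapsto tx_0$) is the standard one and is almost certainly what is done in \cite{basis}. Every step checks out against the axioms and results collected in the paper: Proposition~\ref{p:L(x)}(iii) gives $L(tx_0)=L(x_0)$, the equivalence $a\in L(b)\iff L(a)\subseteq L(b)$ transfers orderly independence from $B$ to $B_t$, axiom $A_4$ gives $0\cdot x_0=\theta$, and Remark~\ref{rm:topvec} supplies the Hausdorffness needed for uniqueness of limits.
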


	\begin{Th}{\em\cite{basis}}
		A set $B\,(\subset X\smallsetminus X_{0})$ is a basis of $X\smallsetminus X_{0}$ iff it is a minimal generating subset of $X\smallsetminus X_{0}$. {\em[}Here minimal generating subset $B$ of $X\smallsetminus X_{0}$ means there does not exist any proper subset of $B$ which can generate $X\smallsetminus X_{0}$.{\em]}
	\end{Th}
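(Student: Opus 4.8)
The plan is to prove the equivalence by establishing the two implications separately; the only non-trivial ingredient is Proposition \ref{p:L(x)}, most crucially part (v), which says $a\in L(b)\Rightarrow L(a)\subseteq L(b)$.

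First I would show that every basis is a minimal generating subset. Let $B$ be a basis of $X\smallsetminus X_0$. By Definition \ref{d:basis} it generates $X\smallsetminus X_0$, so it remains only to preclude a proper generating subset. Assume for contradiction that $B'\subsetneq B$ generates $X\smallsetminus X_0$, and choose $x\in B\smallsetminus B'$. Since $x\in X\smallsetminus X_0=\bigcup_{y\in B'}L(y)$, there is $y\in B'$ with $x\in L(y)$, and $y\neq x$ because $x\notin B'$. Then $x$ and $y$ are orderly dependent, contradicting the orderly independence of $B$. Hence $B$ admits no proper generating subset.

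Conversely, I would show that a minimal generating subset is a basis. Let $B$ be minimal generating; since it already generates, it suffices to verify orderly independence. If it failed, there would be distinct $x,y\in B$ that are orderly dependent, say $x\in L(y)$ (the case $y\in L(x)$ being symmetric, with the roles of $x$ and $y$ — and the element to be deleted — interchanged). By Proposition \ref{p:L(x)}(v), $L(x)\subseteq L(y)$. Set $B':=B\smallsetminus\{x\}\subsetneq B$; note $y\in B'$. Then
\[
\bigcup_{z\in B'}L(z)\subseteq\bigcup_{z\in B}L(z)=\Big(\bigcup_{z\in B'}L(z)\Big)\cup L(x)=\bigcup_{z\in B'}L(z),
\]
where the last equality holds because $L(x)\subseteq L(y)\subseteq\bigcup_{z\in B'}L(z)$. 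Thus $\bigcup_{z\in B'}L(z)=X\smallsetminus X_0$, so $B'$ is a proper generating subset of $B$ — contradiction. Therefore $B$ is orderly independent, hence a basis.

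I do not expect a genuine obstacle: the statement is essentially an unwinding of the definitions of generator (Definition \ref{d:gen}), orderly independence, and basis, together with the absorption property of testing sets. The one point demanding care is that orderly dependence of $x$ and $y$ must be leveraged through Proposition \ref{p:L(x)}(v) to obtain one testing set contained in the other, so that discarding the element with the \emph{smaller} testing set leaves the union $\bigcup L(z)$ — and hence the generating property — intact; the degenerate subcase where $x\in L(y)$ and $y\in L(x)$ simultaneously (so $L(x)=L(y)$) is already covered by this reasoning.
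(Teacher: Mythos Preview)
Your proof is correct. The paper does not actually supply its own proof of this theorem: it is quoted from \cite{basis} as a background result in the prerequisites section, with no argument given. Your argument is the natural one---unpacking Definitions \ref{d:gen} and \ref{d:basis} and using Proposition \ref{p:L(x)}(v) for the absorption $L(x)\subseteq L(y)$---and would be the expected proof in the original source as well.
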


	\begin{Res}{\em\cite{basis}}
		Every basis of $X\smallsetminus X_{0}$ is a maximal orderly independent subset of $X\smallsetminus X_{0}$. {\em[}Here maximal orderly independent subset $B$ of $X\smallsetminus X_{0}$ means there does not exist any orderly independent subset of $X\smallsetminus X_{0}$ containing $B$ properly.{\em]}
	\end{Res}

		Converse of this result is false in general i.e any maximal orderly independent subset of $X\smallsetminus X_{0}$ may fail to be a basis of $X\smallsetminus X_{0}$.
		
	\begin{Th}{\em\cite{basis}}
		If $A$ and $B$ are two bases of $X\smallsetminus X_{0}$, then $\text{card }A=\text{card }B$.
	\end{Th}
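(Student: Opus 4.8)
The plan is to imitate the classical proof that any two bases of a vector space have equal cardinality, but with the testing sets $L(\,\cdot\,)$ playing the role of linear spans and with the minimality characterisation of bases recalled above (``$B$ is a basis iff it is a minimal generating subset of $X\smallsetminus X_0$'') substituting for the usual linear-algebra manipulations. Concretely I would construct an injection $f\colon B\to A$ and, by the symmetric construction, an injection $A\to B$, and then invoke the Cantor--Schr\"oder--Bernstein theorem to conclude $\text{card }A=\text{card }B$. Note that no finite/infinite case split is needed if one argues this way.

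The first step is to attach to each basis element a \emph{private witness}. Since $B$ is a minimal generating subset, for every $b\in B$ the proper subset $B\smallsetminus\{b\}$ fails to generate $X\smallsetminus X_0$, so there is some $x_b\in X\smallsetminus X_0$ with $x_b\notin\bigcup_{c\in B\smallsetminus\{b\}}L(c)$; because $B$ itself generates, necessarily $x_b\in L(b)$. Thus $x_b\in L(b)$ while $x_b\notin L(c)$ for every $c\in B$ with $c\neq b$. Symmetrically, every $a\in A$ gets a private witness $y_a\in L(a)\smallsetminus\bigcup_{a'\in A\smallsetminus\{a\}}L(a')$.

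For the second step, define $f$: given $b\in B$, its witness $x_b$ lies in $X\smallsetminus X_0=\bigcup_{a\in A}L(a)$ since $A$ generates, so choose $f(b)\in A$ with $x_b\in L\big(f(b)\big)$. To see $f$ is injective, suppose $f(b_1)=f(b_2)=a$. Since $B$ generates and $a\in X\smallsetminus X_0$, there is $c\in B$ with $a\in L(c)$, whence $L(a)\subseteq L(c)$ by Proposition~\ref{p:L(x)}(v). Then $x_{b_1}\in L(a)\subseteq L(c)$, and the private-witness property of $x_{b_1}$ forces $c=b_1$; likewise $x_{b_2}\in L(a)\subseteq L(c)$ forces $c=b_2$. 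Hence $b_1=c=b_2$. This gives $\text{card }B\leq\text{card }A$, and interchanging the roles of $A$ and $B$ (using the witnesses $y_a$) gives $\text{card }A\leq\text{card }B$, so Cantor--Schr\"oder--Bernstein finishes the proof.

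The one genuinely delicate point is the ``climbing'' move in the injectivity argument. The testing sets $\{L(b):b\in B\}$ are \emph{never} pairwise disjoint (Proposition~\ref{p:L(x)}(vi)), so one cannot recover $b$ from the bare condition $x\in L(b)$; what rescues the argument is that minimality of $B$ supplies, for each $b$, a point $x_b$ sitting in $L(b)$ and in \emph{no other} testing set of $B$, and that property~(v) lets us transport this exclusivity upward through $a$ to the element $c$. Pinning down that $c$ must simultaneously equal $b_1$ and $b_2$ is the crux; everything else is bookkeeping.
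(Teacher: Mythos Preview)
Your argument is correct. Note, however, that the present paper does not supply its own proof of this theorem: it is quoted from \cite{basis} without proof, so there is no in-paper argument to compare against directly.

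One small simplification worth recording: your private witnesses $x_b$ can be taken to be the basis elements themselves. Indeed, $b\in L(b)$ by Proposition~\ref{p:L(x)}(i), and if $b\in L(c)$ for some $c\in B$ with $c\neq b$ then $b$ and $c$ would be orderly dependent, contradicting the orderly independence of $B$. Thus the detour through minimality of $B$ as a generating set, while perfectly valid, is unnecessary; orderly independence already supplies the exclusivity you need. With this choice the map $f$ reads simply ``pick $f(b)\in A$ with $b\in L\big(f(b)\big)$'', and your injectivity argument (climb from $a=f(b_1)=f(b_2)$ to some $c\in B$ with $a\in L(c)$, then use Proposition~\ref{p:L(x)}(v) and the witness property to force $b_1=c=b_2$) goes through verbatim. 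Everything else---the symmetric construction and the appeal to Cantor--Schr\"oder--Bernstein---is exactly right.
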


	\begin{Def}\cite{basis}
		For an evs $X$, we define \textit{dimension} of $X\smallsetminus X_0$ as\\
		\centerline{$\text{dim }(X\smallsetminus X_0):= \text{card}(B),\text{ where } B\text{ is a basis of }X\smallsetminus X_0$.} Then we shall represent dimension of the evs $X$ as $\text{ dim }(X):=[\text{dim}(X\smallsetminus X_0):\text{dim }X_0]$. If $X_0=\{\theta\}$, then dimension of $X_0$ will be taken as $0$, since then $X_0$ has no basis [as vector space].
	\end{Def}
	
	\begin{Res}{\em\cite{basis}}
		Let $X$ be an evs and $B$ be a basis of $X\smallsetminus X_{0}$. Then $\downarrow x\smallsetminus X_{0}\subseteq L(x)$, for each $x\in B$.\label{r:basefeas}
	\end{Res}

	\begin{Def}{\cite{basis}}
		For an evs $X$, let $Q(X):=\big\{x\in X\smallsetminus X_{0}:(\downarrow x\smallsetminus X_{0})\subseteq L(x)\big\}$. The set $Q(X)$ is called \textit{feasible set} of $X$.\label{d:feasible}
	\end{Def}

	\begin{Rem}{\cite{JTh}}
		From Result \ref{r:basefeas}, it follows that if there exists a basis $B$ of $X\smallsetminus X_{0}$, then $B\subseteq Q(X)$. If for an evs $ X $, $Q(X)=\emptyset$, then $X\smallsetminus X_{0}$ does not have a basis. Also for an evs $X$, $Q(X)\not=\emptyset$ does not necessarily imply that $X\smallsetminus X_{0}$ possesses a basis.
\label{rm:basefeas}\end{Rem}

	\begin{Lem}{\em\cite{basis}}
		For an evs $X$, if $x\in Q(X)$ then for each $y\in\downarrow x\smallsetminus X_{0}$, $L(x)=L(y)$.
\label{l:Q(X)}	\end{Lem}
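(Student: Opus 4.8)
The plan is to prove both inclusions $L(y)\subseteq L(x)$ and $L(x)\subseteq L(y)$ for an arbitrary $y\in\,\downarrow x\smallsetminus X_0$. First note that $y\leq x$ and both lie in $X\smallsetminus X_0$, so Proposition \ref{p:L(x)}(ii) immediately gives $L(x)\subseteq L(y)$; this half is free. The substantive direction is $L(y)\subseteq L(x)$, and here is where the hypothesis $x\in Q(X)$ does the work.

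For the reverse inclusion, I would take an arbitrary $z\in L(y)$ and aim to show $z\in L(x)$. By Proposition \ref{p:L(x)}(v) it suffices to show $y\in L(x)$: indeed, once $y\in L(x)$ is established, Proposition \ref{p:L(x)}(v) yields $L(y)\subseteq L(x)$ in one stroke, which is exactly the inclusion we want. So the whole lemma reduces to the single claim: \emph{if $x\in Q(X)$ and $y\in\,\downarrow x\smallsetminus X_0$, then $y\in L(x)$.} But this is now immediate from the definition of $Q(X)$ in Definition \ref{d:feasible}: $x\in Q(X)$ means precisely $(\downarrow x\smallsetminus X_0)\subseteq L(x)$, and $y$ is by hypothesis an element of $\downarrow x\smallsetminus X_0$, hence $y\in L(x)$.

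Putting the pieces together: from $x\in Q(X)$ and $y\in\,\downarrow x\smallsetminus X_0$ we get $y\in L(x)$, then Proposition \ref{p:L(x)}(v) gives $L(y)\subseteq L(x)$; combined with $L(x)\subseteq L(y)$ from Proposition \ref{p:L(x)}(ii) (applicable since $y\leq x$ with $x,y\in X\smallsetminus X_0$), we conclude $L(x)=L(y)$. There is essentially no obstacle here — the lemma is a short bookkeeping argument chaining together parts (ii) and (v) of Proposition \ref{p:L(x)} with the defining property of the feasible set. The only point requiring a line of care is confirming that $y\in X\smallsetminus X_0$ so that $L(y)$ is defined and Proposition \ref{p:L(x)}(ii) applies, but this is built into the hypothesis $y\in\,\downarrow x\smallsetminus X_0$.
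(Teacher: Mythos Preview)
Your argument is correct: the two inclusions follow exactly as you say from Proposition~\ref{p:L(x)}(ii), Definition~\ref{d:feasible}, and Proposition~\ref{p:L(x)}(v), and the check that $x,y\in X\smallsetminus X_0$ is handled. Note that the paper itself does not supply a proof of this lemma---it is quoted from \cite{basis}---so there is no in-paper argument to compare against; your chain of (ii), the definition of $Q(X)$, and (v) is the natural route and almost certainly what the original source does.
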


	\begin{Th}{\em\cite{basis}}
		An evs $X$ has a basis iff $Q(X)$ is a generator of $X\smallsetminus X_{0}$.\label{t:Q(X)generator}
	\end{Th}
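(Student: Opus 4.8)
The plan is to prove the two implications separately; the forward one is immediate and the reverse one carries all the weight. \emph{Forward implication.} Suppose $X\smallsetminus X_0$ has a basis $B$. By Result \ref{r:basefeas} every $x\in B$ satisfies $(\downarrow x\smallsetminus X_0)\subseteq L(x)$, i.e. $B\subseteq Q(X)$; since $B$ already generates $X\smallsetminus X_0$ and any superset of a generator is again a generator (the Note following Definition \ref{d:gen}), $Q(X)$ generates $X\smallsetminus X_0$.

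\emph{Reverse implication.} Assume $Q(X)$ generates $X\smallsetminus X_0$; I would build a basis directly out of $Q(X)$ by collapsing it along ``having the same testing set''. On $Q(X)$ define $x\sim y$ iff $L(x)=L(y)$; this is plainly an equivalence relation, and I take $B\subseteq Q(X)$ to be a transversal, i.e. a set meeting each $\sim$-class in exactly one point (axiom of choice). Then \textbf{(a)} $B$ generates: for $z\in X\smallsetminus X_0$ pick $x\in Q(X)$ with $z\in L(x)$; the chosen representative $b\in B$ of the class of $x$ has $L(b)=L(x)\ni z$, so $X\smallsetminus X_0=\bigcup_{b\in B}L(b)$. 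And \textbf{(b)} $B$ is orderly independent: let $b_1\ne b_2$ in $B$, so $L(b_1)\ne L(b_2)$, and suppose for contradiction that, say, $b_1\in L(b_2)$. By Proposition \ref{p:L(x)}(v) this gives $L(b_1)\subseteq L(b_2)$, hence $L(b_1)\subsetneq L(b_2)$. Unwinding $b_1\in L(b_2)$, there are $\alpha\in\K^{*}$ and $p\in X_0$ with $w:=\alpha b_2+p\leq b_1$. Now $w\in L(b_2)$ by the very definition of the testing set, so $w\notin X_0$ by Proposition \ref{p:L(x)}(iv); thus $w\in(\downarrow b_1\smallsetminus X_0)\subseteq L(b_1)$, where the inclusion is the hypothesis $b_1\in Q(X)$. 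Proposition \ref{p:L(x)}(iii) gives $L(w)=L(b_2)$, while Proposition \ref{p:L(x)}(v) applied to $w\in L(b_1)$ gives $L(w)\subseteq L(b_1)$; combining, $L(b_2)\subseteq L(b_1)$, contradicting $L(b_1)\subsetneq L(b_2)$. Hence no two elements of $B$ are orderly dependent, so by Definition \ref{d:basis} the set $B$ is a basis of $X\smallsetminus X_0$. (The degenerate case $X=X_0$ is trivial: then $X\smallsetminus X_0=\emptyset=Q(X)$ generates $\emptyset$, and $\emptyset$ is a basis.)

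\emph{Main obstacle.} The only real difficulty is step \textbf{(b)}. Passing to a transversal of $\sim$ removes the ``horizontal'' redundancy but not obviously the ``vertical'' one: among transversal elements one could a priori have $L(b_1)\subsetneq L(b_2)$, and the configuration $b_1\in L(b_2)$ is precisely what would break orderly independence. What rescues the argument is the feasibility condition $b_1\in Q(X)$, which forces $(\downarrow b_1\smallsetminus X_0)\subseteq L(b_1)$ and thereby lets one push the element $w=\alpha b_2+p$ (which lies below $b_1$) into $L(b_1)$ and conclude $L(b_2)=L(w)\subseteq L(b_1)$ — the reverse containment that yields the contradiction. Everything else is routine bookkeeping with Proposition \ref{p:L(x)}.
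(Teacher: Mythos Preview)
Your proof is correct. Note, however, that the present paper does not itself prove Theorem~\ref{t:Q(X)generator}: it is quoted from \cite{basis} as a preliminary result, so there is no in-paper proof to compare against directly. That said, your argument is exactly the template the paper deploys in the closely related Theorem~\ref{t:basis} (for the concrete evs $\mathcal{D}(\mathbf X)$): define $\sim$ on the feasible set by equality of testing sets, pick a transversal via choice, and show the transversal is a basis. The only cosmetic difference is that the paper's version of step~\textbf{(b)} invokes Lemma~\ref{l:Q(X)} (``$x\in Q(X)$ and $y\in\downarrow x\smallsetminus X_0$ imply $L(x)=L(y)$'') to conclude $L(b_1)=L(w)=L(b_2)$ in one stroke, whereas you unpack that lemma inline using Proposition~\ref{p:L(x)}(iii)--(v). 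Both routes are equivalent; yours is slightly more self-contained, the paper's slightly shorter.
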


	\begin{Th}{\em\cite{basis}}
		For an evs $X$, every maximal orderly independent set of $Q(X)$ is a basis of $X\smallsetminus X_{0}$, provided $Q(X)$ generates $X\smallsetminus X_{0}$.
\label{t:maxbasis}\end{Th}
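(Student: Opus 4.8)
The plan is to take a maximal orderly independent subset $B$ of $Q(X)$ and verify the two requirements in Definition \ref{d:basis}. Orderly independence of $B$ is part of the hypothesis, so the entire task is to show that $B$ generates $X\smallsetminus X_0$. Since $Q(X)$ is assumed to be a generator, $X\smallsetminus X_0=\bigcup_{q\in Q(X)}L(q)$, and conversely $\bigcup_{b\in B}L(b)\subseteq X\smallsetminus X_0$ by Proposition \ref{p:L(x)}(iv). Hence it suffices to prove that for every $q\in Q(X)$ there is some $b\in B$ with $L(q)\subseteq L(b)$.

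I would fix $q\in Q(X)$ and split into cases. If $q\in B$, take $b=q$. Otherwise $q\in Q(X)\smallsetminus B$, and maximality of $B$ forces $B\cup\{q\}$ to fail orderly independence; thus there is $b\in B$ such that $b$ and $q$ are orderly dependent, that is, $q\in L(b)$ or $b\in L(q)$. If $q\in L(b)$, then Proposition \ref{p:L(x)}(v) gives $L(q)\subseteq L(b)$ at once.

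The delicate case is $b\in L(q)$, where the orderly dependence runs opposite to what is needed, so that Proposition \ref{p:L(x)}(v) only yields the unhelpful inclusion $L(b)\subseteq L(q)$. Here I would unwind the definition of $L(q)$: there are $\alpha\in\K^{*}$ and $p\in X_0$ with $b\geq\alpha q+p$. Putting $q':=\alpha q+p$, Proposition \ref{p:L(x)}(iii) gives $q'\in X\smallsetminus X_0$ and $L(q')=L(q)$, while $q'\leq b$ shows $q'\in\,\downarrow b\smallsetminus X_0$. Because $b\in Q(X)$, Lemma \ref{l:Q(X)} applied with $x=b$ and $y=q'$ promotes this to $L(b)=L(q')=L(q)$, and in particular $L(q)\subseteq L(b)$. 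In every case we obtain some $b\in B$ with $L(q)\subseteq L(b)$, so $B$ generates $X\smallsetminus X_0$ and is therefore a basis.

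The main obstacle is precisely this last case: orderly dependence alone does not decide which of $L(q)$, $L(b)$ contains the other, and the only leverage for turning the inclusion around is the feasibility of $b$ — its membership in $Q(X)$ — together with Lemma \ref{l:Q(X)}. Everything else is straightforward bookkeeping with the elementary properties of testing sets recorded in Proposition \ref{p:L(x)}.
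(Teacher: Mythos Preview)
Your argument is correct. The paper itself does not prove this theorem; it is quoted from \cite{basis} without proof, so there is no in-paper argument to compare against. Your route --- reducing generation to ``for every $q\in Q(X)$ there is $b\in B$ with $L(q)\subseteq L(b)$'', then using maximality to produce an orderly dependent pair and invoking Lemma~\ref{l:Q(X)} (with Proposition~\ref{p:L(x)}(iii)) to flip the troublesome inclusion $L(b)\subseteq L(q)$ into equality --- is exactly the standard one and matches how the result is used later in the paper (compare the proof of Theorem~\ref{t:basis}, which carries out the same flip).

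One cosmetic remark: when you write ``Proposition~\ref{p:L(x)}(iii) gives $q'\in X\smallsetminus X_0$'', that proposition does not state this explicitly; it is implicit because $L(\cdot)$ is only defined on $X\smallsetminus X_0$. If you want to be fully self-contained, note that $q'=\alpha q+p\in X_0$ would force $\alpha q=q'+(-1)p\in X_0$ (using $p+(-1)p=\theta$ from axiom~$A_5$) and hence $q\in X_0$, a contradiction. Otherwise the proof needs no change.
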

	
	Now we want to discuss the comparability of two metrics on a nonempty set $ \mathbf X $ in terms of orderly dependence. This necessitates finding of basis of $\mathcal{D}(\mathbf{X})\smallsetminus\{O\}$. So it becomes indispensable to determine the feasible set $ Q\big(\mathcal D(\mathbf X)\big) $.
	
	Let us use the notation $\rho_{_d}$ for the discrete metric on $\mathbf{X}$. Further we use symbol $\rho_{_u}$ for denoting usual metric on $\R$. We shall show that these two metrics on $\R$ are orderly independent.
	
	\begin{Res}
		Let $\rho\in\mathcal{D}(\mathbf{X})\smallsetminus\{O\}$ be any bounded metric. Define $\rho_{_b}:\mathbf{X}\times\mathbf{X}\longrightarrow[0,\infty)$ by $\rho_{_b}(x,y):=\frac{\rho(x,y)}{1+\rho(x,y)}$ for all $x,y\in\mathbf{X}$. Then $\rho\in L(\rho_{_b})$ and $\rho_{_b}\in L(\rho)$.
	\end{Res}

	\begin{proof}
		Clearly, $\rho_{_b}\not=O$. Now $\rho_{_b}(x,y)=\frac{\rho(x,y)}{1+\rho(x,y)}\leq \rho(x,y)$ for all $x,y\in\mathbf{X}$. So $\rho_{_b}\leq \rho$. Hence $\rho\in L(\rho_{_b})$.
		
		Since $\rho$ is a bounded metric, there exists $M>1$ such that $\rho(x,y)\leq M-1$ for all $x,y\in\mathbf{X}$. So $ \frac{1}{1+ \rho(x,y)}\geq \frac{1}{M}$ for all $x,y\in\mathbf{X}$. Therefore $\frac{1}{M}$ $\rho(x,y)\leq\frac{\rho(x,y)}{1+\rho(x,y)}=\rho_{_b}(x,y)$ for all $x,y\in\mathbf{X}$ and hence $\frac{1}{M}\cdot \rho\leq \rho_{_b}$. Thus $\rho_{_b}\in L(\rho)$.
	\end{proof}

\begin{Rem}
	For any metric $\rho$ on $\mathbf{X}$, the metric $\rho_{_b}$ is orderly dependent with the metric $\rho$.
\end{Rem}

	\begin{Th}
		Let $\rho\in\mathcal{D}(\mathbf{X})\smallsetminus\{O\}$ be any bounded metric. Define $\rho_{_{\min}}:\mathbf{X}\times\mathbf{X}\longrightarrow[0,\infty)$ by $\rho_{_{\min}}(x,y):=\min\{1,\rho(x,y)\}$ for all $x,y\in\mathbf{X}$. Then $\rho_{_{\min}}\in L(\rho)$ and $\rho\in L(\rho_{_{\min}})$.
\label{t:min}	\end{Th}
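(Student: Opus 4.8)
The plan is to exploit that the primitive space of $\mathcal{D}(\mathbf{X})$ is trivial. Since $[\mathcal{D}(\mathbf{X})]_0=\{O\}$ and scalar multiplication acts by $\alpha\cdot\sigma=|\alpha|\,\sigma$, for every $\sigma\in\mathcal{D}(\mathbf{X})\smallsetminus\{O\}$ the testing set reduces to $L(\sigma)=\big\{\tau\in\mathcal{D}(\mathbf{X}):\tau\geq c\,\sigma\ \text{for some}\ c>0\big\}$. Thus the statement amounts to producing, in each direction, a single positive constant witnessing the required domination, together with the preliminary check that $\rho_{_{\min}}$ is genuinely a member of $\mathcal{D}(\mathbf{X})\smallsetminus\{O\}$.

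First I would verify that $\rho_{_{\min}}\in\mathcal{D}(\mathbf{X})\smallsetminus\{O\}$. Symmetry and the property $\rho_{_{\min}}(x,y)=0\iff x=y$ are inherited verbatim from $\rho$, and $\rho_{_{\min}}\neq O$ because $\rho\neq O$. The only point requiring an argument is the triangle inequality, which I would deduce from the elementary fact that for $a,b,c\geq 0$ with $a\leq b+c$ one has $\min\{1,a\}\leq\min\{1,b\}+\min\{1,c\}$ (if $b\geq1$ or $c\geq1$ the right-hand side is at least $1\geq\min\{1,a\}$; otherwise it equals $b+c\geq a\geq\min\{1,a\}$), applied with $a=\rho(x,z)$, $b=\rho(x,y)$, $c=\rho(y,z)$.

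For the inclusion $\rho\in L(\rho_{_{\min}})$, observe that $\min\{1,\rho(x,y)\}\leq\rho(x,y)$ for all $x,y\in\mathbf{X}$, so $\rho\geq\rho_{_{\min}}=1\cdot\rho_{_{\min}}$ and the constant $c=1$ suffices; this direction uses no boundedness. For $\rho_{_{\min}}\in L(\rho)$ I would invoke boundedness of $\rho$: if $\rho(x,y)\leq 1$ for all $x,y$ then $\rho_{_{\min}}=\rho$ and there is nothing to prove, so assume $\rho$ is not bounded by $1$ and fix $M>1$ with $\rho(x,y)\leq M$ for all $x,y$ (possible since $\rho$ is bounded). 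Then I claim $\rho_{_{\min}}\geq\tfrac1M\,\rho$: for fixed $x,y$, if $\rho(x,y)\leq1$ then $\rho_{_{\min}}(x,y)=\rho(x,y)\geq\rho(x,y)/M$ since $M>1$, while if $\rho(x,y)>1$ then $\rho_{_{\min}}(x,y)=1\geq\rho(x,y)/M$ since $\rho(x,y)\leq M$. Hence $\tfrac1M\cdot\rho\leq\rho_{_{\min}}$, which gives $\rho_{_{\min}}\in L(\rho)$.

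The argument is essentially routine; the only mild subtlety is the triangle inequality for the truncated metric, and the one place the hypothesis genuinely enters is the uniform constant $1/M$ in the last step, since without boundedness the direction $\rho_{_{\min}}\in L(\rho)$ can fail, though $\rho\in L(\rho_{_{\min}})$ (hence orderly dependence of the pair) would still survive.
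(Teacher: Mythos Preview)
Your proposal is correct and follows essentially the same route as the paper: both directions are obtained by exhibiting a single positive constant, namely $c=1$ for $\rho\in L(\rho_{_{\min}})$ via $\rho_{_{\min}}\leq\rho$, and a reciprocal bound from boundedness for $\rho_{_{\min}}\in L(\rho)$. The only cosmetic difference is that the paper picks $0<M\leq 1$ with $\rho\leq 1/M$ so that $M\rho\leq 1$ and $M\rho\leq\rho$ give $M\rho\leq\rho_{_{\min}}$ without a case split, whereas you take $M>1$ and argue pointwise; your additional verification of the triangle inequality for $\rho_{_{\min}}$ is more careful than the paper, which treats this as known.
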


	\begin{proof}
		Clearly $\rho_{_{\min}}\not=O$. Then $\rho_{_{\min}}(x,y)=\min\{1,\rho(x,y)\}\leq\rho(x,y)$ for all $x,y\in\mathbf{X}$ i.e. $\rho_{_{\min}}\leq \rho$. Hence $\rho\in L(\rho_{_{\min}})$. 
		
		Since $\rho$ is bounded, there exists $0<M\leq 1$ such that $\rho(x,y)\leq\frac{1}{M}$ for all $x,y\in\mathbf{X}$. Therefore $M\rho(x,y)\leq 1$ for all $x,y\in\mathbf{X}$ and $M\rho(x,y)\leq\rho(x,y)$ for all $x,y\in\mathbf{X}$ (since $M\leq 1$). So $M\rho(x,y)\leq\min\{1,\rho(x,y)\}=\rho_{_{\min}}(x,y)$ for all $x,y\in\mathbf{X}$. Hence $M\cdot\rho\leq\rho_{_{\min}}$, where $M>0$. This implies $\rho_{_{\min}}\in L(\rho)$. 
	\end{proof}

	\begin{Res}
		The discrete metric $\rho_{_d}$ and usual metric $\rho_{_u}$ on $\R$ are orderly independent.
\label{r:usudis}\end{Res} 

	\begin{proof}
		If possible let $\rho_{_u}\in L(\rho_{_d})$. Then there exists $\alpha\in\K^{*}$ such that $\rho_{_u}\geq\alpha\cdot \rho_{_d} + O$. Therefore $\rho_{_u}(x,y)\geq |\alpha|\rho_{_d}(x,y)$ for all $x,y\in\R$. Hence $\displaystyle\inf_{\underset{x\not=y}{x,y\in\R}} \rho_{_u}(x,y)\geq|\alpha|>0$, which is a contradiction. Therefore $\rho_{_u}\not\in L(\rho_{_d})$.
		
		Now if $\rho_{_d}\in L(\rho_{_u})$, then there exists $\beta\in\K^{*}$ such that $\rho_{_d}\geq \beta\cdot \rho_{_u}+O$. So $ \rho_{_d}(x,y)\geq |\beta|\rho_{_u}(x,y)$ for all $x,y\in\R$. Therefore $ \rho_{_u}(x,y)\leq\frac{1}{|\beta|}$ for all $x,y\in\R$, which is a contradiction since $\rho_{_u}$ is an unbounded metric. Hence $\rho_{_d}\not\in L(\rho_{_u})$.
		
		Therefore $\rho_{_d}$ and $\rho_{_u}$ are orderly independent.
	\end{proof}
	
	\begin{Rem}
		If there exists an unbounded metric $d$ on $\mathbf{X}$ such that $\displaystyle\inf_{\underset{x\not=y}{x,y\in\mathbf{X}}} d(x,y)=0$ and a bounded metric $\rho$ such that $\displaystyle\inf_{\underset{x\not=y}{x,y\in\mathbf{X}}} \rho(x,y)>0$, then $d$ and $\rho$ are orderly independent. 
	\end{Rem}

We now discuss the structure of $ Q\big(\mathcal D(\mathbf X)\big) $. Although we are not able to find the exact structure of $ Q\big(\mathcal D(\mathbf X)\big) $ we can find some clue that is helpful to determine a basis of $ \mathcal{ D(\mathbf X)}\smallsetminus\{O\} $. Then using the structure of basis we can compare two metrics in terms of orderly dependence.

	\begin{Lem}
		Let $A:=\big\{\rho\in\mathcal{D}(\mathbf{X}):\rho\text{ is an unbounded metric}\big\}$. Then $A\cap Q\big(\mathcal{D}(\mathbf{X})\big)=\emptyset$.\label{l:unbdnotfeas}
	\end{Lem}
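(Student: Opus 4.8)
The plan is to show that for an unbounded metric $\rho$ the inclusion $\downarrow\rho\smallsetminus\{O\}\subseteq L(\rho)$ fails, so $\rho$ cannot belong to the feasible set $Q\big(\mathcal{D}(\mathbf{X})\big)$. First I would translate the abstract notions into the present function–space setting. Since $[\mathcal{D}(\mathbf{X})]_0=\{O\}$ and $\alpha\rho=|\alpha|\rho$ for every scalar $\alpha$, the testing set of a metric $\rho\neq O$ simplifies to
\[
L(\rho)=\big\{\eta\in\mathcal{D}(\mathbf{X}):\eta\geq c\,\rho\ \text{for some real}\ c>0\big\},
\]
and, by Definition \ref{d:feasible}, $\rho\in Q\big(\mathcal{D}(\mathbf{X})\big)$ means exactly that every metric $\eta$ with $O\neq\eta\leq\rho$ already satisfies $\eta(x,y)\geq c\,\rho(x,y)$ for all $x,y\in\mathbf{X}$, for some fixed $c>0$. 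Thus it suffices, for a given unbounded metric $\rho$, to produce a single metric $\eta\leq\rho$ with $\eta\neq O$ for which no such constant $c$ can exist.

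The candidate I would use is the truncation $\eta:=\min\{1,\rho\}$, i.e. $\eta(x,y):=\min\{1,\rho(x,y)\}$ for all $x,y\in\mathbf{X}$; this is the same device as in Theorem \ref{t:min}, but now applied without assuming $\rho$ bounded. A routine verification shows $\eta$ is again a metric on $\mathbf{X}$: symmetry and positivity are immediate, and the triangle inequality splits into the case where one of the two summands $\eta(x,y),\eta(y,z)$ is $\geq 1$ (then the right side is $\geq 1\geq\eta(x,z)$) and the case where both are $<1$ (then the right side equals $\rho(x,y)+\rho(y,z)\geq\rho(x,z)\geq\eta(x,z)$). Moreover $\eta\leq\rho$ pointwise, and $\eta\neq O$ because $\rho\neq O$; hence $\eta\in\downarrow\rho\smallsetminus\{O\}$.

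Finally I would show $\eta\notin L(\rho)$. Suppose, to the contrary, that $\eta\geq c\,\rho$ for some $c>0$. Since $\rho$ is unbounded, there is a pair $x\neq y$ in $\mathbf{X}$ with $\rho(x,y)>1/c$; for such a pair $\eta(x,y)=\min\{1,\rho(x,y)\}=1$ while $c\,\rho(x,y)>1$, contradicting $\eta(x,y)\geq c\,\rho(x,y)$. Therefore no positive $c$ works, so $\eta\notin L(\rho)$, whence $\downarrow\rho\smallsetminus\{O\}\not\subseteq L(\rho)$ and $\rho\notin Q\big(\mathcal{D}(\mathbf{X})\big)$. As $\rho\in A$ was arbitrary, $A\cap Q\big(\mathcal{D}(\mathbf{X})\big)=\emptyset$ (and if $\mathbf{X}$ admits no unbounded metric the statement is vacuous). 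The argument is short, so I do not anticipate a genuine obstacle; the only places that need a little care are the faithful rewriting of $L(\cdot)$ and $Q(\cdot)$ in this concrete evs and the check that $\min\{1,\rho\}$ is a metric, with unboundedness of $\rho$ entering exactly once, to guarantee that the truncation is "flat" on arbitrarily large values of $\rho$.
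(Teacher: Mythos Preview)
Your proof is correct and follows the same strategy as the paper: exhibit a bounded metric below $\rho$ that cannot dominate any positive multiple of the unbounded $\rho$. The only difference is the choice of witness---you use the truncation $\min\{1,\rho\}$ whereas the paper uses $\rho_{b}=\rho/(1+\rho)$---and one cosmetic point: to guarantee $\eta(x,y)=1$ you should pick the pair with $\rho(x,y)>\max\{1,1/c\}$ (which unboundedness certainly allows), not merely $\rho(x,y)>1/c$.
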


	\begin{proof} Let $\rho\in A$. We claim that $\downarrow \rho\smallsetminus\{O\}\not\subseteq L(\rho)$. Consider the metric $ \rho_{_b}(x,y):=\frac{\rho(x,y)}{1+\rho(x,y)} $ for all $x,y\in\mathbf{X}$. 
		Clearly $\rho_{_b}\not=O$ and $\rho_{_b}(x,y)\leq \rho(x,y)$ for all $x,y\in\mathbf{X}$ which implies $\rho_{_b}\leq \rho$ and hence $\rho_{_b}\in\downarrow \rho\smallsetminus\{O\}$. We now show below that $ \rho_{_b}\notin L(\rho) $.
		
		If possible, let $\rho_{_b}\in L(\rho)$. Then $\exists\,\alpha\in\K^{*}$ such that $\rho_{_b}\geq \alpha\cdot \rho+O$. So $\rho_{_b}(x,y)\geq |\alpha|\rho(x,y)$, $\forall\,x,y\in\mathbf{X}$. Again $\rho_{_b}(x,y)=\frac{\rho(x,y)}{1+\rho(x,y)}\leq 1$, $\forall\,x,y\in\mathbf{X}$. Thus $\rho(x,y)\leq\frac{1}{|\alpha|}$, $\forall\,x,y\in\mathbf{X}$, which contradicts that $\rho$ is unbounded. Hence $\rho_{_b}\not\in L(\rho)$.
		
		Therefore $\downarrow \rho\smallsetminus\{O\}\not\subseteq L(\rho)$ and hence $\rho\not\in Q\big(\mathcal{D}(\mathbf{X})\big)$ (see Definition \ref{d:feasible}). This implies $A\cap Q\big(\mathcal{D}(\mathbf{X})\big)=\emptyset$.
	\end{proof}

		\begin{Th}
		If $\mathcal{D}(\mathbf{X})\smallsetminus\{O\}$ has a basis $B$, then $B$ does not contain any unbounded metric.
	\end{Th}

	\begin{proof}
		It follows from Lemma \ref{l:unbdnotfeas} and Remark \ref{rm:basefeas}.
	\end{proof}
	
	\begin{Th}
		$\mathcal{D}(\mathbf{X})\smallsetminus\{O\}$ has a basis, if there does not exist any metric $d\in\mathcal{D}(\mathbf{X})$ such that $\displaystyle\inf_{\underset{x\not=y}{x,y\in\mathbf{X}}} d(x,y)=0$.\end{Th}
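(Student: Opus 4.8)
The plan is to apply Theorem~\ref{t:Q(X)generator}: an evs $X$ has a basis iff its feasible set $Q(X)$ generates $X\smallsetminus X_0$. Since $[\mathcal{D}(\mathbf{X})]_0=\{O\}$, it therefore suffices to show that $Q\big(\mathcal{D}(\mathbf{X})\big)$ is a generator of $\mathcal{D}(\mathbf{X})\smallsetminus\{O\}$, i.e. that every metric $\eta\in\mathcal{D}(\mathbf{X})\smallsetminus\{O\}$ lies in $L(\rho)$ for some $\rho\in Q\big(\mathcal{D}(\mathbf{X})\big)$; the reverse inclusion $\bigcup_{\rho\in Q}L(\rho)\subseteq\mathcal{D}(\mathbf{X})\smallsetminus\{O\}$ is automatic from Proposition~\ref{p:L(x)}(iv).

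The key observation I would isolate first is that, \emph{under the hypothesis}, every bounded metric belongs to $Q\big(\mathcal{D}(\mathbf{X})\big)$. Indeed, let $\rho$ be a metric with $\rho(x,y)\leq M$ for all $x,y$, and let $\zeta\in\mathcal{D}(\mathbf{X})$ with $O\neq\zeta\leq\rho$. Since $\zeta$ is a metric on $\mathbf{X}$, the hypothesis gives $m:=\inf_{x\neq y}\zeta(x,y)>0$, whence $\zeta(x,y)\geq m\geq\frac{m}{M}\,\rho(x,y)$ for all $x,y$ (both sides being $0$ when $x=y$, and $\frac{m}{M}\rho(x,y)\leq m$ otherwise); thus $\zeta\geq\frac{m}{M}\cdot\rho$ with $\frac{m}{M}\in\K^{*}$, so $\zeta\in L(\rho)$. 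Hence $\downarrow\rho\smallsetminus\{O\}\subseteq L(\rho)$, i.e. $\rho\in Q\big(\mathcal{D}(\mathbf{X})\big)$.

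To finish, given an arbitrary $\eta\in\mathcal{D}(\mathbf{X})\smallsetminus\{O\}$, I would take the truncation $\eta_{\min}(x,y):=\min\{1,\eta(x,y)\}$ from Theorem~\ref{t:min} (a routine check of the triangle inequality, splitting on whether the right-hand side is $\geq 1$, shows it is a metric, and clearly $\eta_{\min}\neq O$). It is bounded by $1$, hence $\eta_{\min}\in Q\big(\mathcal{D}(\mathbf{X})\big)$ by the previous paragraph; and $\eta_{\min}\leq\eta$ pointwise, so $\eta\geq 1\cdot\eta_{\min}+O$, i.e. $\eta\in L(\eta_{\min})$. Thus every $\eta\in\mathcal{D}(\mathbf{X})\smallsetminus\{O\}$ lies in $L(\rho)$ for some $\rho\in Q\big(\mathcal{D}(\mathbf{X})\big)$, so $Q\big(\mathcal{D}(\mathbf{X})\big)$ generates $\mathcal{D}(\mathbf{X})\smallsetminus\{O\}$ and Theorem~\ref{t:Q(X)generator} yields the desired basis.

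The whole argument hinges on the middle step — that bounded metrics are feasible — and this is exactly where the hypothesis is indispensable: it supplies the uniform positive lower bound $m$ that lets a sub-metric $\zeta$ be scaled up past $\rho$; without it (e.g. when some $\zeta\leq\rho$ has infimum $0$) the scaling fails and $\rho$ need not be feasible, in line with Lemma~\ref{l:unbdnotfeas} and the general difficulty of pinning down $Q\big(\mathcal{D}(\mathbf{X})\big)$. Everything else is bookkeeping with the testing sets. (As an aside not needed for the proof: the hypothesis is in fact equivalent to $\mathbf{X}$ being finite, since on any infinite $\mathbf{X}$ one can build a metric from a sequence of distinct points whose mutual distances tend to $0$.)
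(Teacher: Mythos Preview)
Your proof is correct, but it takes a noticeably different and more elaborate route than the paper's. The paper simply exhibits an explicit one-element basis: for any $\rho\in\mathcal{D}(\mathbf{X})\smallsetminus\{O\}$, the hypothesis gives $a:=\inf_{x\neq y}\rho(x,y)>0$, whence $a\cdot\rho_{_d}\leq\rho$ and so $\rho\in L(\rho_{_d})$; thus $\{\rho_{_d}\}$ is a basis, and the authors immediately read off $\dim\mathcal{D}(\mathbf{X})=[1:0]$. You instead go through the feasibility machinery of Theorem~\ref{t:Q(X)generator}: first showing every bounded metric is feasible (using the hypothesis to get a positive lower bound on any sub-metric), then reducing an arbitrary $\eta$ to its bounded truncation $\eta_{\min}$, and concluding that $Q\big(\mathcal{D}(\mathbf{X})\big)$ generates. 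This is sound but less economical, and it only establishes \emph{existence} of a basis without identifying it or its cardinality; the paper's direct construction yields strictly more information with less work. Your aside that the hypothesis forces $\mathbf{X}$ to be finite is a nice observation the paper also makes (just after this theorem), and would in fact let you skip the truncation step entirely, since on a finite set every metric is already bounded.
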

	
	\begin{proof}
		Let $\rho\in\mathcal{D}(\mathbf{X})\smallsetminus\{O\}$. Then $\displaystyle\inf_{\underset{x\not=y}{x,y\in\mathbf{X}}} \rho(x,y)=:a>0$ (by hypothesis). Thus $a\leq \rho(x,y)$ for all $x,y\in\mathbf{X}$ with $x\not=y$. Therefore $a \rho_{_d}(x,y)\leq \rho(x,y)$ for all $x,y\in\mathbf{X}$. So $a\cdot\rho_{_d} + O\leq \rho$ which implies $\rho\in L(\rho_{_d})$. Hence $ B=\{\rho_{_d}\}$ is a basis of $\mathcal{D}(\mathbf{X})\smallsetminus\{O\}$. 
		
		The primitive space of $\mathcal{D}(\mathbf{X})$ is $[\mathcal{D}(\mathbf{X})]_{0}=\{O\}$. Hence the primitive space does not have a basis. 
				
		 Therefore $\text{dim }\mathcal{D}(\mathbf{X})=\big[\text{dim }\big(\mathcal{D}(\mathbf{X})\smallsetminus[\mathcal{D}(\mathbf{X})]_{0}\big):\text{dim }[\mathcal{D}(\mathbf{X})]_{0}\big]=[1:0]$.		\end{proof}
		
We now discuss the case when there is a metric $d\in\mathcal{D}(\mathbf{X})$ such that $\displaystyle\inf_{\underset{x\not=y}{x,y\in\mathbf{X}}} d(x,y)=0$. Such metric will be called `\textit{shrinking}'. If $ \mathbf X=\R $ then the usual metric is shrinking whereas the discrete metric is not shrinking. Thus from above theorem we can say that if there is no shrinking metric on $ \mathbf{X} $ then $ \mathcal{D}(\mathbf X)\smallsetminus\{O\} $ possesses a basis, namely $ \{\rho_{_d}\} $, which means that any metric on such $ \mathbf{X} $ can be compared with the discrete metric. Such $ \mathbf X $ exists; in fact, there is no shrinking metric on any finite set $ \mathbf{X} $. On the other hand, if $ \mathbf X $ is infinite then we can construct a metric which is shrinking, as the following example shows.

\begin{Ex}
	Let $ \mathbf X $ be an infinite set and $ A:=\{x_n\in\mathbf X:n\in\N\} $ be a countably infinite subset of $ \mathbf X $. Define a metric on $ \mathbf X $ as follows:\\
	$ d(x,y)=\begin{cases}
		|\frac{1}{n}-\frac{1}{m}|,\text{ if }x=x_n,y=x_m\in A\\
		1,\text{ if }x\neq y\text{ and at least one of }x,y\text{ does not belong to }A\\
		0,\text{ otherwise}
	\end{cases} $\\
This metric $ d $ is shrinking since $\displaystyle\inf_{\underset{x\not=y}{x,y\in\mathbf{X}}} d(x,y)=0$. \qed
\end{Ex}

Thus our finding aims now to discuss the existence of basis of $\mathcal{D}(\mathbf{X})\smallsetminus\{O\}$ when $ \mathbf X $ is an infinite set. The discussion through the following Theorems and Lemma will show the existence of a generator of $\mathcal{D}(\mathbf{X})\smallsetminus\{O\}$ considering the presence of a shrinking metric on $ \mathbf X $ (Theorem \ref{t:generator}); this will finally culminate into the existence of a basis of $\mathcal{D}(\mathbf{X})\smallsetminus\{O\}$ under some condition (Theorem \ref{t:basis}).

	\begin{Th}
		Suppose there is a metric $d\in\mathcal{D}(\mathbf{X})$ such that $\displaystyle\inf_{\underset{x\not=y}{x,y\in\mathbf{X}}}d(x,y)=0$. If $\mathcal{D}(\mathbf{X})\smallsetminus\{O\}$ possesses a basis $B$, then the discrete metric $\rho_{_d}\not \in B$. 
	\end{Th}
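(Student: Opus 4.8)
The plan is to show that the discrete metric $\rho_{_d}$ fails to belong to the feasible set $Q\big(\mathcal{D}(\mathbf{X})\big)$; once this is done the conclusion is immediate. Indeed, by Result \ref{r:basefeas} (equivalently Remark \ref{rm:basefeas}), if $B$ is a basis of $\mathcal{D}(\mathbf{X})\smallsetminus\{O\}$ then $B\subseteq Q\big(\mathcal{D}(\mathbf{X})\big)$, so $\rho_{_d}\notin Q\big(\mathcal{D}(\mathbf{X})\big)$ forces $\rho_{_d}\notin B$.

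So the task reduces to producing, in the presence of a shrinking metric $d$, a metric $e$ with $O\neq e\leq\rho_{_d}$ but $e\notin L(\rho_{_d})$; by Definition \ref{d:feasible} this witnesses $\downarrow\rho_{_d}\smallsetminus\{O\}\not\subseteq L(\rho_{_d})$, i.e. $\rho_{_d}\notin Q\big(\mathcal{D}(\mathbf{X})\big)$. The candidate I would use is the truncation $e(x,y):=\min\{1,d(x,y)\}$ for all $x,y\in\mathbf{X}$. A routine check shows $e$ is again a metric on $\mathbf{X}$, and $e\neq O$ since $d\neq O$ (the existence of a shrinking metric forces $\mathbf{X}$ to be infinite, in particular $|\mathbf{X}|\geq 2$, so $d$ takes a positive value). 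Since $e(x,y)\leq 1=\rho_{_d}(x,y)$ whenever $x\not=y$, while $e(x,x)=0=\rho_{_d}(x,x)$, we get $e\leq\rho_{_d}$, hence $e\in\downarrow\rho_{_d}\smallsetminus\{O\}$. The crucial feature of this choice is that $\displaystyle\inf_{\underset{x\not=y}{x,y\in\mathbf{X}}}e(x,y)=\min\big\{1,\ \inf_{\underset{x\not=y}{x,y\in\mathbf{X}}}d(x,y)\big\}=0$.

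It then remains to verify $e\notin L(\rho_{_d})$, which goes exactly as in the arguments of Lemma \ref{l:unbdnotfeas} and Result \ref{r:usudis}: were $e\in L(\rho_{_d})$, there would be $\alpha\in\K^{*}$ with $e\geq\alpha\cdot\rho_{_d}+O$, i.e. $e(x,y)\geq|\alpha|\,\rho_{_d}(x,y)=|\alpha|$ for all $x\not=y$, whence $\inf_{x\not=y}e(x,y)\geq|\alpha|>0$, contradicting the identity displayed above. This completes the proof.

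I do not expect a genuine obstacle here; the only point demanding a little care is the construction of $e$. One cannot simply take the pointwise minimum of $d$ and $\rho_{_d}$, since the pointwise minimum of two metrics need not satisfy the triangle inequality; the truncation $\min\{1,d(\cdot,\cdot)\}$ is used precisely because it remains a metric while simultaneously securing $e\leq\rho_{_d}$ and $\inf_{x\not=y}e(x,y)=0$. (If one prefers, $e(x,y):=\dfrac{d(x,y)}{1+d(x,y)}$ works equally well, by the same reasoning as in Lemma \ref{l:unbdnotfeas}.)
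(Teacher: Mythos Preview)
Your proof is correct and follows essentially the same approach as the paper: exhibit a nonzero metric below $\rho_{_d}$ with zero infimum over $x\neq y$, then conclude via Result \ref{r:basefeas} that $\rho_{_d}\notin Q\big(\mathcal{D}(\mathbf{X})\big)$ and hence $\rho_{_d}\notin B$. The only difference is cosmetic---the paper uses the alternative witness $\widehat{\rho_{_b}}(x,y)=\dfrac{d(x,y)}{1+d(x,y)}$ that you mention in your final parenthetical, rather than $\min\{1,d(x,y)\}$.
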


	\begin{proof}		
		Consider the metric $\widehat{\rho_{_b}}$ defined by $\widehat{\rho_{_b}}(x,y)=:\frac{d(x,y)}{1+d(x,y)}$ for all $x,y\in\mathbf{X}$. It satisfies the following condition: $\widehat{\rho_{_b}}(x,y)\leq d(x,y)$ for all $x,y\in\mathbf{X}$. Hence
		$\widehat{\rho_{_b}}\leq d\text{ but }\widehat{\rho_{_b}}\not=O$.	 Also $\widehat{\rho_{_b}}(x,y)\leq 1$ for all $x,y\in\mathbf{X}$. Then $\widehat{\rho_{_b}}\leq \rho_{_d}$ i.e. $\widehat{\rho_{_b}}\in \downarrow \rho_{_d}\smallsetminus\{O\}$. If $\widehat{\rho_{_b}}\in L(\rho_{_d})$, there exists $\beta\in\K^{*}$ such that $\widehat{\rho_{_b}}\geq\beta\cdot\rho_{_d} + O$ which implies $\widehat{\rho_{_b}}(x,y)\geq |\beta|\rho_{_d}(x,y)=|\beta|$ for all $x,y\in\mathbf{X}$ with $x\not=y$. Therefore $\displaystyle\inf_{\underset{x\not=y}{x,y\in\mathbf{X}}} \widehat{\rho_{_b}}(x,y)\geq |\beta|>0$, which is a contradiction because $\displaystyle\inf_{\underset{x\not=y}{x,y\in\mathbf{X}}} \widehat{\rho_{_b}}(x,y)\leq \displaystyle\inf_{\underset{x\not=y}{x,y\in\mathbf{X}}} d(x,y)=0$. This implies $\widehat{\rho_{_b}}\not\in L(\rho_{_d})$. Therefore $\downarrow\rho_{_d}\smallsetminus\{O\}\not\subseteq L(\rho_{_d})$. Hence by Result \ref{r:basefeas}, $\rho_{_d}\not\in B$.   	
	\end{proof}

	\begin{Lem}
			Suppose there is a metric $d\in\mathcal{D}(\mathbf{X})$ such that $\displaystyle\inf_{\underset{x\not=y}{x,y\in\mathbf{X}}}d(x,y)=0$. Define\\ $E:=\big\{\rho\in \mathcal{D}(\mathbf{X}): \rho \text{ is bounded and }\displaystyle\inf_{\underset{x\not=y}{x,y\in\mathbf{X}}} \rho(x,y)>0\big\}$. Then $ E\cap Q\big(\mathcal{D}(\mathbf{X})\big)=\emptyset$.\label{l:bdmetinfpos}
	\end{Lem}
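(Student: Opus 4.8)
The plan is to show that no $\rho\in E$ can lie in $Q\big(\mathcal{D}(\mathbf{X})\big)$, by producing, for each such $\rho$, a metric $\sigma$ with $O\neq\sigma\leq\rho$ but $\sigma\notin L(\rho)$. This gives $\downarrow\rho\smallsetminus\{O\}\not\subseteq L(\rho)$, hence $\rho\notin Q\big(\mathcal{D}(\mathbf{X})\big)$ by Definition \ref{d:feasible}, and since $\rho\in E$ is arbitrary, $E\cap Q\big(\mathcal{D}(\mathbf{X})\big)=\emptyset$. The argument is a companion to Lemma \ref{l:unbdnotfeas}: there unboundedness of $\rho$ obstructed feasibility, here it is the strictly positive infimum of $\rho$, used in tandem with the hypothesised shrinking metric $d$.

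First I would fix $\rho\in E$ and set $a:=\inf_{x\neq y}\rho(x,y)>0$, so $\rho(x,y)\geq a$ for all $x\neq y$ in $\mathbf{X}$. The shrinking metric $d$ supplies the building block: put $\widehat{\rho_{_b}}(x,y):=\frac{d(x,y)}{1+d(x,y)}$, which, exactly as in the preceding proofs, is again a metric with $0\leq\widehat{\rho_{_b}}\leq 1$, is not $O$ (since $d\neq O$, because $\mathbf{X}$ has at least two points), and satisfies $\inf_{x\neq y}\widehat{\rho_{_b}}(x,y)=0$; this last fact holds because $t\mapsto t/(1+t)$ is continuous, increasing and vanishes at $0$, while $\inf_{x\neq y}d(x,y)=0$.

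The key point — and the one place a naive attempt stumbles, since $\min\{\widehat{\rho_{_b}},\rho\}$ need not obey the triangle inequality — is to scale $\widehat{\rho_{_b}}$ down below $\rho$ using the uniform lower bound $a$. I would set $\sigma:=a\cdot\widehat{\rho_{_b}}$ (any positive scalar $\leq a$ does). Then $\sigma$ is a metric with $\sigma\neq O$; for $x\neq y$ we have $\sigma(x,y)=a\,\widehat{\rho_{_b}}(x,y)\leq a\cdot 1=a\leq\rho(x,y)$, and $\sigma(x,x)=0=\rho(x,x)$, so $\sigma\leq\rho$, i.e. $\sigma\in\downarrow\rho\smallsetminus\{O\}$. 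Moreover $\inf_{x\neq y}\sigma(x,y)=a\cdot 0=0$.

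Finally I would verify $\sigma\notin L(\rho)$. If $\sigma\in L(\rho)$, then by the definition of the testing set there exist $\beta\in\K^{*}$ and a primitive element — here necessarily $O$, as $[\mathcal{D}(\mathbf{X})]_{0}=\{O\}$ — with $\sigma\geq\beta\cdot\rho+O$, whence $\sigma(x,y)\geq|\beta|\,\rho(x,y)\geq|\beta|\,a>0$ for all $x\neq y$, contradicting $\inf_{x\neq y}\sigma(x,y)=0$. Thus $\downarrow\rho\smallsetminus\{O\}\not\subseteq L(\rho)$, so $\rho\notin Q\big(\mathcal{D}(\mathbf{X})\big)$, and $E\cap Q\big(\mathcal{D}(\mathbf{X})\big)=\emptyset$. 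I do not anticipate any genuine obstacle here; the only thing to notice is the scaling trick that replaces the (illegitimate) pointwise minimum of two metrics.
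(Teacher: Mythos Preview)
Your proof is correct and follows essentially the same route as the paper: both construct $\widehat{\rho_{_b}}(x,y)=\frac{d(x,y)}{1+d(x,y)}$, scale it by the positive infimum $a$ (the paper writes $c$) of $\rho$ to obtain a nonzero metric below $\rho$ whose infimum over $x\neq y$ is still $0$, and then derive a contradiction from $a\cdot\widehat{\rho_{_b}}\in L(\rho)$. Your aside about why a pointwise minimum would not work is a nice clarification but otherwise the arguments coincide.
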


	\begin{proof}
			Consider the metric $\widehat{\rho_{_b}}$ defined by $\widehat{\rho_{_b}}(x,y):=\frac{d(x,y)}{1+d(x,y)}$ for all $x,y\in\mathbf{X}$.
		Clearly $\widehat{\rho_{_b}}\leq d$. Let $\rho\in E$. Denote $c:=\displaystyle\inf_{\underset{x\not=y}{x,y\in\mathbf{X}}}\rho(x,y)$. Then $\frac{1}{c}\cdot \rho\in E$ (since $c>0$). Also $\widehat{\rho_{_b}}(x,y)\leq 1\leq \frac{1}{c}\cdot\rho(x,y)$, $\forall\,x,y\in\mathbf{X}$. This implies $c\cdot \widehat{\rho_{_b}}\leq \rho$ and $\widehat{\rho_{_b}}\not=O$ i.e. $c \cdot \widehat{\rho_{_b}}\in \downarrow \rho\smallsetminus\{O\}$. We claim that $c\cdot \widehat{\rho_{_b}}\not\in L(\rho)$. 
		
		If not, let $c\cdot \widehat{\rho_{_b}}\in L(\rho)$. Then  $\exists\,\alpha\in\K^{*}$ such that $c$ $\widehat{\rho_{_b}}(x,y)\geq |\alpha|\rho(x,y)$, $\forall\,x,y\in\mathbf{X}$ which  implies $\widehat{\rho_{_b}}(x,y)\geq \frac{|\alpha|\rho(x,y)}{c}\geq |\alpha|$, $\forall\,x,y\in\mathbf{X}$ with $x\not=y$. Hence $\displaystyle\inf_{\underset{x\not=y}{x,y\in\mathbf{X}}} \widehat{\rho_{_b}}(x,y)\geq|\alpha|>0$ ------ a contradiction, since $\displaystyle\inf_{\underset{x\not=y}{x,y\in\mathbf{X}}} \widehat{\rho_{_b}}(x,y)\leq \displaystyle\inf_{\underset{x\not=y}{x,y\in\mathbf{X}}} d(x,y)=0$. Therefore $c\cdot \widehat{\rho_{_b}}\not\in L(\rho)$. 
		
		Thus we have $\downarrow\rho\smallsetminus\{O\}\not\subseteq L(\rho)$ and consequently, $\rho\not\in  Q\big(\mathcal{D}(\mathbf{X})\big)$ (see Definition \ref{d:feasible}). Therefore $E\cap Q\big(\mathcal{D}(\mathbf{X})\big)=\emptyset$.
	\end{proof}

	\begin{Th}
		Suppose there exists a metric $d\in\mathcal{D}(\mathbf{X})$ such that $\displaystyle\inf_{\underset{x\not=y}{x,y\in\mathbf{X}}}d(x,y)=0$. If $\mathcal{D}(\mathbf{X})\smallsetminus\{O\}$ has a basis $B$, then $B\cap E=\emptyset$. {\em[}Here $ E $ is as in Lemma \ref{l:bdmetinfpos}{\em]}
	\end{Th}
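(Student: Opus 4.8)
The statement to prove is: if there is a shrinking metric $d$ on $\mathbf{X}$ (i.e. $\inf_{x\ne y} d(x,y)=0$), and $\mathcal{D}(\mathbf{X})\smallsetminus\{O\}$ has a basis $B$, then $B\cap E=\emptyset$, where $E$ consists of the bounded metrics $\rho$ with $\inf_{x\ne y}\rho(x,y)>0$. The plan is to combine Lemma \ref{l:bdmetinfpos} with the general fact (Remark \ref{rm:basefeas}, which itself comes from Result \ref{r:basefeas}) that every basis of $X\smallsetminus X_0$ must be contained in the feasible set $Q(X)$. Concretely: let $B$ be any basis of $\mathcal{D}(\mathbf{X})\smallsetminus\{O\}$. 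By Result \ref{r:basefeas}, for each $\rho\in B$ we have $\downarrow\rho\smallsetminus\{O\}\subseteq L(\rho)$, so $B\subseteq Q\big(\mathcal{D}(\mathbf{X})\big)$. On the other hand, Lemma \ref{l:bdmetinfpos} tells us, using the hypothesis that a shrinking metric $d$ exists, that $E\cap Q\big(\mathcal{D}(\mathbf{X})\big)=\emptyset$. Intersecting, $B\cap E\subseteq Q\big(\mathcal{D}(\mathbf{X})\big)\cap E=\emptyset$, which is exactly the claim.

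So the argument is essentially a one-line deduction: $B\subseteq Q\big(\mathcal{D}(\mathbf{X})\big)$ and $E\cap Q\big(\mathcal{D}(\mathbf{X})\big)=\emptyset$ force $B\cap E=\emptyset$. The only things one needs to invoke are Result \ref{r:basefeas} (equivalently Remark \ref{rm:basefeas}) for the inclusion $B\subseteq Q\big(\mathcal{D}(\mathbf{X})\big)$, and Lemma \ref{l:bdmetinfpos} for the disjointness. Both are already established earlier in the paper, so there is genuinely no computation left to do in this theorem; it is purely a corollary-style packaging of the preceding lemma together with the structural fact about feasible sets.

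There is essentially no obstacle here, since all the real work was done in proving Lemma \ref{l:bdmetinfpos} — that is where the shrinking metric $d$ was used to manufacture, for each $\rho\in E$, a witness $c\cdot\widehat{\rho_{_b}}\in\downarrow\rho\smallsetminus\{O\}$ failing to lie in $L(\rho)$, thereby certifying $\rho\notin Q\big(\mathcal{D}(\mathbf{X})\big)$. The one point worth stating explicitly in the write-up is the logical chain from ``$B$ is a basis'' to ``$B\subseteq Q\big(\mathcal{D}(\mathbf{X})\big)$'': a basis element $\rho\in B$ satisfies $\downarrow\rho\smallsetminus\{O\}\subseteq L(\rho)$ by Result \ref{r:basefeas}, which is precisely the defining condition for membership in $Q\big(\mathcal{D}(\mathbf{X})\big)$ (Definition \ref{d:feasible}). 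After that, the conclusion is immediate.

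\begin{proof}
Let $B$ be a basis of $\mathcal{D}(\mathbf{X})\smallsetminus\{O\}$. By Result \ref{r:basefeas}, for each $\rho\in B$ we have $\downarrow\rho\smallsetminus\{O\}\subseteq L(\rho)$, so that $\rho\in Q\big(\mathcal{D}(\mathbf{X})\big)$ by Definition \ref{d:feasible}; hence $B\subseteq Q\big(\mathcal{D}(\mathbf{X})\big)$. Since a shrinking metric $d$ exists on $\mathbf{X}$, Lemma \ref{l:bdmetinfpos} gives $E\cap Q\big(\mathcal{D}(\mathbf{X})\big)=\emptyset$. Therefore $B\cap E\subseteq Q\big(\mathcal{D}(\mathbf{X})\big)\cap E=\emptyset$, which proves the theorem.
\end{proof}
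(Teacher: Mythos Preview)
Your proof is correct and follows exactly the same approach as the paper: the paper's proof simply reads ``Proof of this Theorem follows from above Lemma \ref{l:bdmetinfpos} and Remark \ref{rm:basefeas},'' which is precisely the combination $B\subseteq Q\big(\mathcal{D}(\mathbf{X})\big)$ together with $E\cap Q\big(\mathcal{D}(\mathbf{X})\big)=\emptyset$ that you spell out.
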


	\begin{proof}
		Proof of this Theorem follows from above Lemma \ref{l:bdmetinfpos} and Remark \ref{rm:basefeas}.
	\end{proof}
	
	\begin{Th} Suppose there exists a metric $d\in\mathcal{D}(\mathbf{X})$ such that $\displaystyle\inf_{\underset{x\not=y}{x,y\in\mathbf{X}}}d(x,y)=0$.
		Then the set $D:=\big\{\rho\in\mathcal{D}(\mathbf{X}):\rho\text{ is a bounded metric with }\displaystyle\inf_{\underset{x\not=y}{x,y\in\mathbf{X}}} \rho(x,y)=0\big\}$ is a generator of $\mathcal{D}(\mathbf{X})\smallsetminus\{O\}$. \label{t:generator}
	\end{Th}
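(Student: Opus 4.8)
The plan is to verify directly the defining property of a generator from Definition \ref{d:gen}, namely $\mathcal{D}(\mathbf{X})\smallsetminus\{O\}=\bigcup_{\rho\in D}L(\rho)$. One inclusion is essentially free: every metric differs from $O$, so $D\subseteq\mathcal{D}(\mathbf{X})\smallsetminus\{O\}$, and each $L(\rho)\subseteq\mathcal{D}(\mathbf{X})\smallsetminus\{O\}$ by Proposition \ref{p:L(x)}(iv) together with $[\mathcal{D}(\mathbf{X})]_0=\{O\}$; hence $\bigcup_{\rho\in D}L(\rho)\subseteq\mathcal{D}(\mathbf{X})\smallsetminus\{O\}$. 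So the entire content lies in the reverse inclusion: given an arbitrary metric $\eta$ on $\mathbf{X}$, I must exhibit $\rho\in D$ and $\alpha\in\K^{*}$ with $\eta\geq\alpha\cdot\rho+O$, i.e. $\eta(x,y)\geq|\alpha|\,\rho(x,y)$ for all $x,y\in\mathbf{X}$.

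I would split the argument on the value $s:=\inf_{x\neq y}\eta(x,y)$. \textbf{If $s=0$:} a bounded truncation of $\eta$ already lies in $D$. Put $\rho(x,y):=\min\{1,\eta(x,y)\}$ (the construction of Theorem \ref{t:min}); it is routine that this defines a bounded metric, $\rho\neq O$, with $\inf_{x\neq y}\rho(x,y)=\min\{1,s\}=0$, so $\rho\in D$, while $\rho\leq\eta$ gives $\eta\geq 1\cdot\rho+O$, whence $\eta\in L(\rho)$. \textbf{If $s>0$:} now $\eta$ is bounded below by a positive constant, and here the hypothesis is brought in. Choose $d\in\mathcal{D}(\mathbf{X})$ with $\inf_{x\neq y}d(x,y)=0$ and set $\rho(x,y):=\frac{d(x,y)}{1+d(x,y)}$; this is a bounded metric ($\rho\leq 1$), $\rho\neq O$, and $\inf_{x\neq y}\rho(x,y)\leq\inf_{x\neq y}d(x,y)=0$, so $\rho\in D$. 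Since $\rho(x,y)\leq 1$, for $x\neq y$ we get $s\cdot\rho(x,y)\leq s\leq\eta(x,y)$, while for $x=y$ both sides are $0$; hence $\eta\geq s\cdot\rho+O$, so $\eta\in L(\rho)$. In either case $\eta\in\bigcup_{\rho\in D}L(\rho)$, and combined with the trivial inclusion above this proves that $D$ generates $\mathcal{D}(\mathbf{X})\smallsetminus\{O\}$.

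I do not expect a genuine obstacle: the two maps $\eta\mapsto\min\{1,\eta\}$ and $d\mapsto d/(1+d)$ are exactly the standard bounded-metric constructions already used repeatedly in this section, and checking that they land in $D$ and that $\eta$ dominates a positive scalar multiple of them is elementary. The one point that deserves care — and the reason the hypothesis that $\mathbf{X}$ admits a shrinking metric is indispensable — is the case $s>0$: there $\eta$ itself and every bounded truncation of it have infimum $\geq\min\{1,s\}>0$, so none of them can belong to $D$, and one is forced to step outside $\eta$ altogether and borrow a member of $D$ manufactured from the ambient shrinking metric $d$.
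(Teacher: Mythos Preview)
Your proposal is correct and follows essentially the same approach as the paper: both split on whether $s=\inf_{x\neq y}\eta(x,y)$ vanishes, use the bounded truncation $d/(1+d)$ of the given shrinking metric when $s>0$, and a bounded truncation of $\eta$ itself when $s=0$. The only cosmetic difference is that in the $s=0$ case the paper uses $\eta/(1+\eta)$ while you use $\min\{1,\eta\}$; both land in $D$ and sit below $\eta$, so either works.
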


	\begin{proof} Define a metric $ \widehat{d}(x,y):=\frac{d(x,y)}{1+d(x,y)},\forall\,x,y\in\mathbf X $. Then $ \widehat{d} $ is bounded with $ \displaystyle\inf_{\underset{x\not=y}{x,y\in\mathbf{X}}}\widehat{d}(x,y)=0 $. So $ \widehat{d} \in D$ i.e. $ D\neq\emptyset $. Now let $\rho\in \mathcal{D}(\mathbf{X})\smallsetminus\{O\}$ such that $\displaystyle\inf_{\underset{x\not=y}{x,y\in\mathbf{X}}} \rho(x,y)=:s>0$. Then
$ \widehat{d}(x,y)\leq\frac{1}{s}\cdot s\leq\frac{1}{s}\rho(x,y),\forall\,x,y\in\mathbf X,x\neq y $. So $ s\cdot\widehat d\leq\rho $ and hence $ \rho\in L(\widehat{d}) $, where $ \widehat{d}\in D $.

Next let $\rho\in \mathcal{D}(\mathbf{X})\smallsetminus\{O\}$ such that $\displaystyle\inf_{\underset{x\not=y}{x,y\in\mathbf{X}}} \rho(x,y)=0$. Define the metric $ \widehat{\rho}(x,y):=\frac{\rho(x,y)}{1+\rho(x,y)},\forall\,x,y\in\mathbf X $. Then $ \widehat{\rho} $ is bounded with $ \displaystyle\inf_{\underset{x\not=y}{x,y\in\mathbf{X}}} \widehat{\rho}(x,y)=0 $. So $ \widehat{\rho} \in D$. Clearly $ \widehat{\rho}\leq\rho $ which implies $ \rho\in L(\widehat{\rho}) $. Thus $ D $ is a generator of $\mathcal{D}(\mathbf{X})\smallsetminus\{O\}$.	\end{proof}

\begin{Not} 
If there exists a metric $d\in\mathcal{D}(\mathbf{X})$ such that $\displaystyle\inf_{\underset{x\not=y}{x,y\in\mathbf{X}}}d(x,y)=0$ then in view of Lemma \ref{l:unbdnotfeas} and Lemma \ref{l:bdmetinfpos}, we can say that\\ $$Q\big(\mathcal{D}(\mathbf{X})\big)\subseteq D:=\left\{\rho\in\mathcal{D}(\mathbf{X}):\rho\text{ is a bounded metric with }\displaystyle\inf_{\underset{x\not=y}{x,y\in\mathbf{X}}}\rho(x,y)=0\right\}$$.
\end{Not}
	
\noindent If $Q\big(\mathcal{D}(\mathbf{X})\big) = D$, then we can say that $\mathcal{D}(\mathbf{X})\smallsetminus\{O\}$ has a basis, in view of Theorems \ref{t:Q(X)generator} and \ref{t:generator}. So the question is whether $Q\big(\mathcal{D}(\mathbf{X})\big) = D$ ? The equality may not be true in general. We can construct, as shown below, a metric on $[-1,1]$ which lies outside $Q\big(\mathcal{D}([-1,1])\big)$ but inside $D$ i.e. $Q\big(\mathcal{D}([-1,1])\big)\subsetneqq D$. 

\begin{Ex}

		Define $\kappa:[-1,1]\times[-1,1]\longrightarrow[0,\infty)$ by \\		
			\centerline{$ \kappa(x,y):= \begin{cases} 			
				|x-y|, & \text{if } |x|\leq \frac{1}{2}\text{ and }|y|\leq \frac{1}{2}\\
				2 ,& \text{if } \frac{1}{2}<|x|\leq 1\text{ or }\frac{1}{2}<|y|\leq 1\text{ with }x\not=y \\
				0,& \text{otherwise}
			\end{cases} $}
	Now we shall show that $\kappa$ is a metric on $[-1,1]$. 
	
	(i) Clearly $\kappa(x,y)\geq 0$ for all $x,y\in[-1,1]$. 
	
	(ii)It is easy to observe that $\kappa(x,y)=0\iff x=y$.
	
	(iii) If $x,y\in[-\frac{1}{2},\frac{1}{2}]$, then $\kappa(x,y)=|x-y|=|y-x|=\kappa(y,x)$. If $x,y\in [-1,1]$ with $|x|>\frac{1}{2}$ or $|y|>\frac{1}{2}$, then for $x\not=y$, $\kappa(x,y)=2=\kappa(y,x)$ and for $x=y$, $\kappa(x,y)=0=\kappa(y,x)$. Therefore $\kappa(x,y)=\kappa(y,x)$ for all $x,y\in[-1,1]$.
	
	(iv) Let $x,y\in[-1,1]$ and $z\in[-1,1]$. We want to verify triangle inequality for $\kappa$.
	
	\textbf{Case 1:} Suppose $|x|\leq \frac{1}{2}$ and $|y|\leq \frac{1}{2}$. Then for any $z\in [-1,1]$ with $|z|\leq \frac{1}{2}$, $\kappa(x,z)=|x-z|$ and $\kappa(y,z)=|y-z|$. Hence $\kappa(x,y)=|x-y|\leq |x-z|+|y-z|=\kappa(x,z)+\kappa(z,y)$. If $z\in[-1,1]$ with $|z|>\frac{1}{2}$, we obtain $\kappa(x,z)=2$ and $\kappa(z,y)=2$. Hence $\kappa(x,y)=|x-y|\leq 1\leq \kappa(x,z)+\kappa(z,y)$. 
	
	\textbf{Case 2:} Suppose $|x|>\frac{1}{2}$ and $|y|\leq\frac{1}{2}$. For any $z\in [-1,1]$ with $z\not=x$, we observe $\kappa(x,z)=2$. Hence $\kappa(x,y)= 2\leq 2+\kappa(z,y)=\kappa(x,z)+\kappa(z,y)$. If $z=x$, then $\kappa(z,x)=0$ and consequently, we obtain $\kappa(x,y)=\kappa(x,z)+\kappa(z,y)$ [since $\kappa(x,y)=\kappa(z,y)$].
	
	\textbf{Case 3:} Suppose $|x|\leq \frac{1}{2}$ and $|y|>\frac{1}{2}$. Then for any $z\in[-1,1]$, $\kappa(x,y)\leq \kappa(x,z)+\kappa(z,y)$ (similarly by Case 2).
	
	\textbf{Case 4:} Suppose $|x|>\frac{1}{2}$ and $|y|>\frac{1}{2}$. Then for any $z\in[-1,1]$ with $z\not=x$, we have $\kappa(x,z)=2$ and $\kappa(z,y)=2$. Therefore $\kappa(x,y)=2\leq 2+2= \kappa(x,z)+\kappa(z,y)$. If $z=x$, then $\kappa(x,z)=0$ and so, $\kappa(x,y)=\kappa(z,y)$. As a consequence, $\kappa(x,y) = \kappa(x,z)+\kappa(z,y)$.
	 
	Hence $\kappa$ is a metric on $[-1,1]$.\qed
\end{Ex}

	\begin{Res}
		$\kappa\in D\smallsetminus Q\Big(\mathcal{D}\big([-1,1]\big)\Big)$.
	\end{Res} 

	 \begin{proof}
	 	Since $\kappa(x,y)\leq 2$ for all $x,y\in[-1,1]$, $\kappa$ is a bounded metric. Also for $x,y\in[-\frac{1}{2},\frac{1}{2}]$, $\kappa(x,y)=|x-y|$. Therefore $ \inf\big\{\kappa(x,y):x,y\in[-\frac{1}{2},\frac{1}{2}]\text{ and }x\not=y\big\} =0$. Also $\inf\big\{\kappa(x,y):1\geq|x|>\frac{1}{2} \text{ or } 1\geq|y|>\frac{1}{2}, \, x\not=y\big\}=2$. Therefore we  obtain that $\displaystyle\inf_{\underset{x\not=y}{x,y\in[-1,1]}} \kappa(x,y)=0$. Hence $\kappa\in D$. 
	 	
	 	Now we prove that $\kappa\not \in Q\Big(\mathcal{D}\big([-1,1]\big)\Big)$. Clearly, $\rho_{_u}(x,y)= \kappa(x,y)$ for all $x,y\in [-\frac{1}{2},\frac{1}{2}]$ and $\rho_{_u}(x,y)\leq \kappa(x,y)$ for all other $x,y\in [-1,1]$. Also $\rho_{_u}\not={O}$. Hence $\rho_{_u}\in \downarrow \kappa\smallsetminus\{O\}$. 
	 	
	 	We claim that $\rho_{_u}\notin L(\kappa)$. If not, let $\rho_{_u}\in L(\kappa)$. Then $\exists\,\alpha\in\K^{*}$ such that $\rho_{_u}\geq \alpha\cdot \kappa$ i.e. $\rho_{_u}(x,y)\geq |\alpha|\kappa(x,y)$ for all $x,y\in [-1,1]$ --------(1). For $x=1,y=1-\frac{1}{n}$, $\rho_{_u}(x,y)=\frac{1}{n}$ and $\kappa(x,y)=2$, for $ n>2 $. Thus from (1), it follows that $\frac{1}{2n}\geq |\alpha|$, for all $n>2$ which implies $\alpha =0$ ------ a contradiction. Hence $\rho_{_u}\not\in L(\kappa)$. 
	 	
	 	Thus $\downarrow\kappa\smallsetminus\{O\}\not\subseteq L(\kappa)$. So by Definition \ref{d:feasible}, we can say that $\kappa\not \in Q\Big(\mathcal{D}\big([-1,1]\big)\Big)$.\end{proof}
 	
	 \begin{Rem}
	 	$\kappa,\rho_{_u}\in D\smallsetminus\{O\}$ with $\rho_{_u}\leq \kappa$. Hence $\kappa\in L(\rho_{_u})$. Therefore $D$ is an orderly dependent set. Consequently, $D$ is not a basis of $\mathcal{D}([-1,1])$.
	 \end{Rem}
 
 In the following theorem, we shall illustrate a certain scenario when basis of $ \mathcal D(\mathbf X)\smallsetminus\{O\} $ can be constructed. It will depict how the metrics can be compared with the
 basis elements (in view of Definitions \ref{d:gen} and \ref{d:basis}).
 
	\begin{Th}
		Suppose there exists a metric $d\in\mathcal{D}(\mathbf{X})$ such that $\displaystyle\inf_{\underset{x\not=y}{x,y\in\mathbf{X}}}d(x,y)=0$. If $ Q\big(\mathcal{D}(\mathbf X)\big)\bigcap[\rho]\neq\emptyset,\forall\,\rho\in D $, where $ [\rho]:=\big\{e\in D:L(e)=L(\rho)\big\} $, then $ \mathcal{D(\mathbf X)}\smallsetminus\{O\} $ has a basis. {\em[}Here $ D $ is as in Theorem \ref{t:generator}{\em]}
\label{t:basis}	\end{Th}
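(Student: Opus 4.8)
The plan is to verify that the feasible set $Q\big(\mathcal{D}(\mathbf{X})\big)$ is a generator of $\mathcal{D}(\mathbf{X})\smallsetminus\{O\}$ and then invoke Theorem \ref{t:Q(X)generator}, which states precisely that an evs has a basis iff its feasible set generates the non-primitive part. The standing hypothesis that a shrinking metric exists on $\mathbf{X}$ puts us in the situation of Theorem \ref{t:generator}, so that $D$ is already a generator: $\mathcal{D}(\mathbf{X})\smallsetminus\{O\}=\bigcup_{\rho\in D}L(\rho)$. The role of the extra hypothesis is to let us trade each $\rho\in D$ for a \emph{feasible} metric carrying the same testing set: for $\rho\in D$ pick $e_\rho\in Q\big(\mathcal{D}(\mathbf{X})\big)\cap[\rho]$, so that $L(e_\rho)=L(\rho)$ by the definition of $[\rho]$.

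With this choice in hand the argument is a short chain of inclusions. On one side,
\[
\mathcal{D}(\mathbf{X})\smallsetminus\{O\}=\bigcup_{\rho\in D}L(\rho)=\bigcup_{\rho\in D}L(e_\rho)\subseteq\bigcup_{q\in Q(\mathcal{D}(\mathbf{X}))}L(q).
\]
On the other side, for every $q\in Q\big(\mathcal{D}(\mathbf{X})\big)$ we have $L(q)\subseteq\mathcal{D}(\mathbf{X})$ directly from the definition of a testing set, while $O\notin L(q)$ by Proposition \ref{p:L(x)}(iv) (recall $[\mathcal{D}(\mathbf{X})]_0=\{O\}$); hence $L(q)\subseteq\mathcal{D}(\mathbf{X})\smallsetminus\{O\}$, and therefore $\bigcup_{q\in Q(\mathcal{D}(\mathbf{X}))}L(q)\subseteq\mathcal{D}(\mathbf{X})\smallsetminus\{O\}$. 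Combining the two inclusions gives $\mathcal{D}(\mathbf{X})\smallsetminus\{O\}=\bigcup_{q\in Q(\mathcal{D}(\mathbf{X}))}L(q)$, i.e. $Q\big(\mathcal{D}(\mathbf{X})\big)$ generates $\mathcal{D}(\mathbf{X})\smallsetminus\{O\}$, and Theorem \ref{t:Q(X)generator} then yields a basis. If one wants it concretely, Theorem \ref{t:maxbasis} shows that any maximal orderly independent subset of $Q\big(\mathcal{D}(\mathbf{X})\big)$ is such a basis.

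I do not expect a genuine obstacle here: the theorem is essentially a bookkeeping consequence of Theorems \ref{t:generator} and \ref{t:Q(X)generator} once the hypothesis is granted. The only points worth a second glance are that the hypothesis is non-vacuous --- $D\neq\emptyset$ because $\widehat d(x,y):=\tfrac{d(x,y)}{1+d(x,y)}$ lies in $D$ by Theorem \ref{t:generator}, whence $Q\big(\mathcal{D}(\mathbf{X})\big)\neq\emptyset$ as well --- and that ``$\mathcal{D}(\mathbf{X})$ has a basis'' in Theorem \ref{t:Q(X)generator} is, by Definition \ref{d:basis}, exactly the statement that $\mathcal{D}(\mathbf{X})\smallsetminus\{O\}$ has a basis.
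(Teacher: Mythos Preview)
Your proof is correct. It takes a somewhat different route from the paper's own argument: rather than constructing a basis by hand, you show that $Q\big(\mathcal{D}(\mathbf{X})\big)$ generates $\mathcal{D}(\mathbf{X})\smallsetminus\{O\}$ and then invoke the characterisation Theorem \ref{t:Q(X)generator}. The paper instead defines the equivalence relation $\rho\sim e\Leftrightarrow L(\rho)=L(e)$ on $D$, uses the axiom of choice to select one feasible representative from each class, and then verifies directly that this selection is both a generator (since $D$ is) and orderly independent (using Lemma \ref{l:Q(X)} and Proposition \ref{p:L(x)}(iii) to rule out $L(e_1)=L(e_2)$ for distinct representatives). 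Your argument is shorter and more modular, delegating the hard work to the cited theorems; the paper's argument is more self-contained and has the side benefit of exhibiting a concrete basis indexed by the equivalence classes, so that $\dim\mathcal{D}(\mathbf{X})=[\text{Card}(B):0]$ can be read off immediately.
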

	
\begin{proof}
	By Theorem \ref{t:generator}, $ D $ is nonempty and a generator of $ \mathcal{D(\mathbf X)}\smallsetminus\{O\} $. We define a relation $ \sim $ on $ D $ as follows : For $ \rho,e\in D $, $ \rho\sim e $ if $ L(\rho)=L(e) $. Then $ \sim $ is an equivalence relation on $ D $. The equivalence class containing $ \rho $ is given by $ [\rho]:=\big\{e\in D:L(e)=L(\rho)\big\} $. By hypothesis, $ Q\big(\mathcal{D}(\mathbf X)\big)\bigcap[\rho]\neq\emptyset,\forall\,\rho\in D $. Let us construct a set $ B $ taking exactly one element from each equivalence class in such a way that they belong to $ Q\big(\mathcal{D}(\mathbf X)\big) $ also. By axiom of choice $ B $ is nonempty. We claim that $ B $ is a basis of $ \mathcal{D(\mathbf X)}\smallsetminus\{O\} $.
	
	Let $ \rho\in\mathcal{D(\mathbf X)}\smallsetminus\{O\} $. Since $ D $ is a generator, $ \exists\,\rho'\in D $ such that $ \rho\in L(\rho') $. Again for $ \rho'\in D,\exists\,e\in B $ such that $ L(\rho')=L(e) $. Thus $ \rho\in L(e) $. So $ B $ generates $ \mathcal{D(\mathbf X)}\smallsetminus\{O\} $.
	
	If possible let $ B $ is not orderly independent. Then $ \exists\,e_1,e_2\in B $ such that $ e_1,e_2 $ are orderly dependent. Without any loss of generality we assume that $ e_1\in L(e_2) $. Then $\exists\,\alpha>0 $ such that $ e_1\geq \alpha e_2 $. Since $ e_1\in Q\big(\mathcal{D}(\mathbf X)\big)$, we have by Lemma \ref{l:Q(X)} $ L(\alpha e_2)=L(e_1) $ and then by Proposition \ref{p:L(x)}(iii) $ L(e_2)=L(\alpha e_2)=L(e_1) $ ------ which is not possible since $ e_1,e_2 $ being two distinct elements of $ B $ belong to two different equivalence classes. So $ B $ is orderly independent and hence is a basis of $ \mathcal{D(\mathbf X)}\smallsetminus\{O\} $. Moreover, $ \dim\mathcal{D(\mathbf X)}=[\text{Card}(B):0] $.
\end{proof}	

\begin{Rem}
	If $ \mathcal{D}(\mathbf X)\smallsetminus\{O\} $ possesses a basis $ B $ then any metric on $ \mathbf X $ can be compared with some metric within $ B $, but no two metrics within $ B $ are comparable; here comparability is judged in terms of the orderly dependence of $ \mathcal{D}(\mathbf X) $. If two metrics $ d,\rho\in\mathcal{D}(\mathbf X) $ satisfy the condition $ d\in L(\rho) $ and $ \rho\in L(d) $ \big[or equivalently $ L(d)=L(\rho) $, by Proposition \ref{p:L(x)} \big] then the two metrics must be equivalent as well as behave alike in terms of completeness. In the next section we shall measure the comparability of two metrics.
\end{Rem}

\section{Comparability of metrics using comparing function}

In this section we shall compare two metrics using `\textit{comparing function}'. This is a notion used in topological exponential vector space to measure the degree of comparability of two elements. Using this concept we shall characterise orderly independence of two elements in a topological exponential vector space. We shall compute the comparing function in $ \mathcal{D(\X)} $ and use it to determine orderly dependency of some standard metrics.

\begin{Def}\cite{norm}
	Let $X$ be a zero primitive evs over the field $\K$ of real or complex numbers with additive identity $\theta$. For $x,y\in X$, we define the \textit{comparing spectrum of $y$ relative to $x$} by $\sigma_{x}(y):=\big\{\lambda\in\K:$ $\lambda x\leq y\big\}$.
	
	Since $X$ is zero primitive, $y\geq\theta$ for all $y\in X$ and hence $0\in\sigma_x(y)$ so that it is nonempty.
\end{Def}

\begin{Prop}{\em\cite{bal}}
	For a topological evs $X$, $\sigma_x(y)$ is bounded, $\forall\,x\neq\theta,\forall\,y\in X$.
	\label{p:specbdd}\end{Prop}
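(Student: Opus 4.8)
The plan is to argue by contradiction, using the two topological ingredients available in a topological evs: continuity of scalar multiplication and closedness of the partial order.

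Suppose, for some $x\neq\theta$ and some $y\in X$, the set $\sigma_x(y)$ is unbounded in $\K$. Then I can pick a sequence $\{\lambda_n\}$ in $\sigma_x(y)$ with $|\lambda_n|\to\infty$; discarding an initial segment, we may assume $\lambda_n\neq0$ for every $n$. By definition $\lambda_n x\leq y$ for all $n$. The first key step is to rescale by $1/\lambda_n$: applying axiom $A_2$ (valid for \emph{every} scalar, in particular $1/\lambda_n$) to $\lambda_n x\leq y$ gives $\tfrac1{\lambda_n}(\lambda_n x)\leq \tfrac1{\lambda_n}y$, and by $A_3$(ii) together with $A_3$(iv) the left-hand side equals $\bigl(\tfrac1{\lambda_n}\lambda_n\bigr)x=1\cdot x=x$. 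Hence $x\leq \tfrac1{\lambda_n}y$ for all $n$.

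Next I would pass to the limit. Since $|\lambda_n|\to\infty$ we have $\tfrac1{\lambda_n}\to0$ in $\K$, so continuity of the scalar multiplication forces $\tfrac1{\lambda_n}y\to 0\cdot y=\theta$, the last equality being $A_4$. Thus the sequence $\bigl(x,\tfrac1{\lambda_n}y\bigr)$ in $X\times X$ converges to $(x,\theta)$, while each of its terms lies in the graph $G_{\leq}(X)$ of the order by the previous step. As $X$ is a topological evs, $G_{\leq}(X)$ is closed, so the limit $(x,\theta)$ lies in $G_{\leq}(X)$, i.e. $x\leq\theta$.

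Finally the contradiction comes from the minimality of $\theta$: the additive identity $\theta$ belongs to the primitive space $X_0$, which is precisely the set of minimal elements of $X$, so no element other than $\theta$ can lie below $\theta$; since $x\neq\theta$, the relation $x\leq\theta$ is impossible. (Equivalently, in the zero primitive setting $\theta\leq x$ for every $x$, and antisymmetry of $\leq$ again forces $x=\theta$.) This contradiction shows $\sigma_x(y)$ is bounded. The only mildly delicate point is the rescaling step, where one must invoke $A_2$ rather than appeal to any cancellation law; once $x\leq\tfrac1{\lambda_n}y$ is established, closedness of the order does the rest, and I anticipate no serious obstacle.
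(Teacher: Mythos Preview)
Your argument is correct. The paper does not actually supply a proof of this proposition; it is quoted from \cite{bal} and used as a black box, so there is no in-paper proof to compare against. Your route---rescale $\lambda_n x\leq y$ by $1/\lambda_n$ via $A_2$ and $A_3$(ii),(iv) to get $x\leq\tfrac1{\lambda_n}y$, then use continuity of scalar multiplication and closedness of the order to pass to $x\leq\theta$, contradicting minimality of $\theta\in X_0$---is exactly the natural one, and each step is justified by the evs axioms and the definition of topological evs given in the paper. The only point worth making explicit for a reader is that $\theta\in X_0$; this follows from $A_5$ (since $\theta+(-1)\cdot\theta=\theta$ by $A_4$ and the identity property), or simply from the fact stated in the paper that $X_0$ is a vector space and hence contains the additive identity.
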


\begin{Def}{\cite{bal}}
	Let $X$ be a zero primitive topological evs over the field $\K$ of real or complex numbers. For any $x\,(\neq\theta)\in X$, the \textit{comparing function relative to $x$}, denoted by $C_{x}$, is defined as\\ \centerline{$C_{x}(y):=\underset{\lambda\in\sigma_{x}(y)}{\sup}|\lambda|$, for all $y\in X$.}
	
	By Proposition \ref{p:specbdd}, $C_x$ is well-defined and $C_x(y)\in[0,\infty), \forall\,y\in X,\forall\,x\neq\theta$.
\end{Def}

\begin{Prop}{\em\cite{norm}}
	Let $X$ be a zero primitive topological evs over the field $\K$. For any non-zero homogeneous element $x\in X$ and any $y\in X$, $C_x(y)x\leq y$.
	\label{p:leq}\end{Prop}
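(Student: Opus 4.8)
The plan is to show that the supremum defining $C_x(y)$ is in fact attained, by combining the continuity of scalar multiplication with the closedness of the partial order. Write $r:=C_x(y)$. By Proposition \ref{p:specbdd} the set $\sigma_x(y)$ is bounded, so $r<\infty$; moreover $0\in\sigma_x(y)$, so $r$ is a genuine nonnegative real number.

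First I would use homogeneity of $x$ to replace the (possibly complex) scalars by nonnegative reals. Since $\alpha x=|\alpha|x$ for every $\alpha\in\K$, we have $\alpha x\leq y\iff |\alpha|x\leq y$, so $\sigma_x(y)$ is stable under $\alpha\mapsto|\alpha|$ and
\[
r=C_x(y)=\sup\big\{t\in[0,\infty): tx\leq y\big\}.
\]
By the definition of the supremum there is a sequence $\{t_n\}_{n\in\N}$ of nonnegative reals with $t_nx\leq y$ for every $n$ and $t_n\to r$ in $[0,\infty)$.

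Next I would pass to the limit. The map $\K\to X$, $\mu\mapsto\mu x$, is continuous (it is the restriction of the continuous scalar multiplication $\K\times X\to X$ to $\K\times\{x\}$), so $t_nx\to rx$ in $X$, whence $(t_nx,y)\to(rx,y)$ in $X\times X$. Because the partial order `$\leq$' is closed, its graph $G_{\leq}(X)=\{(u,v)\in X\times X:u\leq v\}$ is closed in $X\times X$; since each pair $(t_nx,y)$ lies in this graph, so does the limit $(rx,y)$. Hence $rx\leq y$, that is, $C_x(y)x\leq y$, which is precisely the claim.

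The only delicate point is the interchange of limit and order, and this is exactly where the two topological-evs axioms enter: continuity of scalar multiplication makes sense of $t_nx\to rx$, and closedness of `$\leq$' yields $rx\leq y$ from $t_nx\leq y$. Homogeneity of $x$ is used beforehand, so that the approximating scalars can be chosen real and nonnegative — otherwise a sequence $\lambda_n$ with $|\lambda_n|\to r$ need not make $\lambda_n x$ converge in $X$. The degenerate case $r=0$ requires no separate treatment: it is already covered, since $0\cdot x=\theta\leq y$ because $0\in\sigma_x(y)$.
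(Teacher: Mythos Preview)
Your argument is correct. Note, however, that in the present paper this proposition is merely quoted from \cite{norm} and no proof is supplied here, so there is nothing in the paper to compare against directly. That said, your approach---using homogeneity of $x$ to reduce the spectrum to nonnegative reals, then approximating $C_x(y)$ by a sequence $t_n\uparrow r$, invoking continuity of scalar multiplication to obtain $t_nx\to rx$, and finally using closedness of the partial order to pass to the limit in $t_nx\leq y$---is exactly the natural proof one would expect in this framework and is almost certainly what the cited source does as well.
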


\begin{Res}
	In a zero-primitive homogeneous topological evs $ X $ over the field $ \K $, for any $ x\neq\theta $, $ y\in L(x) $ iff $ C_x(y)>0 $.
\end{Res}

\begin{proof}
	If $ y\in L(x) $ then $\exists
	\,\alpha\in\K^* $ such that $ y\geq\alpha x $ which implies $ C_x(y)\geq|\alpha|>0 $. Conversely, if $ C_x(y)>0 $ then by Proposition \ref{p:leq}, $ C_x(y)x\leq y $ and hence  $ y\in L(x) $.
\end{proof}

This immediately implies the following Theorem.

\begin{Th}
	In a zero-primitive homogeneous topological evs $ X $ over the field $ \K $,\\ (i) two points $ x,y\in X\smallsetminus\{\theta\} $ will be orderly dependent iff either $ C_x(y)>0 $ or $ C_y(x)>0 $.\\ (ii) $ x,y\in X\smallsetminus\{\theta\} $ will be orderly independent iff $ C_x(y)=0=C_y(x) $.\\ (iii) for $ x,y\in X\smallsetminus\{\theta\} $, $ C_x(y)>0\Leftrightarrow y\in L(x)\Leftrightarrow L(y)\subseteq L(x) $.\\ (iv) for $ x,y\in X\smallsetminus\{\theta\} $, $ L(x)=L(y) $ iff $ C_x(y)C_y(x)>0 $.\\ (v) for $ x,y\in X\smallsetminus\{\theta\} $, $ C_x(y)=0=C_y(x) $ $ \Leftrightarrow L(x) ,L(y)$ are incomparable.
\label{t:Cxy}\end{Th}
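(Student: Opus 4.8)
The plan is to derive all five assertions from the Result proved immediately above, namely that in a zero-primitive homogeneous topological evs $X$ one has $y\in L(x)\iff C_x(y)>0$ for every $x\neq\theta$ and every $y$. Since $X$ is zero primitive, $X_0=\{\theta\}$, so $X\smallsetminus\{\theta\}=X\smallsetminus X_0$ and the notions of orderly dependence, orderly independence, and the testing sets $L(\cdot)$ apply verbatim to points of $X\smallsetminus\{\theta\}$.

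First I would establish (iii), since it is the workhorse for the rest. The equivalence $C_x(y)>0\iff y\in L(x)$ is exactly the Result. For $y\in L(x)\iff L(y)\subseteq L(x)$: the forward implication is Proposition \ref{p:L(x)}(v), while the reverse follows because $y\in L(y)$ by Proposition \ref{p:L(x)}(i), so $L(y)\subseteq L(x)$ forces $y\in L(x)$. Part (i) is then immediate by applying the Result to both orders: $x,y$ are orderly dependent means $x\in L(y)$ or $y\in L(x)$, i.e. $C_y(x)>0$ or $C_x(y)>0$. Part (ii) is the contrapositive of (i): orderly independence is by definition the failure of orderly dependence, and since $C_x$ is $[0,\infty)$-valued, the negation of ``$C_x(y)>0$ or $C_y(x)>0$'' is precisely ``$C_x(y)=0$ and $C_y(x)=0$''. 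Part (iv) follows by applying (iii) twice: $L(x)=L(y)$ iff $L(x)\subseteq L(y)$ and $L(y)\subseteq L(x)$, iff $C_y(x)>0$ and $C_x(y)>0$, iff $C_x(y)C_y(x)>0$ (using nonnegativity once more). Finally, (v) is the remaining case: by (iii), $C_x(y)=0=C_y(x)$ is equivalent to $L(y)\not\subseteq L(x)$ together with $L(x)\not\subseteq L(y)$, that is, to $L(x)$ and $L(y)$ being incomparable under inclusion.

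Because every step is a direct appeal to the preceding Result or to Proposition \ref{p:L(x)}, there is no real obstacle here; the only point that needs a little care is the systematic use of the fact that $C_x$ takes values in $[0,\infty)$, so that ``$C_x(y)$ is not positive'' coincides with ``$C_x(y)=0$''. That observation is exactly what makes the passages from orderly dependence to orderly independence in (ii), and from comparability to incomparability of testing sets in (v), come out cleanly.
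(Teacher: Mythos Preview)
Your argument is correct and is exactly the approach the paper takes: the paper simply records that the theorem ``immediately'' follows from the preceding Result, and your write-up is a faithful unpacking of that immediacy via Proposition~\ref{p:L(x)}(i),(v) and the nonnegativity of $C_x$.
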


We shall now compute the comparing function on the zero-primitive homogeneous topological evs $ \mathcal{D(\mathbf X)} $.

\begin{Th}
	Let $d\in\mathcal{D}(\mathbf{X})\smallsetminus\{O\}$. Then the comparing function relative to $d$ is\\ \centerline{$C_{d}(\rho)=\inf\limits_{\substack{x,y\in\mathbf{X}\\x\neq y}}\frac{\rho(x,y)}{d(x,y)}$, $\forall\,\rho\in\mathcal{D}(\mathbf{X})$.}
\end{Th}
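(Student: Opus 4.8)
The plan is to unwind the definition of $C_d(\rho)=\sup\{|\lambda|:\lambda\in\sigma_d(\rho)\}$, where $\sigma_d(\rho)=\{\lambda\in\K:\lambda\cdot d\leq\rho\}$, and show it equals the infimum of the ratios $\rho(x,y)/d(x,y)$ over all pairs $x\neq y$. The first step is to translate the condition $\lambda\cdot d\leq\rho$ into a pointwise statement: by the definition of the partial order and scalar multiplication on $\mathcal{D}(\mathbf X)$, $\lambda\cdot d\leq\rho$ holds if and only if $|\lambda|\,d(x,y)\leq\rho(x,y)$ for all $x,y\in\mathbf X$. Since $d$ is a genuine metric (not $O$), $d(x,y)>0$ whenever $x\neq y$, so this is equivalent to $|\lambda|\leq\rho(x,y)/d(x,y)$ for all $x\neq y$, and it is vacuous when $x=y$. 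Hence $\lambda\in\sigma_d(\rho)$ iff $|\lambda|\leq\inf_{x\neq y}\rho(x,y)/d(x,y)$.

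From this characterization the computation is immediate. Writing $m:=\inf_{x\neq y}\rho(x,y)/d(x,y)$ (a nonnegative real number, finite since every ratio is nonnegative and the infimum is bounded above by any single ratio — note $\sigma_d(\rho)$ is bounded by Proposition \ref{p:specbdd}, which also guarantees $C_d(\rho)$ is well-defined), we see that $|\lambda|\in\sigma_d(\rho)$ precisely when $|\lambda|\leq m$. Therefore $C_d(\rho)=\sup\{|\lambda|:|\lambda|\leq m\}=m$, which is exactly the claimed formula. One should note that $\rho$ is allowed to be $O$ here: if $\rho=O$ then every ratio is $0$, so $m=0=C_d(O)$, consistent with $0\in\sigma_d(O)$ being the only element.

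There is really no serious obstacle; the one point that deserves care is the passage from the pointwise inequality over \emph{all} pairs to the infimum over pairs with $x\neq y$, i.e. observing that the diagonal pairs contribute the trivial constraint $0\leq 0$ and so may be discarded. A second minor point is to confirm that the supremum defining $C_d(\rho)$ is attained by the real value $m$ itself (rather than only approached), which is clear because $\K$ contains the real scalar $m$ and $m\cdot d\leq\rho$ by the characterization above; this also re-proves Proposition \ref{p:leq} in this concrete setting. Thus the theorem follows directly, and as a corollary, combined with Theorem \ref{t:Cxy}, two metrics $d,\rho$ on $\mathbf X$ are orderly independent iff $\inf_{x\neq y}\rho(x,y)/d(x,y)=0=\inf_{x\neq y}d(x,y)/\rho(x,y)$.
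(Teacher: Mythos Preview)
Your proof is correct and follows essentially the same route as the paper: both translate $\lambda\cdot d\leq\rho$ into the pointwise inequality $|\lambda|\,d(x,y)\leq\rho(x,y)$, discard the diagonal, and identify $\sigma_d(\rho)$ with the closed disk of radius $m=\inf_{x\neq y}\rho(x,y)/d(x,y)$, whence $C_d(\rho)=m$. The only cosmetic slip is the phrase ``$|\lambda|\in\sigma_d(\rho)$ precisely when $|\lambda|\leq m$'', where you presumably meant $\lambda\in\sigma_d(\rho)$; the argument is unaffected.
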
 

\begin{proof}
	Here $\sigma_{d}(\rho)=\big\{\lambda\in\K:\lambda\cdot d\leq \rho\big\}=\big\{\lambda\in\K:|\lambda|d(x,y)\leq\rho(x,y),\forall\,x,y\in\mathbf{X}\big\}$.\\ 	Let $s:=\inf\limits_{\substack{x,y\in\mathbf{X}\\x\neq y}}\frac{\rho(x,y)}{d(x,y)}$. Clearly $s\in [0,\infty)$. We prove that $C_{d}(\rho)=s$. 
	
	By definition of $s$, $s\leq\frac{\rho(x,y)}{d(x,y)}$ for all $x,y\in\mathbf{X}$ with $x\neq y$. Therefore $s\cdot d(x,y)\leq \rho(x,y)$ for all $x,y\in\mathbf{X}$. This implies $s\in\sigma_{d}(\rho)$. Therefore $s\leq\underset{\lambda\in\sigma_{d}(\rho)}{\sup} |\lambda|$ $= C_{d}(\rho)$.
	
	To prove the converse part, let $\lambda\in\sigma_{d}(\rho)$. Thus $\lambda\cdot d\leq \rho$ which implies $|\lambda| d(x,y)\leq \rho(x,y)$ for all $x,y\in\mathbf{X}$. So $|\lambda|\leq\frac{\rho(x,y)}{d(x,y)}$ for all $x,y\in\mathbf{X}$ with $x\neq y$ $\implies |\lambda|\leq \inf\limits_{\substack{x,y\in\mathbf{X}\\x\neq y}}\frac{\rho(x,y)}{d(x,y)}=s$, for any $\lambda\in\sigma_{d}(\rho)$. Hence $\underset{\lambda\in\sigma_{d}(\rho)}{\sup}|\lambda|\leq s$. Therefore $C_{d}(\rho)=s=\inf\limits_{\substack{x,y\in\mathbf{X}\\x\neq y}}\frac{\rho(x,y)}{d(x,y)}$, for all $\rho\in\mathcal{D}(\mathbf{X})$.
\end{proof}

\begin{Rem}
	Let $ \rho,d\in\mathcal{D(\mathbf X)}\smallsetminus\{O\} $. Then $ L(\rho)=L(d) $ iff $ C_{\rho}(d)\cdot C_d(\rho)>0 $, by Theorem \ref{t:Cxy}(iv),  and hence $ \rho,d $ induce the same topology. Moreover, $ L(\rho)=L(d)\Leftrightarrow $ $ C_{\rho}(d)\rho\leq d\leq C_d(\rho)^{-1}\rho $ which implies that the metric spaces $ (\mathbf X,\rho), (\mathbf X,d) $ behave alike in terms of completeness.
\end{Rem}

We have shown in Result \ref{r:usudis} that the discrete metric $ \rho_{_d} $ and the usual metric $ \rho_{_u} $ on $ \R $ are orderly independent. We now check this using comparing function.

\begin{Res} (i)
	$ C_{\rho_{_{d}}}(\rho_{_u})=0=C_{\rho_{_{u}}}(\rho_{_d}) $
	
	(ii) For any bounded metric $ \rho $ on any non-empty set $ \mathbf X $, $ C_{\rho_{_{\min}}}(\rho)\cdot C_{\rho}(\rho_{_{\min}})>0 $
\end{Res}

\begin{proof}(i)
	$ C_{\rho_{_{d}}}(\rho_{_u})=\inf\limits_{\substack{x,y\in\R\\x\neq y}}\frac{\rho_{_{u}}(x,y)}{\rho_{_{d}}(x,y)}=\inf\limits_{\substack{x,y\in\R\\x\neq y}}\frac{|x-y|}{1}=0 $ and\\ $ C_{\rho_{_{u}}}(\rho_{_d})=\inf\limits_{\substack{x,y\in\R\\x\neq y}}\frac{\rho_{_{d}}(x,y)}{\rho_{_{u}}(x,y)}=\inf\limits_{\substack{x,y\in\R\\x\neq y}}\frac{1}{|x-y|}=0 $.\\
	Then by Theorem \ref{t:Cxy}(ii), $ \rho_{_{d}},\rho_{_u} $ are orderly independent.\\	
	(ii) $ \frac{\rho(x,y)}{\rho_{_{\min}}(x,y)}=\frac{\rho(x,y)}{\min\{1,\rho(x,y)\}}=
	\begin{cases}
		1,\text{ if }0<\rho(x,y)\leq1\\
		\rho(x,y),\text{ if }\rho(x,y)>1
	\end{cases} $\\
Thus $ C_{\rho_{_{\min}}}(\rho)=\inf\limits_{\substack{x,y\in\mathbf X\\x\neq y}}\frac{\rho(x,y)}{\rho_{_{\min}}(x,y)}=\max\left\{1,\inf\limits_{\substack{x,y\in\mathbf X\\x\neq y}}\rho(x,y)\right\}\geq1>0 $ and\\ $ C_\rho({\rho_{_{\min}}})=\inf\limits_{\substack{x,y\in\mathbf X\\x\neq y}}\frac{\rho_{_{\min}}(x,y)}{\rho(x,y)}=\min\left\{1,\inf\limits_{\substack{x,y\in\mathbf X\\x\neq y}}\frac{1}{\rho(x,y)}\right\}=\min\big\{1,\frac{1}{M}\big\}>0 $,\\ where $ M:=\sup\{\rho(x,y):x,y\in\mathbf X,x\neq y\}>0,\ \rho $ being bounded.\\		
	Then by Theorem \ref{t:Cxy}(i), $ \rho_{_{\min}},\rho $ are orderly dependent, as we have already proved in Theorem \ref{t:min}.
\end{proof}

Thus the comparing function measures the comparability of two metrics on a set $ \mathbf X $.

\section{Comparability of norms on a linear space}

We shall explore, in this section, the techniques developed in the previous sections to investigate the comparability of two norms on a linear space. Here comparability of two norms is judged by the concept of orderly dependence; so to explore comparability of norms we need to investigate some basis of $ \norm\smallsetminus\{O\} $ (Example \ref{e:spofnorms}). Equivalence of any two norms on a finite dimensional linear space $ \X $ ensures the existence of a basis in $ \norm\smallsetminus\{O\} $. We shall prove that in an infinite dimensional linear space, a large number of orderly independent norms can be constructed depending on the dimension of the linear space.

\begin{Th}{\em\cite{norm}}
	Consider the topological evs $\mathcal{N}(\X)$ and let $f\,(\neq O)\in\mathcal{N}(\X)$. Then the comparing function relative to $f$ is $C_{f}(g)=\underset{x\in\X\smallsetminus\{\theta\}}{\inf}\frac{g(x)}{f(x)}$, $\forall\,g\in\mathcal{N}(\X)$. Here $\theta$ is the zero element in the vector space $\X$.
\label{t:Cnorm}\end{Th}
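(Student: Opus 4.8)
The plan is to follow the same route used a moment ago for $\mathcal{D}(\mathbf{X})$, exploiting that $\norm$ is a zero primitive, homogeneous topological evs, so that the comparing function $C_f$ is well-defined for $f\neq O$ (the boundedness of $\sigma_f(g)$ being guaranteed by Proposition \ref{p:specbdd}). First I would unwind the definition of the comparing spectrum: since $\norm$ is homogeneous, $\lambda f=|\lambda| f$ for every $\lambda\in\K$, and therefore
\[
\sigma_f(g)=\{\lambda\in\K:\lambda\cdot f\leq g\}=\big\{\lambda\in\K:|\lambda|\,f(x)\leq g(x)\ \text{for all}\ x\in\X\big\}.
\]
Set $s:=\inf_{x\in\X\smallsetminus\{\theta\}} g(x)/f(x)$; this quotient makes sense because $f$ is a genuine norm, so $f(x)>0$ whenever $x\neq\theta$, and $s\in[0,\infty)$ since the infimum is taken over nonnegative numbers and is bounded above by $g(x_0)/f(x_0)$ for any fixed $x_0\neq\theta$. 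The claim to be proved is $C_f(g)=s$.

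For the inequality $C_f(g)\geq s$, I would show $s\in\sigma_f(g)$: by definition of the infimum, $s\leq g(x)/f(x)$, hence $s\,f(x)\leq g(x)$, for every $x\neq\theta$, while at $x=\theta$ both sides vanish; thus $s\cdot f\leq g$, i.e. $s\in\sigma_f(g)$, and consequently $C_f(g)=\sup_{\lambda\in\sigma_f(g)}|\lambda|\geq s$. For the reverse inequality, take an arbitrary $\lambda\in\sigma_f(g)$; then $|\lambda|\,f(x)\leq g(x)$ for all $x$, so dividing by $f(x)>0$ gives $|\lambda|\leq g(x)/f(x)$ for every $x\neq\theta$, whence $|\lambda|\leq s$. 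Taking the supremum over $\lambda\in\sigma_f(g)$ yields $C_f(g)\leq s$, and the two bounds together give $C_f(g)=s$.

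There is essentially no substantive obstacle here; the only points requiring a little care are the degenerate cases — permitting $g=O$ (for which $s=0$ and $\sigma_f(g)=\{0\}$, consistent with the formula) and treating $x=\theta$ separately in the quantifier — together with the explicit appeal to homogeneity of $\norm$ to pass between $\lambda\cdot f$ and $|\lambda|f$. One may also note, as a sanity check, that this is precisely the norm-analogue of the formula $C_d(\rho)=\inf_{x\neq y}\rho(x,y)/d(x,y)$ established for $\mathcal{D}(\X)$, transported along the topological order-isomorphism $\psi$ of Theorem \ref{t:normtopmor}.
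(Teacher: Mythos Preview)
Your proof is correct and complete. Note that the paper itself does not supply a proof of this theorem---it is quoted from \cite{norm}---but your argument is precisely the norm-space analogue of the proof the paper gives for the corresponding result on $\mathcal{D}(\mathbf{X})$, and indeed you observe this yourself in the final paragraph.
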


\begin{Th}
	If $\X$ is a finite dimensional linear space over the field $\K$, then $\norm\smallsetminus\{O\}$ has a basis.
\end{Th}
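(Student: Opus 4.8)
The plan is to exhibit an explicit basis: I claim that a \emph{single} norm already generates $\norm\smallsetminus\{O\}$, so that a one-point set does the job. We may assume $\dim\X=n\geq 1$, the case $\X=\{\theta\}$ being degenerate (then $\norm\smallsetminus\{O\}=\emptyset$, for which $\emptyset$ is a basis). Fix once and for all a norm $f_0$ on $\X$ --- for instance choose a Hamel basis $e_1,\dots,e_n$ and let $f_0$ be the associated Euclidean norm. I will show that $L(f_0)=\norm\smallsetminus\{O\}$, i.e.\ that $B:=\{f_0\}$ is a generator of $\norm\smallsetminus\{O\}$ in the sense of Definition \ref{d:gen}.

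For the inclusion $\norm\smallsetminus\{O\}\subseteq L(f_0)$, take an arbitrary $g\in\norm\smallsetminus\{O\}$. The classical fact that all norms on a finite dimensional space are equivalent supplies a constant $a>0$ with $a\,f_0(x)\leq g(x)$ for every $x\in\X$; concretely, the $f_0$-unit sphere $S:=\{x\in\X:f_0(x)=1\}$ is compact (Heine--Borel in finite dimension), $g$ is Lipschitz --- hence continuous --- with respect to the $f_0$-topology and strictly positive on $S$, so $a:=\inf_{x\in S}g(x)>0$, and homogeneity of norms propagates $a\,f_0\leq g$ to all of $\X$. Hence $g\geq a\cdot f_0+O$ with $a\in\K^{*}$, that is, $g\in L(f_0)$. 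Conversely $O\notin L(f_0)$ since $L(f_0)\cap[\norm]_0=\emptyset$ by Proposition \ref{p:L(x)}(iv) together with $[\norm]_0=\{O\}$; thus $L(f_0)\subseteq\norm\smallsetminus\{O\}$, and equality follows. Finally, $B=\{f_0\}$ has no two distinct members, so it is vacuously orderly independent, and therefore $B$ is a basis of $\norm\smallsetminus\{O\}$ by Definition \ref{d:basis}; in particular $\dim\norm=[1:0]$.

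The only substantive ingredient is the equivalence of norms in finite dimension --- equivalently, the positivity of $\inf_{f_0(x)=1}g(x)$ --- which rests on compactness of the $f_0$-unit sphere; everything else is formal bookkeeping with the $L$-sets. One may also package the same argument through the comparing function already at hand: by Theorem \ref{t:Cnorm}, $C_{f_0}(g)=\inf_{x\in\X\smallsetminus\{\theta\}} g(x)/f_0(x)=\inf_{x\in S}g(x)>0$, whence Theorem \ref{t:Cxy}(iii) yields $g\in L(f_0)$ directly, giving the generator property without re-deriving the homogeneity step by hand.
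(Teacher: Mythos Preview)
Your proof is correct and follows essentially the same approach as the paper: fix any norm $f_0$, invoke the equivalence of all norms on a finite dimensional space to obtain $a\cdot f_0\leq g$ for every $g\in\norm\smallsetminus\{O\}$, and conclude that the singleton $\{f_0\}$ is a (vacuously orderly independent) generator, whence a basis with $\dim\norm=[1:0]$. Your treatment is slightly more explicit---you spell out the compactness argument behind the equivalence and note the comparing-function reformulation via Theorems \ref{t:Cnorm} and \ref{t:Cxy}(iii)---but the underlying idea is identical.
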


\begin{proof}
	Since $[\norm]_{0}=\{O\}$, the primitive space $[\norm]_{0}$ has no basis. Again since any two norms on a finite dimensional linear space are equivalent, for any two norms $ f,g\in\mathcal{N(\X)} $, $ \exists\,\alpha,\beta>0 $ such that $ \alpha f(x)\leq g(x)\leq\beta f(x),\forall\,x\in\X $. So $\alpha f\leq g\leq\beta f $. This shows that $ g\in L(f) $ and $ f\in L(g) $ and hence $ f,g $ are orderly dependent. Moreover, if we fix any norm $ f\in\mathcal{N(\X)} $ then $ g\in L(f),\forall\,g\in\mathcal{N(\X)}\smallsetminus\{O\} $. Thus $ \{f\} $ is a basis of $ \mathcal{N(\X)}\smallsetminus\{O\} $ and hence $ \dim\mathcal{N(\X)}=[1:0] $.
\end{proof}

We now discuss the existence of basis of $ \norm $ whenever $ \X $ is an infinite dimensional linear space.

\begin{Def}{\cite{norm}}
	Two norms on a linear space are said to be \textit{totally non-equivalent} if the topologies induced by them are totally incomparable.\label{d:total}
\end{Def}

\begin{Th}{\em\cite{norm}}
	Two norms $ f, g $ on a linear space $ \X $ will be totally non-equivalent
	iff $ C_f(g)=0=C_g(f) $.
\label{t:total}\end{Th}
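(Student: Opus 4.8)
\emph{Proof proposal.} The plan is to reduce the statement to the classical comparison lemma for norms and then translate that lemma into the language of the comparing function by means of Theorem \ref{t:Cnorm}. Throughout, write $\tau_f$ and $\tau_g$ for the topologies on $\X$ induced by the norms $f$ and $g$.

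First I would record the elementary fact that $\tau_f\subseteq\tau_g$ if and only if there is a constant $c>0$ with $cf(x)\le g(x)$ for every $x\in\X$. The implication ``$\Leftarrow$'' is immediate: in that case each $g$-ball $B_g(x_0,cr)$ lies inside the $f$-ball $B_f(x_0,r)$, so every $\tau_f$-open set is $\tau_g$-open. For ``$\Rightarrow$'', the open $f$-unit ball $B_f(\theta,1)$ is $\tau_f$-open, hence $\tau_g$-open, so it contains some $B_g(\theta,\delta)$ with $\delta>0$; evaluating this inclusion at the vector $\tfrac{\delta}{2g(x)}\,x$ for an arbitrary $x\neq\theta$ and using the homogeneity of the norm yields $\tfrac{\delta}{2}f(x)\le g(x)$, so $c=\delta/2$ works.

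Next I would rephrase this via the comparing function. By Theorem \ref{t:Cnorm}, $C_f(g)=\inf_{x\in\X\smallsetminus\{\theta\}}\tfrac{g(x)}{f(x)}$, and hence $C_f(g)>0$ exactly when some $c>0$ satisfies $g(x)\ge cf(x)$ for all $x$; by the previous paragraph this is precisely the relation $\tau_f\subseteq\tau_g$. Taking contrapositives, $C_f(g)=0\iff\tau_f\not\subseteq\tau_g$, and, interchanging $f$ and $g$, $C_g(f)=0\iff\tau_g\not\subseteq\tau_f$. (One could instead route this through the evs machinery already set up: since $\norm$ is zero primitive and homogeneous, $C_f(g)>0\iff g\in L(f)$ by the Result following Proposition \ref{p:leq}, and $g\in L(f)$ unfolds --- because $[\norm]_0=\{O\}$ --- to ``$g\ge\alpha f$ for some $\alpha>0$'', which is again the comparison condition.)

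Finally, by Definition \ref{d:total} the norms $f$ and $g$ are totally non-equivalent exactly when $\tau_f$ and $\tau_g$ are incomparable, i.e. when $\tau_f\not\subseteq\tau_g$ and $\tau_g\not\subseteq\tau_f$ both hold; combining the two equivalences of the preceding paragraph, this happens if and only if $C_f(g)=0=C_g(f)$, which is the claim. I do not expect a genuine obstacle here: the only substantive ingredient is the comparison lemma in the first step, and the only points requiring care are matching the direction of the norm inequality to the direction of topological refinement, and being precise that ``incomparable'' means neither topology refines the other, so that the condition decouples cleanly into the two separately handled non-inclusions.
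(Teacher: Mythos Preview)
Your argument is correct: the comparison lemma for norm topologies combined with Theorem~\ref{t:Cnorm} yields exactly the equivalence $\tau_f\subseteq\tau_g\iff C_f(g)>0$, and taking contrapositives in both directions recovers the statement. Note, however, that the present paper does not prove this theorem at all --- it is quoted verbatim from \cite{norm} and stated without proof --- so there is no ``paper's own proof'' to compare against. Your write-up supplies precisely the standard argument one would expect the original reference to contain.
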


Thus in view of Theorem \ref{t:total} and Theorem \ref{t:Cxy}(ii) we have the following result.

\begin{Res}
	Two norms on a linear space are orderly independent iff they are totally non-equivalent.\label{r:ordindpnorm}
\end{Res}

\begin{Cor}
	Let $\X$ be an infinite dimensional linear space over the field $\K$. If $B$ is a basis of $\norm\smallsetminus\{O\}$, then $B$ contains only totally non-equivalent norms.
\end{Cor}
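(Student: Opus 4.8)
The plan is to read the conclusion off directly from the definition of a basis together with Result~\ref{r:ordindpnorm}. Recall that by Definition~\ref{d:basis} a basis $B$ of $\norm\smallsetminus\{O\}$ is, among other things, an \emph{orderly independent} subset of $\norm\smallsetminus\{O\}$; that is, for any two distinct $f,g\in B$ we have neither $f\in L(g)$ nor $g\in L(f)$. This orderly-independence property is the only feature of $B$ that the corollary needs — the generating property plays no role here.

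First I would fix two arbitrary distinct norms $f,g\in B$. Since $B$ is orderly independent, $f$ and $g$ are orderly independent elements of the topological evs $\norm$. Now I apply Result~\ref{r:ordindpnorm}, which asserts that two norms on a linear space are orderly independent precisely when they are totally non-equivalent; hence $f$ and $g$ are totally non-equivalent, i.e. the norm topologies they induce on $\X$ are totally incomparable in the sense of Definition~\ref{d:total}. Since $f,g$ were an arbitrary pair of distinct members of $B$, every such pair consists of totally non-equivalent norms, which is exactly the assertion. (If one prefers to avoid quoting Result~\ref{r:ordindpnorm} directly, the same two lines go through via Theorem~\ref{t:Cxy}(ii), giving $C_f(g)=0=C_g(f)$, followed by Theorem~\ref{t:total}.)

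There is no genuine obstacle: the substantive work has already been done upstream in establishing Result~\ref{r:ordindpnorm}, which rests on the comparing-function criterion for total non-equivalence (Theorem~\ref{t:total}) and the comparing-function criterion for orderly independence (Theorem~\ref{t:Cxy}(ii)). Since the statement is conditional on $B$ being a basis, no existence question needs to be addressed, and the infinite-dimensionality hypothesis serves only to make the statement non-vacuous: when $\X$ is finite dimensional all norms are equivalent, hence mutually orderly dependent, so every basis of $\norm\smallsetminus\{O\}$ is a singleton and there are no two distinct members to compare. Thus the proof is essentially a one-line deduction once the above tools are invoked.
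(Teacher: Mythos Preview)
Your proposal is correct and matches the paper's intent: the Corollary is stated there without proof, immediately following Result~\ref{r:ordindpnorm}, as a direct consequence of combining the orderly independence built into Definition~\ref{d:basis} with that result. Your write-up simply spells out this one-line deduction, and your remark on the role of the infinite-dimensionality hypothesis is apt.
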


We now discuss how many orderly independent norms can be defined on an infinite dimensional linear space. For this we need the following Lemma.

\begin{Lem}
	Let $ A $ be an infinite set. Then $ \exists\,B\subsetneqq A $ and disjoint countable subsets $ D_b,\,E_b $ of $ A $, for each $ b\in B $ such that $ |B|=|A| $ and $ A=B\displaystyle\sqcup\bigsqcup_{b\in B}(D_b\sqcup E_b) $. \big[Here $ |A| $ denotes the cardinality of $ A $ and $ \sqcup $ denotes disjoint union of sets.\big]
\label{l:card}\end{Lem}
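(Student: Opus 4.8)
The plan is to reduce the whole statement to one fact of cardinal arithmetic and then do bookkeeping. Since $A$ is infinite, a standard consequence of the axiom of choice gives $|A\times\N|=|A|$ (indeed $|A|\le|A\times\N|\le|A\times A|=|A|$, the last equality being the familiar fact that $|S\times S|=|S|$ for infinite $S$). So I would first fix a bijection $\varphi\colon A\to A\times\N$. For each $a\in A$, call $C_a:=\varphi^{-1}\big(\{a\}\times\N\big)$ the \emph{$a$-th column}; then each $C_a$ is countably infinite and $\{C_a:a\in A\}$ is a partition of $A$ into $|A|$ many pieces.

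Next I would split each column in three. Writing $\N=\{0,1,2,\dots\}$, for each $a\in A$ set $b_a:=\varphi^{-1}(a,0)$, $D_{b_a}:=\{\varphi^{-1}(a,2k+1):k\in\N\}$ and $E_{b_a}:=\{\varphi^{-1}(a,2k+2):k\in\N\}$. Because $\varphi$ is a bijection, $D_{b_a}$ and $E_{b_a}$ are disjoint countably infinite subsets of $A$ and $C_a=\{b_a\}\sqcup D_{b_a}\sqcup E_{b_a}$. Put $B:=\{b_a:a\in A\}$. The assignment $a\mapsto b_a=\varphi^{-1}(a,0)$ is injective, hence a bijection of $A$ onto $B$, so $|B|=|A|$; and $B\subsetneqq A$ since $B$ meets each column $C_a$ only in $b_a$, whence every (non-empty) set $D_{b_a}$ is contained in $A\smallsetminus B$.

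Finally I would reassemble the partition: as the columns partition $A$,
\[
A=\bigsqcup_{a\in A}C_a=\bigsqcup_{a\in A}\big(\{b_a\}\sqcup D_{b_a}\sqcup E_{b_a}\big)=B\sqcup\bigsqcup_{b\in B}\big(D_b\sqcup E_b\big),
\]
the last equality being just a re-indexing of the union along the bijection $a\mapsto b_a$. This yields all the asserted properties. I do not expect a genuine obstacle: the only substantive ingredient is the equality $|A\times\N|=|A|$, and everything afterwards is routine; the single point to verify carefully is that $B$ and the sets $D_b,E_b$ $(b\in B)$ are pairwise disjoint, which is immediate from disjointness of the columns together with the $0$/odd/even split inside each column. (If ``countable'' in the statement is intended to permit finite sets as well, this construction still applies verbatim and in fact delivers countably infinite $D_b,E_b$.)
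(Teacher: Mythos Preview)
Your proof is correct and follows essentially the same approach as the paper: both reduce the claim to the cardinal-arithmetic identity $|A|=|A\times\N|$ (the paper does this in two steps, first $|A|=|A\times\{0,1\}|$ to split $A=B\sqcup C$ and then $|C|=|B\times\N|$), and both obtain the countable sets $D_b,E_b$ via an even/odd split of the $\N$-factor. Your single-bijection version is a slight streamlining of the paper's two-bijection argument, but the underlying idea is the same.
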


\begin{proof}
	$ A $ being infinite, $ |A|=|A\times\{0,1\}|=|(A\times\{0\})\sqcup(A\times\{1\})| $.  So there is a bijection $ \varphi:A\to(A\times\{0\})\sqcup(A\times\{1\}) $. Put $ B:=\varphi^{-1}(A\times\{0\})  $ and $ C:=\varphi^{-1}(A\times\{1\}) $. Then $ A=B\sqcup C $ and $ |B|=|\varphi^{-1}(A\times\{0\})|=|A\times\{0\}|=|A|=|A\times\{1\}|=|\varphi^{-1}(A\times\{1\})|=|C| $.
	
	Now $ |C|=|B|=|B\times\N|,\ B$ being infinite. So there is a bijection $ \psi:C\to B\times\N $. Again $B\times\N=\displaystyle\bigsqcup_{b\in B}\Big(\big(\{b\}\times2\N\big)\sqcup\big(\{b\}\times(2\N+1)\big)\Big) $, where $ 2\N:=\{2n:n\in\N\} $ and $ 2\N+1:=\{2n+1:n\in\N\} $. Put $ D_b:=\psi^{-1}\big(\{b\}\times2\N\big) $ and $ E_b:=\psi^{-1}\big(\{b\}\times(2\N+1)\big) $ for each $ b\in B $. So $ C=\displaystyle\bigsqcup_{b\in B}(D_b\sqcup E_b) $. Therefore $ A=B\displaystyle\sqcup\bigsqcup_{b\in B}(D_b\sqcup E_b) $.
\end{proof}

\begin{Th}
	Let $\X$ be a linear space over the field $\K$ with $ \dim\X=\aleph $ (an infinite cardinal). Then there exists at least $2^\aleph\cdot\mathfrak{c}$ many pairs of orderly independent norms on $ \X $, $\mathfrak{c}$ being the cardinality of $ \R$.\label{t:pair}\end{Th}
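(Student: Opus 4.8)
The plan is to produce a family of $2^{\aleph}\cdot\mathfrak c$ pairwise orderly independent norms on $\X$, realised as \emph{weighted $\ell^1$-norms} with respect to a Hamel basis, and then to count the pairs they determine. Fix a Hamel basis $\{u_a:a\in A\}$ of $\X$ with $|A|=\aleph$. For a weight function $w\colon A\to(0,\infty)$ define $N_w\colon\X\to[0,\infty)$ by $N_w\big(\sum_{a}c_au_a\big):=\sum_{a}|c_a|\,w(a)$ (only finite sums occur). First I would record two routine facts: (a) $N_w\in\mathcal N(\X)$ for every such $w$, where positive-definiteness uses $w(a)>0$ and no upper bound on $w$ is needed; (b) by Theorem \ref{t:Cnorm}, $C_{N_w}(N_v)=\displaystyle\inf_{a\in A}\frac{v(a)}{w(a)}$ for any two weights $w,v$ --- the inequality ``$\le$'' by evaluating at $u_a$, and ``$\ge$'' because $N_v(x)/N_w(x)$ is a convex combination of the numbers $v(a)/w(a)$ over the finite support of $x$, hence $\ge\inf_a v(a)/w(a)$. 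Combining (b) with Theorem \ref{t:Cxy}(ii) (equivalently Result \ref{r:ordindpnorm}) gives the working criterion: $N_w$ and $N_v$ are orderly independent iff $\inf_{a\in A}v(a)/w(a)=0=\inf_{a\in A}w(a)/v(a)$, i.e.\ iff along $A$ each of the ratios $v/w$ and $w/v$ takes values arbitrarily close to $0$.

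Next I would invoke Lemma \ref{l:card} for $A$: write $A=B\sqcup\bigsqcup_{b\in B}(D_b\sqcup E_b)$ with $|B|=\aleph$, and enumerate $D_b=\{d_{b,n}\}_{n\in\N}$, $E_b=\{e_{b,n}\}_{n\in\N}$. Fix one element $b^{\ast}\in B$ and set $B':=B\setminus\{b^{\ast}\}$, so $|B'|=\aleph$. To each pair $(S,t)$ with $S\subseteq B'$ and $t\in(1,\infty)$ attach the weight $w_{S,t}\colon A\to(0,\infty)$ given by: $w_{S,t}\equiv1$ on $B$ and on $D_b\cup E_b$ for every $b\in B'\setminus S$; on the block of $b^{\ast}$, $w_{S,t}(d_{b^{\ast},n}):=t^{\,n}$ and $w_{S,t}(e_{b^{\ast},n}):=t^{-n}$; and for $b\in S$, $w_{S,t}(d_{b,n}):=n+1$ and $w_{S,t}(e_{b,n}):=1/(n+1)$. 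Distinct pairs $(S,t)$ yield distinct weight functions (a change in $t$ is visible on $D_{b^{\ast}}$, a change in $S$ on some $D_{b_0}$ with $b_0\in S\triangle S'$), hence distinct norms $N_{S,t}:=N_{w_{S,t}}$; there are $2^{\aleph}\cdot\mathfrak c$ of them.

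Then I would verify that $N_{S,t}$ and $N_{S',t'}$ are orderly independent whenever $(S,t)\ne(S',t')$, using the criterion above. If $S\ne S'$, pick $b_0\in S\triangle S'$, say $b_0\in S'\setminus S$; then $w_{S,t}\equiv1$ on $D_{b_0}\cup E_{b_0}$ while $w_{S',t'}(d_{b_0,n})=n+1$ and $w_{S',t'}(e_{b_0,n})=1/(n+1)$, so $w_{S,t}(d_{b_0,n})/w_{S',t'}(d_{b_0,n})\to0$ and $w_{S',t'}(e_{b_0,n})/w_{S,t}(e_{b_0,n})\to0$, giving $C_{N_{S',t'}}(N_{S,t})=0=C_{N_{S,t}}(N_{S',t'})$. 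If $S=S'$ but $t\ne t'$ (say $t>t'$), then $w_{S,t}$ and $w_{S',t'}$ differ only on $b^{\ast}$'s block, where $w_{S,t'}(d_{b^{\ast},n})/w_{S,t}(d_{b^{\ast},n})=(t'/t)^{n}\to0$ and $w_{S,t}(e_{b^{\ast},n})/w_{S,t'}(e_{b^{\ast},n})=(t'/t)^{n}\to0$, again forcing both comparing functions to $0$. In either case Theorem \ref{t:Cxy}(ii) yields orderly independence. Hence $\{N_{S,t}:S\subseteq B',\,t>1\}$ is a pairwise orderly independent family of cardinality $2^{\aleph}\cdot\mathfrak c$, so it contains $\binom{2^{\aleph}\cdot\mathfrak c}{2}=2^{\aleph}\cdot\mathfrak c$ orderly independent pairs, which proves the theorem. (Since $\aleph\ge\aleph_0$ one in fact has $2^{\aleph}\cdot\mathfrak c=2^{\aleph}$; the two-parameter construction merely exhibits the factor $\mathfrak c$ transparently.)

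The step needing the most care is the design of the weights in the second paragraph together with its verification in the third: the weights must be arranged so that, for \emph{every} pair of distinct indices, a \emph{single} countable block $D_{b_0}\sqcup E_{b_0}$ simultaneously witnesses ``$v/w\to0$'' on its $D$-half and ``$w/v\to0$'' on its $E$-half --- this is exactly what forces \emph{both} comparing functions to vanish, and it is made possible by the reservoir of pairwise disjoint countable blocks indexed by a set of the full cardinality $\aleph$ supplied by Lemma \ref{l:card}. By contrast, the norm axioms for $N_w$ and the identity $C_{N_w}(N_v)=\inf_a v(a)/w(a)$ are straightforward, the latter resting only on the fact that a convex combination of the ratios $v(a)/w(a)$ is at least their infimum.
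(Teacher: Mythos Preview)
Your proof is correct and follows essentially the same route as the paper's: both construct weighted Hamel-basis norms parametrized by (subset of $B$, real $>1$), invoke the decomposition of Lemma~\ref{l:card}, and verify orderly independence by driving the weight-ratios to $0$ along the $D$- and $E$-sequences of a suitable block. The only cosmetic differences are that the paper uses the $\ell^\infty$-weighted norm $\|x\|_w=\max_{h} w(h)|\lambda_h|$ rather than your $\ell^1$-version, and applies its real parameter $\gamma$ uniformly to \emph{all} blocks indexed by the chosen subset $C$ instead of reserving a single special block $b^{\ast}$ for the parameter $t$ as you do.
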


\begin{proof}
	Let $H$ be a Hamel basis of $\X$. Then each $x\in\X$ has a unique representation $x=\displaystyle\sum_{h\in H}\lambda_{h}h$, where $\lambda_{h}\in\K$ and $\lambda_{h}=0$ except for finitely many $h$'s. 
	
	For any function $w: H \rightarrow(0, \infty)$, we can define another function $\|\cdot\|_w:\X\longrightarrow[0,\infty)$ by $\|x\|_{w}:=\displaystyle\max _{h \in H} w(h)\left|\lambda_{h}\right|$, if $ x $ has the representation as above. Clearly, $\|\cdot\|_w$ is a norm on $\X$.
	
	Since $H$ is infinite, we may write by Lemma \ref{l:card},	
	$ H=B\displaystyle\sqcup\bigsqcup_{t\in B}(D_t\sqcup E_t) $, where $ B\subsetneqq H $ with $ |B|=|H| $ and $ D_t,\,E_t $ are disjoint countable subsets of $ H $ with $D_{t}=\left\{d_{t, i} : i \in \N\right\},$  $E_{t}=\left\{e_{t,i} : i\in \N\right\}$, for each $ t\in B $.
	
	Let $\emptyset\subsetneqq C\subsetneqq B$ and $\gamma>1$. We now define a function
	$w_{_{C,\gamma}}: H \rightarrow(0, \infty)$ by
	
	$w_{_{C,\gamma}}(x):=\begin{cases}1, & x \in  B\\ \gamma^{i}, & x= d_{t, i}, \text{ where }t\in C \\
		1, & x=d_{t, i},\text{ where }t\not \in C\\ \gamma^{-i}, & x=e_{t, i},\text{ where }t \in C\\
		1, & x=e_{t, i},\text{ where }t\not \in C\end{cases}$\\	
	Then by above discussion, it follows that $\|\cdot\|_{C,\gamma}:=\|\cdot\|_{w_{_{C,\gamma}}}$ is a norm on $\X$. We now show that for $ (C,\gamma)\neq(C',\gamma') $ \Big[$\emptyset\subsetneqq C,C'\subsetneqq B$ and $\gamma,\gamma'>1$\Big] the norms $ \|\cdot\|_{C,\gamma}\text{ and }\|\cdot\|_{C',\gamma'} $ are orderly independent.
	
\noindent	\textbf{Case 1:} Let $C\not=C'$. Without loss of generality, we assume that $C \smallsetminus C' \neq \emptyset$. Let $ t\in C\smallsetminus C' $. 	
Then by Theorem \ref{t:Cnorm}, $ C_{_{\|\cdot\|_{C',\gamma'}}}(\|\cdot\|_{C,\gamma})=\displaystyle\inf_{x\in\X\smallsetminus\{\theta\}}\tfrac{\|x\|_{C,\gamma}}{\|x\|_{C',\gamma'}}\leq\lim _{i \rightarrow \infty} \tfrac{\left\|e_{t, i}\right\|_{C, \gamma}}{\left\|e_{t, i}\right\|_{C', \gamma'}}=\lim_{i\to\infty} \tfrac{\gamma^{-i}}{1}=0$ $(\because\gamma>1) $ and $ C_{_{\|\cdot\|_{C,\gamma}}}(\|\cdot\|_{C',\gamma'})=\displaystyle\inf_{x\in\X\smallsetminus\{\theta\}}\tfrac{\|x\|_{C',\gamma'}}{\|x\|_{C,\gamma}}\leq\lim _{i \rightarrow \infty} \tfrac{\left\|d_{t, i}\right\|_{C', \gamma'}}{\left\|d_{t, i}\right\|_{C, \gamma}}=\lim_{i\to\infty} \tfrac{1}{\gamma^i}=0\,(\because\gamma>1) $. So by Theorem \ref{t:Cxy}(ii), $ \|\cdot\|_{C,\gamma}\text{ and }\|\cdot\|_{C',\gamma'} $ are orderly independent.	
	
\noindent	\textbf{Case 2:} Let $C=C'$. Then $\gamma \neq \gamma'$.	Without loss of generality, we assume that $\gamma>\gamma'$. Then for any $ t\in C=C' $, $ C_{_{\|\cdot\|_{C',\gamma'}}}(\|\cdot\|_{C,\gamma})=\displaystyle\inf_{x\in\X\smallsetminus\{\theta\}}\tfrac{\|x\|_{C,\gamma}}{\|x\|_{C',\gamma'}}\leq\lim _{i \rightarrow \infty} \tfrac{\left\|e_{t, i}\right\|_{C, \gamma}}{\left\|e_{t, i}\right\|_{C', \gamma'}}=\lim_{i\to\infty} \tfrac{\gamma^{-i}}{(\gamma')^{-i}}=0$ $(\because\gamma>\gamma') $ and $ C_{_{\|\cdot\|_{C,\gamma}}}(\|\cdot\|_{C',\gamma'})=\displaystyle\inf_{x\in\X\smallsetminus\{\theta\}}\tfrac{\|x\|_{C',\gamma'}}{\|x\|_{C,\gamma}}\leq\lim _{i \rightarrow \infty} \tfrac{\left\|d_{t, i}\right\|_{C', \gamma'}}{\left\|d_{t, i}\right\|_{C, \gamma}}=\lim_{i\to\infty} \tfrac{(\gamma')^i}{\gamma^i}=0\,(\because\gamma>\gamma') $. So by Theorem \ref{t:Cxy}(ii), $ \|\cdot\|_{C,\gamma}\text{ and }\|\cdot\|_{C',\gamma'} $ are orderly independent.	
	
Thus $ \mathscr{A}:=\big\{\|\cdot\|_{C,\gamma}:\emptyset\subsetneqq C\subsetneqq B,\gamma>1\big\} $ is a collection of orderly independent norms on $ \X $. Now the cardinality of this set is $ 2^\aleph\cdot
\mathfrak{c} $, if $ |B|=|H|=\dim\X=\aleph $ (an infinite cardinal) and $ |\R|=\mathfrak{c} $.
\end{proof}

\begin{Cor}
	If $ \X $ is an infinite dimensional linear space over $ \K $ of dimension $ \aleph $ then dimension of the evs $ \mathcal{N}(\X) $ is $ [\,\widehat{\aleph}:0\,] $, where $ \widehat{\aleph} $ is at least $ 2^\aleph\cdot\mathfrak{c} $, provided $ Q\big(\mathcal{N(\X)}\big) $ generates $ \mathcal{N}(\X)\smallsetminus\{O\} $.
\end{Cor}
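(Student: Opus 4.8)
The plan is to dispose quickly of the two easy ingredients and then concentrate on the lower bound for $\widehat{\aleph}$. By Example \ref{e:spofnorms}, $[\mathcal{N}(\X)]_{0}=\{O\}$, so the primitive space has no basis and contributes $0$ to the dimension; thus it remains only to identify $\dim\big(\mathcal{N}(\X)\smallsetminus\{O\}\big)$. Under the proviso that $Q\big(\mathcal{N}(\X)\big)$ generates $\mathcal{N}(\X)\smallsetminus\{O\}$, Theorem \ref{t:Q(X)generator} guarantees that $\mathcal{N}(\X)\smallsetminus\{O\}$ possesses a basis, and since any two bases of $X\smallsetminus X_{0}$ are equinumerous, $\dim\big(\mathcal{N}(\X)\smallsetminus\{O\}\big)$ is a well-defined cardinal $\widehat{\aleph}$, giving $\dim\mathcal{N}(\X)=[\widehat{\aleph}:0]$. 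Hence the whole content is the inequality $\widehat{\aleph}\geq 2^{\aleph}\cdot\mathfrak{c}$.

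For this I would use the family produced in Theorem \ref{t:pair}. With $H=B\sqcup\bigsqcup_{t\in B}(D_{t}\sqcup E_{t})$ as in Lemma \ref{l:card}, the collection $\mathscr{A}:=\big\{\|\cdot\|_{C,\gamma}:\emptyset\subsetneqq C\subsetneqq B,\ \gamma>1\big\}$ consists of $2^{\aleph}\cdot\mathfrak{c}$ pairwise orderly independent norms; since $\mathcal{N}(\X)$ is zero primitive and homogeneous, Theorem \ref{t:Cxy}(v) tells us that the testing sets $L\big(\|\cdot\|_{C,\gamma}\big)$ are pairwise incomparable, in particular pairwise distinct. Now fix any basis $B_{0}$ of $\mathcal{N}(\X)\smallsetminus\{O\}$; because $B_{0}$ generates, for every $f\in\mathscr{A}$ we may choose $b(f)\in B_{0}$ with $f\in L\big(b(f)\big)$, equivalently $L(f)\subseteq L\big(b(f)\big)$ by Theorem \ref{t:Cxy}(iii). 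The goal is to prove that $f\mapsto b(f)$ is injective, which at once yields $\widehat{\aleph}=|B_{0}|\geq|\mathscr{A}|=2^{\aleph}\cdot\mathfrak{c}$ and finishes the proof.

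To establish injectivity, suppose $b(f_{1})=b(f_{2})=:b$ for distinct $f_{1},f_{2}\in\mathscr{A}$, say $f_{i}=\|\cdot\|_{C_{i},\gamma_{i}}$. Then $L(f_{1})\subseteq L(b)\supseteq L(f_{2})$, while $L(f_{1})$ and $L(f_{2})$ are incomparable. Being a basis element, $b$ lies in $Q\big(\mathcal{N}(\X)\big)$ (Remark \ref{rm:basefeas}), so by Lemma \ref{l:Q(X)} every norm $\leq b$ is equivalent to $b$; moreover $f_{i}\geq\alpha_{i}b$ for suitable $\alpha_{i}>0$, hence $b\leq\alpha_{i}^{-1}f_{i}$. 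The idea is to feed the explicit weights $w_{_{C_{i},\gamma_{i}}}$ into the comparing-function formula $C_{b}(g)=\inf_{x\neq\theta}g(x)/b(x)$ of Theorem \ref{t:Cnorm} and to test it along the countable blocks $(d_{t,i})_{i}$ and $(e_{t,i})_{i}$ attached to a common index $t$: along these sequences the quotients $\|\cdot\|_{C_{1},\gamma_{1}}/b$ and $\|\cdot\|_{C_{2},\gamma_{2}}/b$ are forced, through the constraint $b\leq\alpha_{i}^{-1}f_{i}$ and the $Q$-minimality of $b$, into a configuration in which one of $L(f_{1})\subseteq L(f_{2})$ or $L(f_{2})\subseteq L(f_{1})$ has to hold --- contradicting incomparability. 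This contradiction makes $b(\cdot)$ injective, whence $\widehat{\aleph}\geq 2^{\aleph}\cdot\mathfrak{c}$ and $\dim\mathcal{N}(\X)=[\widehat{\aleph}:0]$ with $\widehat{\aleph}\geq 2^{\aleph}\cdot\mathfrak{c}$.

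The hard part will be exactly this last step. The relation $f_{i}\in L(b)$ only says that the topology induced by $f_{i}$ refines that induced by $b$, which by itself does not compare the two $f_{i}$-topologies; indeed a single testing set can perfectly well contain orderly independent elements, so the argument must genuinely use the rigidity of a \emph{minimal} generator element (its membership in $Q$ and Lemma \ref{l:Q(X)}) together with the very specific geometry of the norms $\|\cdot\|_{C,\gamma}$ --- the disjoint countable blocks $D_{t}\sqcup E_{t}$ of Lemma \ref{l:card} and the geometric weights $\gamma^{\pm i}$. If this direct route turns out to be too stubborn, an alternative I would pursue is to run a variant of the construction in Theorem \ref{t:pair} yielding $2^{\aleph}\cdot\mathfrak{c}$ pairwise orderly independent norms that already belong to $Q\big(\mathcal{N}(\X)\big)$, and then to enlarge this family by Zorn's lemma to a maximal orderly independent subset of $Q\big(\mathcal{N}(\X)\big)$, which by Theorem \ref{t:maxbasis} is a basis of cardinality at least $2^{\aleph}\cdot\mathfrak{c}$.
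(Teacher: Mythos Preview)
The paper's own proof is a single sentence: it simply cites Theorem~\ref{t:pair} and Theorem~\ref{t:maxbasis}, together with $[\mathcal{N}(\X)]_{0}=\{O\}$. In other words, the intended argument is exactly the ``alternative'' you sketch in your last paragraph: take the $2^{\aleph}\cdot\mathfrak{c}$ pairwise orderly independent norms produced in Theorem~\ref{t:pair}, view them as an orderly independent subset, extend by Zorn to a maximal orderly independent subset of $Q\big(\mathcal{N}(\X)\big)$, and invoke Theorem~\ref{t:maxbasis} to conclude that this maximal set is a basis of cardinality at least $2^{\aleph}\cdot\mathfrak{c}$. You should lead with that and drop the rest; the paper treats the corollary as immediate. (Strictly speaking, this route requires the family $\mathscr{A}$ from Theorem~\ref{t:pair} to lie inside $Q\big(\mathcal{N}(\X)\big)$ before one can Zorn-extend \emph{within} $Q$; the paper does not verify this either, so your brief acknowledgement of this point is fair.)

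Your primary approach --- fixing a basis $B_{0}$ and trying to show the assignment $f\mapsto b(f)$ is injective --- has a genuine gap at exactly the step you flag as ``hard''. Two orderly independent elements can perfectly well sit inside one testing set $L(b)$: from $f_{1},f_{2}\in L(b)$ you only get $\alpha_{1}b\leq f_{1}$ and $\alpha_{2}b\leq f_{2}$, which gives no comparison between $f_{1}$ and $f_{2}$ at all. The fact that $b\in Q\big(\mathcal{N}(\X)\big)$ (Remark~\ref{rm:basefeas}, Lemma~\ref{l:Q(X)}) controls only elements \emph{below} $b$, not elements of $L(b)$, which lie \emph{above} multiples of $b$; so your appeal to ``$Q$-minimality of $b$'' has no traction here. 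The heuristic about testing along the blocks $(d_{t,i})_{i}$ and $(e_{t,i})_{i}$ does not lead anywhere, because knowing $b\leq\alpha_{i}^{-1}f_{i}$ on those sequences tells you nothing about the relative sizes of $f_{1}$ and $f_{2}$. In short, injectivity of $f\mapsto b(f)$ is not provable from the ingredients you list, and this line should be abandoned in favour of the paper's direct use of Theorem~\ref{t:maxbasis}.
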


\begin{proof}
	Follows immediately from Theorems \ref{t:pair} and \ref{t:maxbasis}, considering the fact that $ [\mathcal{N(\X)}]_0=\{O\} $.
\end{proof}

\begin{Cor}
	If $ \X $ is an infinite dimensional linear space over $ \K $ of dimension $ \aleph $ then there are at least $ 2^\aleph\cdot\mathfrak{c} $ many totally non-equivalent norms on $ \X $.
\end{Cor}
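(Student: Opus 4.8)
The plan is to read off this corollary directly from Theorem \ref{t:pair} together with Result \ref{r:ordindpnorm}. Recall that in the proof of Theorem \ref{t:pair} one does not merely produce $2^\aleph\cdot\mathfrak{c}$ unordered pairs of orderly independent norms; rather one exhibits a single family $\mathscr{A}=\big\{\|\cdot\|_{C,\gamma}:\emptyset\subsetneqq C\subsetneqq B,\ \gamma>1\big\}$, indexed by the parameters $(C,\gamma)$, with the property that any two distinct members are orderly independent. So $\mathscr{A}$ is itself a pairwise orderly independent set of norms, and its cardinality is $2^\aleph\cdot\mathfrak{c}$, using $|B|=|H|=\dim\X=\aleph$ and $|\R|=\mathfrak{c}$ as in the closing lines of that proof.

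First I would invoke Result \ref{r:ordindpnorm}, which asserts that two norms on a linear space are orderly independent if and only if they are totally non-equivalent (Definition \ref{d:total}). Applying this to each pair of distinct elements of $\mathscr{A}$, the family $\mathscr{A}$ becomes a collection of pairwise totally non-equivalent norms. Hence $\X$ carries at least $|\mathscr{A}|=2^\aleph\cdot\mathfrak{c}$ many totally non-equivalent norms, which is exactly the assertion of the corollary.

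The proof involves no real obstacle: all the substantive work is already contained in Theorem \ref{t:pair} (the construction of $\mathscr{A}$ from a Hamel basis via the decomposition furnished by Lemma \ref{l:card}) and in the characterisation recorded in Result \ref{r:ordindpnorm} (which in turn rests on Theorem \ref{t:total} and Theorem \ref{t:Cxy}(ii)). The only point worth stating explicitly is that ``totally non-equivalent'', like orderly independence, is a genuinely pairwise relation, so that a pairwise orderly independent family of norms translates into a family of the same cardinality any two of whose members are totally non-equivalent. Thus the corollary is merely the numerical restatement, in the language of Definition \ref{d:total}, of the conclusion of Theorem \ref{t:pair}.
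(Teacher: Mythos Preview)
Your proposal is correct and follows exactly the paper's approach: the paper's own proof simply reads ``Immediately follows from Theorem \ref{t:pair} and Result \ref{r:ordindpnorm}.'' Your additional remark that Theorem \ref{t:pair} actually produces a single pairwise orderly independent family $\mathscr{A}$ (rather than merely many pairs) is a helpful clarification of why the cardinality conclusion transfers, but it does not differ in substance from the paper's intent.
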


\begin{proof}
	Immediately follows from Theorem \ref{t:pair} and Result \ref{r:ordindpnorm}.
\end{proof}

\noindent\textbf{Acknowledgment :} The first author is thankful to University Grants Commission, India
	for financial assistance.

\end{document}